\documentclass{article}
\usepackage{arxiv}

\usepackage[utf8]{inputenc}
\usepackage[T1]{fontenc}
\usepackage{hyperref}
\hypersetup{
  colorlinks=true,
  linkcolor=blue,
  citecolor=blue,
  urlcolor=blue,
  pdftitle={Noncommutative Anisotropic Diffusion in Hilbert Space. I. The Consistent A-Geometry, Mosco Stability, and the Weak Bridge},
  pdfauthor={E. Yu. Shchetinin and A. A. Shevchuk and S. I. Salpagarov}
}
\usepackage{url}
\usepackage{amsmath,amssymb,amsthm,amsfonts}
\usepackage{nicefrac}
\usepackage{microtype}
\usepackage{graphicx}

\newtheorem{theorem}{Theorem}[section]
\newtheorem{lemma}[theorem]{Lemma}
\newtheorem{proposition}[theorem]{Proposition}
\newtheorem{corollary}[theorem]{Corollary}
\newtheorem{assumption}[theorem]{Assumption}
\newtheorem{definition}[theorem]{Definition}
\newtheorem{remark}[theorem]{Remark}

\newcommand{\Hh}{\mathcal H}
\newcommand{\LL}{\mathcal L}
\newcommand{\LLone}{\mathcal L_1}
\newcommand{\LLtwo}{\mathcal L_2}
\newcommand{\E}{\mathbb E}

\newcommand{\Law}{\operatorname{Law}}
\newcommand{\Tr}{\operatorname{Tr}}
\newcommand{\Ent}{\mathrm H}
\newcommand{\Eval}{\mathcal L_{\mathrm{val}}}

\newcommand{\Asf}{\mathsf A}

\newcommand{\Ran}{\operatorname{Ran}}
\newcommand{\Dom}{\operatorname{Dom}}
\newcommand{\FC}{\mathcal{FC}_b^\infty}

\newcommand{\subjclass}[2][2020]{%
  \par\addvspace\medskipamount
  {\small\textit{2020 Mathematics Subject Classification.} #2\par}}

\title{Noncommutative Anisotropic Diffusion in Hilbert Space.\\
       I. The Consistent \(A\)-Geometry, Mosco Stability, and the Weak Bridge}

\author{%
  E.\,Yu.~Shchetinin \\
  Sevastopol State University \\
  Sevastopol, Russia \\
  \texttt{riviera-molto@mail.ru}
  \And
  A.\,A.~Shevchuk \\
  Sevastopol State University \\
  Sevastopol, Russia \\
  \texttt{andreiluck11@yandex.ru}
  \AND
  S.\,I.~Salpagarov \\
  RUDN University \\
  Moscow, Russia \\
  \texttt{salpagarov-si@rudn.ru}
}

\begin{document}
\maketitle
\nocite{DaPratoZabczyk,PrevotRockner,BogachevGaussian,Kuo,BogachevKrylovRockner,AmbrosioSavareZambotti,MaRockner,FukushimaOshimaTakeda,Mosco,KuwaeShioya,BakryGentilLedoux,HolleyStroock,Ledoux,OttoVillani,Villani,BoucheronLugosiMassart,Wainwright,Hyvarinen,Vincent,Song,Allaire,CioranescuDonato,Weickert,LimYoon2023,Pidstrigach2024,FranzeseMichiardi2025,Hagemann2025,DeBortoli2022,DeWitt2020,AnthonyBartlett,vanDerVaartWellner}

\begin{abstract}
This first part of the series builds the analytic layer of noncommutative
anisotropic diffusion in a separable Hilbert space. Let
\(\mu_0=\mathcal N(0,Q)\) be the reference Gaussian measure, with
\(Q\in\LLone(\Hh)\), and let \(D(x)\) be a positive, state-dependent
anisotropy. We do not assume that \([D(x),Q]=0\). Consequently, for the
forward SDE with
\[
        \sigma(x)=D(x)^{1/2}Q^{1/2}
\]
the correct energy form is given not by the expression
\(\langle D\nabla u,\nabla v\rangle\) but by the consistent form
\[
        \Gamma_A(u,v)=
        \langle Q^{1/2}D(x)^{1/2}\nabla u,
        Q^{1/2}D(x)^{1/2}\nabla v\rangle .
\]
We prove closability of the form, well-posedness of the forward dynamics,
Galerkin convergence, stability of the \(A\)-LSI under a Mosco limit, the
chain rule for relative entropy, and a general weak-bridge theorem. The main
result of Part~I is a functional-analytic theorem: if \(A\)-consistency, a
uniform \(A\)-LSI, and representability of the right-hand side of the backward
weak form in the negative energy space all hold, then a backward weak drift
\(v=\Asf\nabla\Phi\) exists and the basic entropy dissipation estimate holds. In
addition, we single out a three-dimensional tensor class of anisotropies, formulate a
condition for the absence of diffusion degeneracy, and obtain a rate estimate
for the homogenization limit, first on cylindrical subspaces and then on
compact-tail classes, which yields strong resolvent convergence and
convergence of the forward SDEs. The statistical closure, an independent
isotropic benchmark, and an approximation theorem for \(A\)-adapted networks
are treated in Part~II.
\end{abstract}

\keywords{noncommutative anisotropic diffusion \and Gaussian measure \and
  Dirichlet form \and logarithmic Sobolev inequality \and Mosco convergence
  \and relative entropy \and weak bridge \and homogenization}

\subjclass[2020]{60H15, 31C25, 47D07, 35B27, 28C20}

\section{Introduction}

Infinite-dimensional diffusion models require the joint treatment of three
structures: the stochastic dynamics, the geometry of the reference measure,
and the energy form
\cite{DaPratoZabczyk,PrevotRockner,BogachevGaussian}. In the commutative
situation, when \(D\) and \(Q\) are diagonalized in the same basis, the
Dirichlet form is usually written as \(\int\langle D\nabla u,\nabla v\rangle\,d\mu_0\).
The present work treats the noncommutative case, in which this reduction is
no longer valid.

An important difference from the work of Pidstrigach--Marzouk--Reich--Wang and
Hagemann--Mildenberger--Ruthotto--Steidl--Yang
\cite{Pidstrigach2024,Hagemann2025} is as follows. In those works the
infinite-dimensional setting is built around well-posedness of score-based
dynamics on function spaces and around discretization-invariant
approximations. In the present part the central object is not the generative
scheme itself but the noncommutative operator geometry: the noise has the form
\(\sigma(x)=D(x)^{1/2}Q^{1/2}\), so the carr\'e du champ is determined by the
operator \(\sigma(x)^*=Q^{1/2}D(x)^{1/2}\). When \([D(x),Q]\ne0\) this does not
reduce to a mere change of norm, as in the known results; one needs a separate
closability of the consistent form, a separate \(A\)-LSI, and a separate
duality for the weak bridge.

A further difference from
\cite{Pidstrigach2024,Hagemann2025} concerns the role of the operator \(Q\).
In the standard functional diffusion setting, \(Q\) usually fixes the
smoothness of the noise and the discretization invariance. In our case \(Q\)
also enters the energy geometry itself: the weak bridge, the dissipation, and
the logarithmic Sobolev inequality are all formulated through
\(Q^{1/2}D(x)^{1/2}\). Hence, even with the same forward noise regularity, two
models with a different operator order have different entropy
dissipations. This feature is not covered by isotropic or commutative theories
of score-based dynamics.

The aim of Part~I is to separate the analytic problem from the statistical
one. In contrast to current infinite-dimensional diffusion models
\cite{LimYoon2023,Pidstrigach2024,FranzeseMichiardi2025}, here the main object
is not only well-posedness of the forward evolution in Hilbert space, but also
the matching of the noncommutative anisotropy \(D(x)\) with the Gaussian
geometry \(Q\). We do not introduce neural-network classes, validation errors,
or lower bounds here. These questions are deferred to Part~II. Part~I
proves the basic theorem on which the score approximation and the applications
subsequently rely.

\paragraph{Terminology.}
The term \emph{score} denotes the logarithmic gradient of the density:
\(s_\rho=\nabla\log\rho\). The term ``weak bridge'' denotes the representation
of the backward weak evolution through a field \(v=\Asf\nabla\Phi\) in the
negative energy space. The word ``consistent'' refers to the order of
operators \(\sigma^*=Q^{1/2}D^{1/2}\), which determines the carr\'e du champ.

\section{A Detailed Comparison with Infinite-Dimensional Diffusion Models}
\label{sec:comparison-functional-diffusions}

Current work on infinite-dimensional diffusion models can be divided roughly
into two directions. The first studies the consistency of discretizations and
the existence of a limiting dynamics on function spaces. The second
investigates score approximation and the stability of learned backward fields.
The present part differs from both directions in that the main difficulty is
not only the passage to infinite dimension, but also the noncommutativity of
two operator structures: the covariance \(Q\) and the anisotropy \(D(x)\).

In the setting of Pidstrigach et al.\ \cite{Pidstrigach2024} the central
question is: how should one define a diffusion model on a function space so
that it does not depend on a finite-dimensional grid? Here, by contrast, the
forward noise is already given in Hilbert space, but its covariance has the
noncommutative structure \(D(x)^{1/2}QD(x)^{1/2}\). Hence, even when a strong
solution of the SDE exists, a separate problem remains: which Dirichlet form
corresponds to this noise, and which dissipation should enter the entropy
inequality. It is precisely this problem that is solved by introducing the
\(A\)-geometry.

In the work of Hagemann--Mildenberger--Ruthotto--Steidl--Yang
\cite{Hagemann2025} the main emphasis is on score-based models on function
spaces and on the approximation properties of the score field. In the present
Part~I the score field is not yet used: we prove an analytic reduction showing
that the entire statistical error in Part~II enters via a single
functional \(\Eval\). This separation is essential for the proof structure of
the series: well-posedness of the weak bridge and of the chain rule does not
depend on the chosen class of neural networks, and the statistical estimates
of Part~II need not re-prove closability of the form.

Thus the contribution of Part~I is not a repetition of existing Hilbert-space
diffusion frameworks, but the construction of a noncommutative energy layer
that can be used as the analytic basis for various classes of backward
approximations.

\section{Setup and the Consistent \(A\)-Geometry}

Let \(\Hh\) be a separable real Hilbert space, and let
\[
        \mu_0=\mathcal N(0,Q),\qquad
        Qe_k=q_ke_k,\qquad q_k>0,\qquad \sum_{k=1}^\infty q_k<\infty .
\]
We write \(\FC\) for the class of smooth cylindrical functions.

\begin{assumption}[Anisotropy]
\label{ass:A1}
For each \(x\in\Hh\) the operator \(D(x)\) is self-adjoint, positive, and
bounded. There exist \(0<d_-\le d_+<\infty\) such that
\[
        d_-I\le D(x)\le d_+I .
\]
The map \(x\mapsto D(x)^{1/2}\) is globally Lipschitz in the operator norm.
\end{assumption}

The forward noise is given by the operator
\[
        \sigma(x)=D(x)^{1/2}Q^{1/2}.
\]
Then
\[
        \sigma(x)^*=Q^{1/2}D(x)^{1/2}.
\]
It is precisely this order that determines the energy form.

We also write
\[
        \Asf(x):=D(x)^{1/2}QD(x)^{1/2}.
\]
This field of operators depends on \(x\); hence all expressions of the form
\(\Asf^{-1}\), \(\Asf\nabla\Phi\), and \(\|\,\cdot\,\|_{\Asf^{-1}}\)
appearing below are understood pointwise and are then integrated against the
appropriate measure.

\begin{definition}[The consistent \(A\)-form]
For \(u,v\in\FC\) set
\[
        \Gamma_A(u,v)(x)=
        \left\langle Q^{1/2}D(x)^{1/2}\nabla u(x),
        Q^{1/2}D(x)^{1/2}\nabla v(x)\right\rangle
\]
and
\[
        \mathcal E_A(u,v)=\int_{\Hh}\Gamma_A(u,v)\,d\mu_0 .
\]
\end{definition}

\begin{assumption}[\(A\)-compatibility]
\label{ass:Acompat}
There exist \(0<c_A\le C_A<\infty\) such that for all \(x,\xi\in\Hh\)
\[
        c_A\|Q^{1/2}\xi\|^2
        \le
        \|Q^{1/2}D(x)^{1/2}\xi\|^2
        \le
        C_A\|Q^{1/2}\xi\|^2 .
\]
\end{assumption}

\begin{theorem}[Closability of the consistent form]
\label{thm:closability}
Suppose Assumptions~\ref{ass:A1}--\ref{ass:Acompat} hold. Then the form
\((\mathcal E_A,\FC)\) is closable in \(L^2(\mu_0)\), its closure is a
symmetric Markovian Dirichlet form, and
\[
        c_A\int\|Q^{1/2}\nabla u\|^2\,d\mu_0
        \le
        \mathcal E_A(u,u)
        \le
        C_A\int\|Q^{1/2}\nabla u\|^2\,d\mu_0 .
\]
\end{theorem}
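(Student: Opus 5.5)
The two-sided bound is pointwise, so I prove it first and then use it for everything else. Fix \(u\in\FC\) and \(x\in\Hh\), and apply Assumption~\ref{ass:Acompat} with \(\xi=\nabla u(x)\). For a cylindrical \(u\), \(\nabla u(x)\) lies in a finite-dimensional span of the \(e_k\), so it is a legitimate element of \(\Hh\). This gives
\[
c_A\|Q^{1/2}\nabla u(x)\|^2\le\Gamma_A(u,u)(x)\le C_A\|Q^{1/2}\nabla u(x)\|^2 .
\]
Integrating against \(\mu_0\) yields the stated inequality on \(\FC\). The function \(x\mapsto\Gamma_A(u,u)(x)\) is measurable because \(D^{1/2}\) is Lipschitz by Assumption~\ref{ass:A1}, and it is bounded, so \(\mathcal E_A\) is well defined on \(\FC\). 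Symmetry and bilinearity of \(\mathcal E_A\) are immediate from the definition of \(\Gamma_A\).

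For closability I compare with the Gaussian (Ornstein--Uhlenbeck) form
\[
\mathcal E_0(u,u)=\int\|Q^{1/2}\nabla u\|^2\,d\mu_0 ,
\]
which is closable on \(\FC\) in \(L^2(\mu_0)\). This is classical: the Cameron--Martin integration by parts \(\int\partial_{e_k}u\,v\,d\mu_0=-\int u\,\partial_{e_k}v\,d\mu_0+q_k^{-1}\int u\,v\,\langle x,e_k\rangle\,d\mu_0\) shows that the \(Q^{1/2}\)-gradient is a closable operator from \(L^2(\mu_0)\) to \(L^2(\mu_0;\Hh)\) \cite{BogachevGaussian,MaRockner}.

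Now take \(u_n\in\FC\) with \(u_n\to0\) in \(L^2(\mu_0)\) and \(\mathcal E_A(u_n-u_m,u_n-u_m)\to0\). By the lower bound, \((u_n)\) is \(\mathcal E_0\)-Cauchy, so closability of \(\mathcal E_0\) gives \(\mathcal E_0(u_n,u_n)\to0\). The upper bound then gives \(\mathcal E_A(u_n,u_n)\to0\). The closure \(\overline{\mathcal E}_A\) therefore exists, its domain equals \(\Dom(\overline{\mathcal E}_0)=W^{1,2}_Q(\mu_0)\), and the two-sided bound extends to this domain by density.

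The Markov property is the step I expect to need the most care. For a normal contraction \(\phi\in C^\infty(\mathbb R)\) with \(\phi(0)=0\) and \(|\phi'|\le1\), and \(u\in\FC\), the composition \(\phi\circ u\) is again in \(\FC\). The chain rule gives
\[
\Gamma_A(\phi\circ u)=\phi'(u)^2\,\Gamma_A(u)\le\Gamma_A(u)
\]
pointwise, so \(\mathcal E_A(\phi\circ u)\le\mathcal E_A(u)\) on the core. By the standard criterion (Fukushima--Oshima--Takeda; Ma--R\"ockner, Prop.~I.4.10), it suffices to verify this for smooth \(\epsilon\)-approximations \(\phi_\epsilon\) of the unit contraction on \(\FC\). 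Those criteria state that a closable form whose core is stable under such \(\phi_\epsilon\) with this inequality has Markovian closure. The subtle point is that \(\phi_\epsilon\) is applied only to core elements, so I never need the chain rule on the abstract domain. Passing to general \(u\) in the domain uses only lower semicontinuity of the closed form together with \(L^2\) convergence \(\phi_\epsilon\circ u_n\to\phi_\epsilon\circ u\). I would state this criterion explicitly and cite it rather than re-derive it.
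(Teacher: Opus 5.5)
Your proposal is correct and follows essentially the same route as the paper: the pointwise sandwich from Assumption~\ref{ass:Acompat}, closability transferred from the Gaussian \(Q^{1/2}\)-gradient (which the paper re-derives inline via Gaussian integration by parts, whereas you cite it), and the Markov property checked on \(\FC\) and then passed to the closure. Your use of smooth contractions \(\phi_\epsilon\) with the Ma--R\"ockner criterion is a slightly more careful version of the paper's truncation step, because \(\eta\circ u\) for a non-smooth normal contraction \(\eta\) need not remain in \(\FC\).
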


\begin{proof}
For cylindrical functions set \(\nabla_Q u=Q^{1/2}\nabla u\). We first check
closability. Let \(u_n\in\FC\), \(u_n\to0\) in \(L^2(\mu_0)\), and
\(\mathcal E_A(u_n-u_m,u_n-u_m)\to0\). By the lower \(A\)-compatibility bound,
\[
        c_A\|\nabla_Q(u_n-u_m)\|_{L^2(\mu_0;\Hh)}^2
        \le \mathcal E_A(u_n-u_m,u_n-u_m),
\]
so \(\nabla_Q u_n\) is Cauchy in \(L^2(\mu_0;\Hh)\). Denote its limit by
\(G\). For any basis vector \(e_k\) and any cylindrical \(\psi\), Gaussian
integration by parts gives
\[
        \int \langle \nabla_Q u_n,e_k\rangle \psi\,d\mu_0
        =-\int u_n\,\partial^{Q,*}_{e_k}\psi\,d\mu_0,
\]
where \(\partial^{Q,*}_{e_k}\psi\in L^2(\mu_0)\). The right-hand side tends to
zero, since \(u_n\to0\) in \(L^2\). Passing to the limit, we obtain
\(\int\langle G,e_k\rangle\psi\,d\mu_0=0\) for all \(k\) and all cylindrical
\(\psi\). Density of cylindrical functions in \(L^2(\mu_0)\) gives \(G=0\).
Now the upper \(A\)-compatibility bound yields
\[
        \mathcal E_A(u_n,u_n)\le C_A\|\nabla_Q u_n\|_{L^2(\mu_0;\Hh)}^2\to0.
\]
Hence the form is closable. The two-sided estimate in the statement follows at
once from Assumption~\ref{ass:Acompat}. The Markov property is checked first
for cylindrical truncations \(\eta\circ u\), where \(\eta\) is a
\(1\)-Lipschitz normal contraction:
\[
        \Gamma_A(\eta(u),\eta(u))\le \Gamma_A(u,u).
\]
After closure this inequality carries over to the domain of the closed form,
giving a symmetric Markovian Dirichlet form.
\end{proof}

\section{Forward Dynamics and Galerkin Convergence}

Consider the SDE
\[
        dX_t=b(t,X_t)\,dt+\sqrt{\beta(t)}D(X_t)^{1/2}Q^{1/2}\,dW_t .
\]

\begin{assumption}[Coefficients of the forward dynamics]
\label{ass:forward}
The function \(b(t,\cdot)\) is globally Lipschitz uniformly in \(t\in[0,T]\),
\(\beta\in C([0,T])\), \(0<\beta_-\le\beta(t)\le\beta_+\), and
\(\sigma(x)=D(x)^{1/2}Q^{1/2}\) is Lipschitz as a map into
\(\LLtwo(\Hh)\).
\end{assumption}

\begin{theorem}[Well-posedness of the forward SDE]
\label{thm:forward}
Under Assumption~\ref{ass:forward}, for each
\(X_0\in L^2(\Omega;\Hh)\) there is a unique strong solution
\(X\in L^2(\Omega;C([0,T];\Hh))\). Moreover,
\[
        \E\sup_{t\le T}\|X_t\|^2
        \le C_T^{\mathrm{fwd}}(1+\E\|X_0\|^2).
\]
\end{theorem}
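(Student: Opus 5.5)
The proof will follow the standard Picard/contraction argument for SDEs with Lipschitz coefficients in Hilbert space, with the noise being \(Q\)-cylindrical Wiener \(W\) composed with \(\sigma(x)\in\LLtwo(\Hh)\), as in \cite{DaPratoZabczyk}. The first step is to record the linear growth bounds that follow from the Lipschitz assumptions. From the Lipschitz property of \(b(t,\cdot)\), uniform in \(t\), together with boundedness of \(\sup_t\|b(t,0)\|\) (which we take as part of the assumption), we get \(\|b(t,x)\|\le L(1+\|x\|)\). For the diffusion we get
\[
\|\sigma(x)\|_{\LLtwo}\le \|D(x)^{1/2}\|\,\|Q^{1/2}\|_{\LLtwo}\le d_+^{1/2}(\Tr Q)^{1/2},
\]
by Assumption~\ref{ass:A1} and \(Q\in\LLone\). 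The diffusion term is therefore even bounded. Its Lipschitz constant in \(\LLtwo\) follows from \(\|\sigma(x)-\sigma(y)\|_{\LLtwo}\le \|D(x)^{1/2}-D(y)^{1/2}\|(\Tr Q)^{1/2}\), which is consistent with Assumption~\ref{ass:forward}. Since \(\beta\le\beta_+\), the factor \(\sqrt{\beta(t)}\) only multiplies the constants.

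For existence and uniqueness, we work in the Banach space \(\mathcal S_T\) of progressively measurable continuous processes with norm \(\|X\|^2_{\mathcal S_T}=\E\sup_{t\le T}\|X_t\|^2\). On \(\mathcal S_T\) define
\[
\mathcal K(X)_t=X_0+\int_0^tb(s,X_s)\,ds+\int_0^t\sqrt{\beta(s)}\sigma(X_s)\,dW_s .
\]
The stochastic integral is a continuous square-integrable martingale, because the integrand is bounded in \(\LLtwo\). For the difference \(\mathcal K(X)-\mathcal K(Y)\), we estimate the drift part by Cauchy--Schwarz in time and the martingale part by Doob's maximal inequality together with the Itô isometry. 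This gives
\[
\E\sup_{s\le t}\|\mathcal K(X)_s-\mathcal K(Y)_s\|^2\le C\int_0^t\E\sup_{r\le s}\|X_r-Y_r\|^2\,ds .
\]
From here we either iterate to obtain \((Ct)^n/n!\) bounds, or use an exponentially weighted norm. Either route makes \(\mathcal K\) (or some power of it) a contraction, so there is a unique fixed point. Uniqueness among all strong solutions, i.e.\ pathwise uniqueness, follows from the same estimate combined with Gronwall's lemma, after localizing with stopping times \(\tau_R=\inf\{t:\|X_t\|\ge R\}\) so that all expectations are finite a priori.

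For the moment bound, we apply the same Doob/Itô-isometry estimate to the solution itself and use the linear growth bounds:
\[
\E\sup_{s\le t}\|X_s\|^2\le 3\E\|X_0\|^2+3TL^2\int_0^t(1+\E\sup_{r\le s}\|X_r\|^2)\,ds+12\beta_+ d_+\Tr Q\, T .
\]
Gronwall's lemma then yields \(\E\sup_{t\le T}\|X_t\|^2\le C_T^{\mathrm{fwd}}(1+\E\|X_0\|^2)\). Here \(C_T^{\mathrm{fwd}}\) depends only on \(T\), \(L\), \(\beta_+\), \(d_+\) and \(\Tr Q\). To apply Gronwall legitimately we again use the stopping times \(\tau_R\) and then let \(R\to\infty\) via Fatou's lemma.

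The argument is standard. The main point requiring care is the justification that the noise \(\sigma(x)\,dW\) is a genuine \(\LLtwo\)-valued stochastic integral, i.e.\ that the cylindrical Wiener process composed with \(Q^{1/2}\) gives a trace-class covariance. This is exactly where \(Q\in\LLone\) and the boundedness of \(D^{1/2}\) enter. The noncommutativity \([D,Q]\neq0\) plays no role, because only operator-norm and Hilbert--Schmidt bounds are used. The other slightly delicate step is the localization needed to close the Gronwall argument without assuming finiteness of the second moment a priori.
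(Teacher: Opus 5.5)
Your proposal is correct and follows essentially the paper's route: you check that $b$ is globally Lipschitz with linear growth and that $\sigma$ is Lipschitz and even bounded in $\LLtwo$ (via $\|\sigma(x)\|_{\LLtwo}^2\le d_+\Tr Q$, drawing on Assumption~\ref{ass:A1} exactly as the paper does), build the solution by Picard iteration, and close the moment bound and uniqueness with a maximal inequality and Gronwall; replacing It\^o's formula plus BDG by Doob plus the It\^o isometry, and the paper's local-solution-plus-extension by a global fixed point on $\mathcal S_T$, are only cosmetic differences. Two small fixes: the drift term should carry $6TL^2$ rather than $3TL^2$, since $(1+\|x\|)^2\le 2(1+\|x\|^2)$, and for pathwise uniqueness your stopping time should fire when either of the two solutions leaves the ball of radius $R$.
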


\begin{proof}
We verify the conditions for strong solvability directly. From the Lipschitz
continuity of \(D^{1/2}\) in the operator norm, we have for
\(\sigma(x)=D(x)^{1/2}Q^{1/2}\)
\[
        \|\sigma(x)-\sigma(y)\|_{\LLtwo}
        \le
        \|D(x)^{1/2}-D(y)^{1/2}\|_{\LL}\|Q^{1/2}\|_{\LLtwo}
        \le L_{D^{1/2}}\|Q^{1/2}\|_{\LLtwo}\|x-y\|.
\]
Moreover,
\[
        \|\sigma(x)\|_{\LLtwo}^2
        =\Tr(D(x)^{1/2}QD(x)^{1/2})
        \le d_+\Tr Q.
\]
For the drift, by Assumption~\ref{ass:forward},
\[
        \|b(t,x)-b(t,y)\|\le L_b\|x-y\|,
        \qquad
        \|b(t,x)\|\le L_b\|x\|+B_0,
\]
where \(B_0=\sup_{t\le T}\|b(t,0)\|\). Thus the coefficients satisfy the
global Lipschitz and linear-growth conditions in \(\Hh\) and
\(\LLtwo(\Hh)\). Successive Picard approximations give a unique local
solution; It\^o's formula for \(\|X_t\|^2\), the
Burkholder--Davis--Gundy inequality, and Young's inequality give
\[
        \E\sup_{s\le t}\|X_s\|^2
        \le
        C\Bigl(\E\|X_0\|^2+
        \int_0^t(1+\E\sup_{r\le s}\|X_r\|^2)\,ds\Bigr),
\]
where
\[
        C=C(T,L_b,B_0,\beta_+,d_+,\Tr Q,L_{D^{1/2}}\|Q^{1/2}\|_{\LLtwo}).
\]
Gronwall's lemma rules out blow-up and gives the claimed a priori estimate.
This also extends the local solution to the whole interval \([0,T]\).
Uniqueness follows from the same estimate applied to the difference of two
solutions.
\end{proof}

Let \(P_n\) be the orthogonal projection onto
\(\Hh_n=\operatorname{span}\{e_1,\dots,e_n\}\). The Galerkin system is
\[
        dX_t^n=P_nb(t,X_t^n)\,dt+
        \sqrt{\beta(t)}P_nD(X_t^n)^{1/2}Q^{1/2}\,dW_t.
\]

\begin{assumption}[Tails of the coefficients]
\label{ass:tails}
There exist \(C,\gamma>0\) such that
\[
        \E\|(I-P_n)X_0\|^2+
        \E\int_0^T\|(I-P_n)b(t,X_t)\|^2\,dt+
        \E\int_0^T\|(I-P_n)\sigma(X_t)\|_{\LLtwo}^2\,dt
        \le Cn^{-\gamma}.
\]
\end{assumption}

\begin{theorem}[Galerkin convergence]
\label{thm:galerkin}
Under Assumptions~\ref{ass:forward} and~\ref{ass:tails},
\[
        \E\sup_{t\le T}\|X_t^n-X_t\|^2\le C_T n^{-\gamma}.
\]
Consequently,
\[
        W_2(\Law(X_t^n),\Law(X_t))\le C_T n^{-\gamma/2}.
\]
\end{theorem}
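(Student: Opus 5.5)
We split the error into a tail part and a finite-dimensional part. Write \(e_t=X_t^n-X_t\). Since \(P_n\) and \(I-P_n\) are orthogonal, \(\|e_t\|^2=\|P_ne_t\|^2+\|(I-P_n)X_t\|^2\), using that \(X^n\) lives in \(\Hh_n\) once we take \(X_0^n=P_nX_0\). We treat the two pieces separately.

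\textbf{Step 1: the tail.} For the tail we project the mild/strong equation for \(X\) by \(I-P_n\):
\[
(I-P_n)X_t=(I-P_n)X_0+\int_0^t(I-P_n)b(s,X_s)\,ds+\int_0^t\sqrt{\beta(s)}(I-P_n)\sigma(X_s)\,dW_s .
\]
Doob/BDG and Cauchy--Schwarz then give
\[
\E\sup_{t\le T}\|(I-P_n)X_t\|^2\le 3\Bigl(\E\|(I-P_n)X_0\|^2+T\E\!\int_0^T\|(I-P_n)b\|^2+4\beta_+\E\!\int_0^T\|(I-P_n)\sigma(X_s)\|_{\LLtwo}^2\Bigr),
\]
which is \(\le Cn^{-\gamma}\) by Assumption~\ref{ass:tails}.

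\textbf{Step 2: the projected part.} The projected part satisfies
\[
P_ne_t=\int_0^tP_n\bigl(b(s,X^n_s)-b(s,X_s)\bigr)ds+\int_0^t\sqrt{\beta(s)}P_n\bigl(\sigma(X^n_s)-\sigma(X_s)\bigr)dW_s .
\]
Here \(\|P_n\|\le1\) and the Lipschitz bounds for \(b\) and \(\sigma\) (into \(\LLtwo\), as in the proof of Theorem~\ref{thm:forward}) bound the integrands by \(L\|e_s\|\le L(\|P_ne_s\|+\|(I-P_n)X_s\|)\). Applying BDG to the supremum, with \(\phi(t)=\E\sup_{s\le t}\|P_ne_s\|^2\), gives
\[
\phi(t)\le C\int_0^t\phi(s)\,ds+C\,\E\sup_{s\le T}\|(I-P_n)X_s\|^2 .
\]
Gronwall's lemma together with Step 1 yields \(\phi(T)\le C_Tn^{-\gamma}\). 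Adding the two pieces gives the first claim. Finiteness of \(\phi\), needed for Gronwall, follows from Theorem~\ref{thm:forward} applied to both \(X\) and the Galerkin system, whose coefficients \(P_nb\) and \(P_n\sigma\) satisfy the same Lipschitz bounds.

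\textbf{Step 3: the Wasserstein bound.} This follows by using the synchronous coupling \((X^n_t,X_t)\):
\[
W_2^2\bigl(\Law(X^n_t),\Law(X_t)\bigr)\le\E\|X^n_t-X_t\|^2\le C_Tn^{-\gamma}.
\]

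\textbf{Expected obstacle.} The main point to watch is that Assumption~\ref{ass:tails} controls tails only along the true solution \(X\), not along \(X^n\). The decomposition above is chosen precisely so that tail quantities are evaluated only at \(X_s\), while the differences at \(X^n_s\) are absorbed through the Lipschitz constants. A second, minor point is the convention \(X_0^n=P_nX_0\). If instead the Galerkin system is started at \(X_0\), its solution is not confined to \(\Hh_n\), and the initial tail must be carried separately; it is covered by the same assumption.
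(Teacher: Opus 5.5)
Your proof is correct and follows essentially the paper's route: subtract the Galerkin and full equations (with \(X_0^n=P_nX_0\)), evaluate every projection tail only along the true solution \(X\) so that Assumption~\ref{ass:tails} applies, absorb the Lipschitz differences by BDG and Gronwall, and get the \(W_2\) bound from the synchronous coupling. The only cosmetic difference is the bookkeeping. You split the error orthogonally as \(P_ne_t-(I-P_n)X_t\) and bound the tail without Gronwall. The paper instead splits the coefficients as \(P_n(b(s,X_s^n)-b(s,X_s))+(P_n-I)b(s,X_s)\) (and likewise for \(\sigma\)) and runs a single Gronwall on the full error, whose remainder terms sum exactly to your \(-(I-P_n)X_t\).
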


\begin{proof}
We subtract the equations for \(X^n\) and \(X\), apply the
Burkholder--Davis--Gundy inequality, and use Gronwall's lemma. All projection
remainders are controlled by Assumption~\ref{ass:tails}. The \(W_2\) estimate
follows from coupling the solutions through a single Wiener process.
\end{proof}

\section{The Extended \(\Asf(x)^{-1}\)-Norm}

Since \(Q\) is nuclear, the operator
\[
        \Asf(x)=D(x)^{1/2}QD(x)^{1/2}
\]
is compact, and \(\Asf(x)^{-1}\) is not a bounded operator on \(\Hh\). We
therefore use an extended pointwise norm.

\begin{definition}[The extended \(\Asf(x)^{-1}\)-norm]
For \(z\in\Hh\) set
\[
        \|z\|_{\Asf(x)^{-1}}=
        \begin{cases}
        \|\Asf(x)^{-1/2}z\|, & z\in\Ran \Asf(x)^{1/2},\\
        +\infty, & z\notin\Ran \Asf(x)^{1/2}.
        \end{cases}
\]
For a vector field \(z(x)\) we define the integral norm
\[
        \|z\|_{\Asf^{-1},L^2(\nu)}^2
        :=
        \int_{\Hh}\|z(x)\|_{\Asf(x)^{-1}}^2\,d\nu(x),
\]
provided the right-hand side is finite.
\end{definition}

\begin{lemma}[Integral duality]
\label{lem:dual}
Let \(z\in L^2(\nu;\Hh)\) with
\(z(x)\in\Ran \Asf(x)^{1/2}\) for \(\nu\)-a.e.\ \(x\). Then
\[
        \sup_{\phi\in\FC}
        \frac{\int\langle z(x),\nabla\phi(x)\rangle\,d\nu(x)}
        {\mathcal E_{A,\nu}(\phi,\phi)^{1/2}}
        =
        \|z\|_{\Asf^{-1},L^2(\nu)} .
\]
\end{lemma}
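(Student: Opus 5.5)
The identity is a weighted Cauchy--Schwarz inequality followed by an approximation argument. Here \(\mathcal E_{A,\nu}(\phi,\phi)=\int\Gamma_A(\phi,\phi)\,d\nu\) is the analogue of \(\mathcal E_A\) with \(\mu_0\) replaced by \(\nu\), and pointwise \(\Gamma_A(\phi,\phi)(x)=\|Q^{1/2}D(x)^{1/2}\nabla\phi(x)\|^2=\langle \Asf(x)\nabla\phi,\nabla\phi\rangle=\|\Asf(x)^{1/2}\nabla\phi(x)\|^2\). If \(\|z\|_{\Asf^{-1},L^2(\nu)}=+\infty\), the statement should be read as saying that the supremum is also infinite. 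I will treat the two inequalities separately.

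\emph{Upper bound.} Fix \(x\) with \(z(x)\in\Ran\Asf(x)^{1/2}\) and write \(z(x)=\Asf(x)^{1/2}w(x)\), where \(w(x)=\Asf(x)^{-1/2}z(x)\) is chosen in \((\ker\Asf(x)^{1/2})^\perp\). Note that \(\Asf(x)\) is injective, since \(D(x)^{1/2}\) is invertible and \(Q>0\). Then
\[
\langle z,\nabla\phi\rangle=\langle w,\Asf^{1/2}\nabla\phi\rangle\le\|z\|_{\Asf(x)^{-1}}\,\Gamma_A(\phi,\phi)^{1/2}.
\]
Integrating against \(\nu\) and applying Cauchy--Schwarz in \(L^2(\nu)\) gives "\(\le\)".

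\emph{Lower bound.} The Cauchy--Schwarz step is an equality exactly when \(\Asf(x)^{1/2}\nabla\phi(x)\) is proportional to \(w(x)\), that is, formally when \(\nabla\phi=\Asf^{-1}z\). This is generally not a cylindrical gradient. I would therefore phrase the lower bound as a Hilbert-space duality.

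Let \(\mathcal K\) be the closure in \(L^2(\nu;\Hh)\) of \(\{\Asf^{1/2}\nabla\phi:\phi\in\FC\}\). The functional \(\ell(\Asf^{1/2}\nabla\phi)=\int\langle w,\Asf^{1/2}\nabla\phi\rangle\,d\nu\) is well defined on this set. The supremum equals \(\|\Pi_{\mathcal K}w\|_{L^2(\nu;\Hh)}\), where \(\Pi_{\mathcal K}\) is the orthogonal projection onto \(\mathcal K\). Equality with \(\|w\|_{L^2(\nu)}=\|z\|_{\Asf^{-1},L^2(\nu)}\) therefore reduces to showing \(w\in\mathcal K\), i.e.\ density of the fields \(\Asf^{1/2}\nabla\phi\) in the relevant part of \(L^2(\nu;\Hh)\).

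To get this density I would first approximate \(w\) by fields \(\sum_{k\le n}g_k(x)e_k\) with \(g_k\) bounded cylindrical. Then I would use the \(x\)-dependence of \(\Asf(x)^{1/2}=|Q^{1/2}D(x)^{1/2}|\) together with Assumption~\ref{ass:Acompat}, which makes \(\Asf(x)^{1/2}\) comparable to \(Q^{1/2}\) in quadratic form, to transfer the problem to \(Q^{1/2}\nabla\phi\). Finally, locally in \(x\), I would realise \(Q^{1/2}\)-weighted vector fields as gradients by taking \(\phi(y)=\sum_k g_k(x)\langle y,Q^{-1/2}\ldots\rangle\) truncated at finitely many modes.

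\textbf{This density step is the main obstacle.} Gradients form only the "curl-free" part of \(L^2(\nu;\Hh)\), so in general the supremum equals only the norm of the projection of \(w\) onto gradient fields. If full density fails, the fallback is to interpret the identity with the supremum taken over the closure of \(\FC\)-gradient fields (the negative energy space). Under this reading the equality is exactly the Riesz representation, which is the form in which the lemma is used for the weak bridge.
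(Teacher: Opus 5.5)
Your upper bound is correct: write $z=\Asf^{1/2}w$ pointwise and apply Cauchy--Schwarz. Your reduction of the lower bound is also correct: since $\{\Asf^{1/2}\nabla\phi:\phi\in\FC\}$ is a dense linear subspace of your $\mathcal K$, the supremum equals $\|\Pi_{\mathcal K}w\|_{L^2(\nu;\Hh)}$.

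\textbf{The gap.} The step that fails is the density of $\mathcal K$ in $L^2(\nu;\Hh)$, and it fails for a structural reason, not a technical one: the lemma is false as stated. In your freezing construction $\phi(y)=\sum_k g_k(x)\ell_k(y)$, replacing the frozen coefficients by $g_k(y)$ adds $\sum_k\ell_k(y)\nabla g_k(y)$ to the gradient. No choice removes these terms, because $\mathcal K^\perp$ is nontrivial already in the simplest case. Take $D\equiv I$, so $\Asf=Q$ and Assumptions~\ref{ass:A1}--\ref{ass:Acompat} hold with $c_A=C_A=1$. Take $\nu=\mu_0$, write $x_k=\langle x,e_k\rangle$, and set
\[
        z(x)=q_1x_2\,e_1-q_2x_1\,e_2 .
\]
Then $z\in L^2(\mu_0;\Hh)$ and $z(x)\in\operatorname{span}\{e_1,e_2\}\subset\Ran Q^{1/2}$. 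Moreover $\|z\|_{\Asf^{-1},L^2(\mu_0)}^2=\int(q_1x_2^2+q_2x_1^2)\,d\mu_0=2q_1q_2>0$. Gaussian integration by parts, $\int g\,\partial_k\phi\,d\mu_0=\int\phi\,(q_k^{-1}x_kg-\partial_kg)\,d\mu_0$, gives $\int\langle z,\nabla\phi\rangle\,d\mu_0=\int\phi\,(x_1x_2-x_2x_1)\,d\mu_0=0$ for every $\phi\in\FC$. So the left-hand side is $0$ while the right-hand side is $\sqrt{2q_1q_2}$.

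\textbf{Comparison with the paper.} The paper's proof takes exactly your route: substitute $w=\Asf^{-1/2}z$ and invoke the Riesz identity. It silently assumes the step you flagged, since Riesz representation on the closure of gradient fields identifies the supremum with $\|w\|$ only when $w\in\mathcal K$. For the same reason, the paper's claim that continuity of the functional forces $\|z\|_{\Asf^{-1},L^2(\nu)}<\infty$ is unjustified, and so is your reading of the infinite case.

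\textbf{The correct statement.} Your fallback is the right fix. In general the supremum is $\|\Pi_{\mathcal K}\Asf^{-1/2}z\|\le\|z\|_{\Asf^{-1},L^2(\nu)}$. Equality holds if and only if $\Asf^{-1/2}z\in\mathcal K$, i.e.\ $z=\Asf^{1/2}k$ with $k\in\mathcal K$ (formally $z=\Asf\nabla\Phi$ with $\Phi$ in the energy closure). This version covers every use in the paper:
\begin{itemize}
\item Theorem~\ref{thm:basic-entropy} and Proposition~\ref{prop:chain-limit} need only the Cauchy--Schwarz upper bound.
\item In Theorem~\ref{thm:bridge}, $v_t=\Asf^{1/2}k_t$ with $k_t=\Asf^{1/2}\nabla\Phi_t\in\mathcal K$. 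Hence $\int\|v_t\|_{\Asf^{-1}}^2\,d\nu_t=\|k_t\|_{L^2(\nu_t;\Hh)}^2=\mathcal E_{A,\rho_t}(\Phi_t,\Phi_t)$ follows directly from injectivity of $\Asf(x)^{1/2}$, with no duality argument needed.
\end{itemize}
The lemma should therefore carry the extra hypothesis $\Asf^{-1/2}z\in\mathcal K$, or be stated as an inequality.
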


\begin{proof}
Consider the closure of the set of gradients \(\nabla\phi\), \(\phi\in\FC\),
in the norm
\[
        \|\nabla\phi\|_{\Asf,L^2(\nu)}^2
        =
        \int\Gamma_A(\phi,\phi)\,d\nu .
\]
The functional
\[
        \nabla\phi\mapsto\int\langle z,\nabla\phi\rangle\,d\nu
\]
is continuous in this norm if and only if
\(\|z\|_{\Asf^{-1},L^2(\nu)}<\infty\). After the substitution
\(w(x)=\Asf(x)^{-1/2}z(x)\) the statement becomes the Riesz identity in
\(L^2(\nu;\Hh)\).
\end{proof}

\section{Mosco Convergence and the \(A\)-LSI}

\begin{definition}[Mosco convergence of forms]
A sequence of closed forms \(\mathcal E_n\) converges to \(\mathcal E\) in the
Mosco sense if the liminf condition holds for weakly convergent sequences and
the limsup recovery condition holds for each element of the domain of the
limit form \cite{Mosco,KuwaeShioya}.
\end{definition}

\begin{proposition}[Preservation of the \(A\)-LSI under the Mosco limit]
\label{prop:mosco-lsi}
Let \(\mathcal E_{A_n,\nu_n}\) converge in the Mosco sense to
\(\mathcal E_{A,\nu}\), let \(\nu_n\Rightarrow\nu\), and suppose
\[
        \Ent(f^2\nu_n\|\nu_n)
        \le
        2C_{\mathrm{LSI}}^A\mathcal E_{A_n,\nu_n}(f,f)
\]
with the same constant. Then
\[
        \Ent(f^2\nu\|\nu)
        \le
        2C_{\mathrm{LSI}}^A\mathcal E_{A,\nu}(f,f).
\]
\end{proposition}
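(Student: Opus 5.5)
The plan is to use the two halves of Mosco convergence separately. The recovery (limsup) condition transports the energy side from \(\nu\) down to \(\nu_n\). Lower semicontinuity of relative entropy under weak convergence carries the entropy side back from \(\nu_n\) up to \(\nu\). Since the constant \(C_{\mathrm{LSI}}^A\) is uniform in \(n\), the inequality survives the limit.

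\textbf{Step 1 (reductions).} Fix \(f\) in the domain of \(\mathcal E_{A,\nu}\); otherwise the right-hand side is \(+\infty\) and there is nothing to prove. By the Markov property from Theorem~\ref{thm:closability}, and a truncation \(f\mapsto(f\wedge M)\vee(-M)\), we may first assume \(f\) bounded. We then let \(M\to\infty\) at the end, using monotone convergence for the entropy and the contraction property for the energy. By density of \(\FC\) in the form domain, it suffices in fact to treat \(f\in\FC\) bounded with bounded gradient, passing to the closure afterwards. This uses lower semicontinuity of \(\Ent\) in \(L^2\) and continuity of \(\mathcal E_{A,\nu}\) on the form core.

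\textbf{Step 2 (recovery sequence).} By the limsup condition, choose \(f_n\) in the domain of \(\mathcal E_{A_n,\nu_n}\) with \(f_n\to f\) strongly, in the Kuwae--Shioya sense of varying spaces \(L^2(\nu_n)\to L^2(\nu)\), and
\[
\limsup_n \mathcal E_{A_n,\nu_n}(f_n,f_n)\le \mathcal E_{A,\nu}(f,f).
\]
Applying the hypothesis to \(f_n\) gives
\[
\Ent(f_n^2\nu_n\|\nu_n)\le 2C_{\mathrm{LSI}}^A\,\mathcal E_{A_n,\nu_n}(f_n,f_n).
\]
The recovery sequence may be replaced by its truncations, which keeps the energy bound by the Markov property. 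This keeps \(f_n\) uniformly bounded.

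\textbf{Step 3 (entropy lower semicontinuity).} We need
\[
\Ent(f^2\nu\|\nu)\le\liminf_n \Ent(f_n^2\nu_n\|\nu_n).
\]
We use the Donsker--Varadhan variational representation
\[
\Ent(g\nu\|\nu)=\sup_{\psi\in C_b}\Bigl\{\int\psi g\,d\nu-\log\int e^{\psi}\,d\nu\cdot\int g\,d\nu\Bigr\}+\Bigl(\int g\,d\nu\Bigr)\log\!\int g\,d\nu,
\]
which for unnormalized \(g=f^2\) is the standard form \(\sup_\psi\{\int \psi g - \int g\log\int e^\psi\}\) corrected by the mass term. For fixed bounded continuous \(\psi\), the quantities \(\int\psi f_n^2\,d\nu_n\) and \(\int f_n^2\,d\nu_n\) converge to the corresponding integrals for \(f\): this is strong convergence in varying spaces tested against bounded continuous multipliers, together with uniform boundedness. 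The term \(\int e^\psi d\nu_n\) converges by \(\nu_n\Rightarrow\nu\). So each term in the supremum converges, the supremum of the limits is at most the liminf, and the mass term is continuous. Combining Steps 2 and 3 gives
\[
\Ent(f^2\nu\|\nu)\le 2C_{\mathrm{LSI}}^A\liminf_n\mathcal E_{A_n,\nu_n}(f_n,f_n)\le 2C_{\mathrm{LSI}}^A\mathcal E_{A,\nu}(f,f).
\]

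\textbf{Main obstacle.} The delicate step is Step 3. It is only valid once Mosco convergence is understood in the varying-space framework of \cite{KuwaeShioya}, where \(f_n\to f\) strongly means \(\|\Phi_n\tilde f-f_n\|_{L^2(\nu_n)}\to0\) for approximants \(\tilde f\) in a common dense class. The required step is to show that this implies \(\int\psi f_n^2\,d\nu_n\to\int\psi f^2\,d\nu\) for every \(\psi\in C_b\). The first attempt would be to take the identification maps \(\Phi_n\) to be restriction of cylindrical functions, which is legitimate since \(\nu_n\Rightarrow\nu\) and cylindrical \(\tilde f\) are bounded and continuous. One would then estimate \(\int\psi(f_n^2-\tilde f^2)\,d\nu_n\) by Cauchy--Schwarz with the uniform \(L^\infty\) bound from Step 1. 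Finally, \(\tilde f\) is approximated to \(f\) in \(L^2(\nu)\).
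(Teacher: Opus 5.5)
Your proposal is correct and follows the paper's proof. A Mosco recovery sequence carries the energy bound over to \(\nu_n\), lower semicontinuity of the entropy carries the entropy bound back to \(\nu\), and the uniform constant closes the inequality. You justify the lower-semicontinuity step more explicitly than the paper, via truncation, Donsker--Varadhan, and strong convergence in the varying spaces \(L^2(\nu_n)\), in the spirit of Lemma~\ref{lem:entropy-lsc-density}.

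One harmless slip: with the added mass term, your variational formula computes \(\int g\log g\,d\nu\) rather than the homogeneous entropy an LSI requires. This does not affect the argument, since \(\int f_n^2\,d\nu_n\to\int f^2\,d\nu\); alternatively, normalize \(\int f^2\,d\nu=1\) as the paper does.
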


\begin{proof}
For \(f\in\Dom(\mathcal E_{A,\nu})\) take a recovery sequence \(f_n\to f\) with
\[
        \limsup_n\mathcal E_{A_n,\nu_n}(f_n,f_n)\le\mathcal E_{A,\nu}(f,f).
\]
Lower semicontinuity of the entropy gives
\[
        \Ent(f^2\nu\|\nu)
        \le \liminf_n\Ent(f_n^2\nu_n\|\nu_n).
\]
Combining the two estimates completes the proof.
\end{proof}

\section{The Weighted Form and the Chain Rule}

Let \(\nu=\rho\mu_0\). Define
\[
        \mathcal E_{A,\rho}(u,v)=\int\Gamma_A(u,v)\rho\,d\mu_0.
\]

\begin{assumption}[Weighted closability]
\label{ass:weighted}
The density \(\rho\) is positive, \(\rho\in L^1(\mu_0)\), and the form
\(\mathcal E_{A,\rho}\) is closable. In addition, the cylindrical truncations
are a core for the form.
\end{assumption}

\begin{theorem}[Chain rule for relative entropy]
\label{thm:chain}
Let \(\rho_t\) and \(\hat\rho_t\) be weak solutions of two evolutions with the
same second-order part \(\Gamma_A\) and drifts \(v_t,\hat v_t\). Suppose that
\(h_t=\log(\rho_t/\hat\rho_t)\) admits truncations \(h_{t,R}\) and cylindrical
approximations \(h_{t,R,n}\) in the form \(\mathcal E_{A,\rho_t}\). Assume in
addition that, writing \(J_t=\int\Gamma_A(h_t,h_t)\rho_t\,d\mu_0\),
\[
        \int_0^T J_t\,dt<\infty,
        \qquad
        \int_0^T\|v_t-\hat v_t\|_{\Asf^{-1},L^2(\rho_t\mu_0)}^2\,dt<\infty .
\]
Then the entropy derivative exists in the sense of distributions in time and
\[
        \frac{d}{dt}\Ent(\rho_t\|\hat\rho_t)
        =
        -\frac12\beta(t)\int\Gamma_A(h_t,h_t)\rho_t\,d\mu_0
        +
        \int\langle v_t-\hat v_t,\nabla h_t\rangle\rho_t\,d\mu_0 .
\]
\end{theorem}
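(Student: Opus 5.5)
We derive the identity first for regularized objects and then pass to the limit, using only the weak formulations of the two evolutions. Both densities \(\rho_t\) and \(\hat\rho_t\) (relative to \(\mu_0\)) satisfy a Fokker--Planck equation in weak form. For every cylindrical \(\phi\), the forward equation reads
\[
        \frac{d}{dt}\int\phi\,\rho_t\,d\mu_0
        =
        -\frac12\beta(t)\int\Gamma_A(\phi,\rho_t\text{-weighted})\ldots
\]
More precisely, writing the second-order part as \(-\tfrac12\beta(t)\int\Gamma_A(\phi,\log\rho_t)\rho_t\,d\mu_0\) plus the drift term \(\int\langle v_t,\nabla\phi\rangle\rho_t\,d\mu_0\), and likewise for \(\hat\rho_t\) with \(\hat v_t\). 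The drifts \(v_t,\hat v_t\) are understood as already containing the Ornstein--Uhlenbeck part relative to \(\mu_0\), so that only the symmetric \(\Gamma_A\)-part and the drift difference survive.

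We then regularize the entropy. Replace \(h_t\) by the truncation \(h_{t,R}\) and then by its cylindrical approximation \(h_{t,R,n}\), and consider
\[
        \Ent_{R,n}(t)=\int \rho_t\,h_{t,R,n}\,d\mu_0 .
\]
Its time derivative splits into two pieces. The first is \(\int h_{t,R,n}\,\partial_t\rho_t\,d\mu_0\), computed from the weak equation for \(\rho_t\) with test function \(h_{t,R,n}\). The second is \(\int\rho_t\,\partial_t h_{t,R,n}\,d\mu_0\). Before regularization the latter equals \(-\int(\rho_t/\hat\rho_t)\,\partial_t\hat\rho_t\,d\mu_0\), because \(\int\partial_t\rho_t=0\). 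It is handled by the weak equation for \(\hat\rho_t\) with test function \(e^{h}\) (suitably truncated). Adding the two pieces, the \(\Gamma_A\)-terms combine as
\[
        -\tfrac12\beta\int\Gamma_A(h,\log\rho)\rho+\tfrac12\beta\int\Gamma_A(e^{h},\log\hat\rho)\hat\rho ,
\]
which, using \(\nabla e^{h}=e^{h}\nabla h\) and \(\rho=e^{h}\hat\rho\), reduces to \(-\tfrac12\beta\int\Gamma_A(h,h)\rho\,d\mu_0\). The drift terms combine to \(\int\langle v_t-\hat v_t,\nabla h_t\rangle\rho_t\,d\mu_0\). We carry this out at the regularized level and integrate in time against a test function \(\chi\in C_c^\infty(0,T)\), so that only the distributional derivative is needed.

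Passage to the limit \(n\to\infty\) and then \(R\to\infty\) is where we expect the main obstacle.
\begin{itemize}
\item \emph{Dissipation term.} Convergence \(h_{t,R,n}\to h_{t,R}\to h_t\) in \(\mathcal E_{A,\rho_t}\), guaranteed by the hypothesis together with Assumption~\ref{ass:weighted} (cylindrical truncations form a core), gives \(\int\Gamma_A(h_{t,R,n})\rho_t\to J_t\) for a.e.\ \(t\). The bound \(\int_0^TJ_t\,dt<\infty\) and dominated convergence in \(t\) then handle the time integral.
\item \emph{Cross term.} By pointwise Cauchy--Schwarz in the \(\Asf\)-geometry (the mechanism behind Lemma~\ref{lem:dual}),
\[
        \Bigl|\int\langle v_t-\hat v_t,\nabla g\rangle\rho_t\,d\mu_0\Bigr|
        \le
        \|v_t-\hat v_t\|_{\Asf^{-1},L^2(\rho_t\mu_0)}\,
        \mathcal E_{A,\rho_t}(g,g)^{1/2}.
\]
This makes the term continuous in \(g\) with respect to the form norm, so it converges. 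The two time-integrability hypotheses together make the bound integrable in \(t\).
\end{itemize}
The delicate point is the nonlinear substitution \(e^{h}\) used as a test function in the \(\hat\rho\)-equation. Here we would rely on truncation (\(\eta\circ h\) with \(\eta\) a bounded smooth contraction, which is admissible by the Markov property of Theorem~\ref{thm:closability}) so that the substituted test function stays bounded. Lower/upper semicontinuity of the entropy then identifies the limit of \(\Ent_{R,n}\) with \(\Ent(\rho_t\|\hat\rho_t)\), yielding the stated identity in \(\mathcal D'(0,T)\).
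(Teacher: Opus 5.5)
Your proposal is essentially the paper's argument. Testing the $\rho_t$-equation with $h_t$ and the $\hat\rho_t$-equation with $e^{h_t}$ is exactly the paper's carr\'e du champ computation for $\Phi(f,g)=f\log(f/g)$ (since $\Phi_f=1+h$ and $\Phi_g=-e^{h}$), and your passage $n\to\infty$, $R\to\infty$ via the core property, the $\Asf,\Asf^{-1}$ Cauchy--Schwarz bound on the drift term, and monotone convergence matches Proposition~\ref{prop:chain-limit}; as in the paper, the regularized identity remains formal, since the exact cancellation of the $\Gamma_A$-terms and the product rule for $\int\rho_t h_{t,R,n}\,d\mu_0$ rely on $\rho_t=e^{h_t}\hat\rho_t$ and on time regularity of $h_{t,R,n}$ that is not available, so your sketch stands at the same level of rigor as the original.
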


\begin{proof}
We first use \(h_{t,R,n}\) as a cylindrical test function in the weak
equations. The second-order terms cancel by the carr\'e du champ identity:
\[
        L\Phi(f,g)-\Phi_f Lf-\Phi_g Lg
        =
        -\frac12\beta\,\Gamma_A(\log(f/g),\log(f/g))f
\]
for \(\Phi(f,g)=f\log(f/g)\). Then we let \(n\to\infty\) using the core
property, along the limits \(h_{t,R,n}\to h_{t,R}\to h_t\), and
\(R\to\infty\) by monotone convergence of the entropy and Fatou's lemma for
the energy.
\end{proof}

\section{The General Weak Bridge}

Let a curve of measures \(\nu_t=\rho_t\mu_0\) be given. Denote the formal
right-hand side of the backward weak form by
\[
        F_t=\partial_t\nu_t-L_A^*\nu_t.
\]

\begin{assumption}[The right-hand-side space]
\label{ass:bridge}
For almost every \(t\) the functional \(F_t\) is continuous on
\(\Dom(\mathcal E_{A,\rho_t})/\mathbb R\):
\[
        |\langle F_t,\phi\rangle|
        \le C_t\mathcal E_{A,\rho_t}(\phi,\phi)^{1/2}.
\]
\end{assumption}

\begin{theorem}[The weak bridge]
\label{thm:bridge}
Under Assumption~\ref{ass:bridge} there exists
\(\Phi_t\in\Dom(\mathcal E_{A,\rho_t})/\mathbb R\) such that
\[
        \mathcal E_{A,\rho_t}(\Phi_t,\phi)=\langle F_t,\phi\rangle
\]
for all \(\phi\). The field
\[
        v_t(x)=\Asf(x)\nabla\Phi_t(x)
\]
realizes the backward weak form and satisfies
\[
        \int\|v_t(x)\|_{\Asf(x)^{-1}}^2\,d\nu_t(x)
        \le C_t^2.
\]
\end{theorem}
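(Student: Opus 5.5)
The plan is a Lax--Milgram/Riesz argument in the energy Hilbert space, followed by the pointwise identification of the norm through the duality of Lemma~\ref{lem:dual}. Fix a time \(t\) in the full-measure set where Assumption~\ref{ass:bridge} holds. Let \(\mathcal V_t\) be the completion of \(\Dom(\mathcal E_{A,\rho_t})/\mathbb R\) under the seminorm \(\mathcal E_{A,\rho_t}(\phi,\phi)^{1/2}\). This is a genuine norm on the quotient once constants are factored out; to see this I will use closability (Assumption~\ref{ass:weighted}) together with the lower bound of Theorem~\ref{thm:closability} and positivity of \(\rho_t\), so that a vanishing energy forces \(Q^{1/2}\nabla\phi=0\) a.e. By Assumption~\ref{ass:bridge}, \(F_t\) extends to a bounded linear functional on \(\mathcal V_t\) with norm at most \(C_t\). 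The Riesz representation theorem then gives a unique \(\Phi_t\in\mathcal V_t\) with
\[
\mathcal E_{A,\rho_t}(\Phi_t,\phi)=\langle F_t,\phi\rangle \quad\text{for all }\phi,
\]
and in addition \(\mathcal E_{A,\rho_t}(\Phi_t,\Phi_t)^{1/2}=\|F_t\|_{\mathcal V_t^*}\le C_t\).

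Next I define \(v_t=\Asf\nabla\Phi_t\) and check that it realizes the backward weak form. For cylindrical \(\phi\) I expand the carr\'e du champ:
\[
\Gamma_A(\Phi_t,\phi)=\langle Q^{1/2}D^{1/2}\nabla\Phi_t,Q^{1/2}D^{1/2}\nabla\phi\rangle=\langle D^{1/2}QD^{1/2}\nabla\Phi_t,\nabla\phi\rangle=\langle v_t,\nabla\phi\rangle .
\]
Integrating against \(\rho_t\mu_0\) gives \(\int\langle v_t,\nabla\phi\rangle\,d\nu_t=\langle F_t,\phi\rangle=\langle\partial_t\nu_t-L_A^*\nu_t,\phi\rangle\), which is exactly the statement that the drift \(v_t\) closes the backward weak equation. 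For a general \(\Phi_t\in\mathcal V_t\), which need not be cylindrical, I interpret \(\nabla_A\Phi_t:=Q^{1/2}D^{1/2}\nabla\Phi_t\) as the \(L^2(\nu_t;\Hh)\)-limit of \(\nabla_A\Phi_{t,n}\) along cylindrical approximants. This limit exists by the definition of the completion. I then set \(v_t:=D^{1/2}Q^{1/2}\nabla_A\Phi_t\), which agrees with the formula \(\Asf\nabla\Phi_t\) whenever \(\Phi_t\) is smooth.

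For the norm bound, the pointwise observation is that \(v_t(x)=\Asf(x)^{1/2}U(x)\nabla_A\Phi_t(x)\), where \(U(x)\) is the partial isometry from the polar decomposition \(D^{1/2}Q^{1/2}=\Asf^{1/2}U\). Hence \(v_t(x)\in\Ran\Asf(x)^{1/2}\) and
\[
\|v_t(x)\|_{\Asf(x)^{-1}}\le\|\nabla_A\Phi_t(x)\|,
\]
with equality when the source vector lies in \((\ker \Asf^{1/2})^\perp\). Integrating gives
\[
\int\|v_t\|^2_{\Asf^{-1}}\,d\nu_t\le\mathcal E_{A,\rho_t}(\Phi_t,\Phi_t)\le C_t^2 .
\]
Alternatively, Lemma~\ref{lem:dual} can be applied with \(z=v_t\) and \(\nu=\nu_t\): the supremum on its left-hand side equals \(\sup_\phi \langle F_t,\phi\rangle/\mathcal E_{A,\rho_t}(\phi,\phi)^{1/2}\le C_t\).

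I expect the main obstacle to be this last identification: making the passage from the abstract Riesz element in the completion \(\mathcal V_t\) to a pointwise vector field rigorous. Two points need care. The first is that \(\Ran\Asf(x)^{1/2}=\Ran D(x)^{1/2}Q^{1/2}\); this follows from the equality \(\Asf=(D^{1/2}Q^{1/2})(D^{1/2}Q^{1/2})^*\) and Douglas' range lemma. The second is measurability of \(x\mapsto\|v_t(x)\|_{\Asf(x)^{-1}}\). I will handle both by working with \(\nabla_A\Phi_t\) rather than \(\nabla\Phi_t\) itself, which avoids ever inverting the compact operator \(\Asf(x)\).
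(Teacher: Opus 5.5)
Your proposal is correct and follows the paper's route: Lax--Milgram (here simply Riesz, since the form is symmetric) on the energy space modulo constants, then testing \(v_t=\Asf\nabla\Phi_t\) against cylindrical \(\phi\), then the \(\Asf^{-1}\)-norm bound. Your pointwise identity \(\|v_t(x)\|_{\Asf(x)^{-1}}=\|\nabla_A\Phi_t(x)\|\), obtained from \(D^{1/2}Q^{1/2}=\Asf^{1/2}U\), is in fact an equality, because \(U\) is unitary (both \(D^{1/2}Q^{1/2}\) and its adjoint are injective). It is also more self-contained than the paper's appeal to Lemma~\ref{lem:dual}. That lemma's equality only holds here because \(\Asf^{-1/2}v_t\) lies in the \(L^2(\nu_t;\Hh)\)-closure of \(\{\Asf^{1/2}\nabla\phi\}\).

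Two side claims are loose in exactly the same way as in the paper, though neither affects \(v_t\) or the bound. First, zero energy gives only \(\nabla_A\phi=0\), not that \(\phi\) is constant; that would need irreducibility of \(\mathcal E_{A,\rho_t}\), so you should quotient by the null space of the seminorm instead. Second, the Riesz element lives in the energy completion, which in general is strictly larger than \(\Dom(\mathcal E_{A,\rho_t})/\mathbb R\) unless \(\nu_t\) satisfies a Poincar\'e inequality.
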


\begin{proof}
Let
\[
        \mathcal V_t=\overline{\FC/\mathbb R}^{\,\mathcal E_{A,\rho_t}}
\]
be the quotient space modulo constants, equipped with the norm
\(\|\phi\|_{\mathcal V_t}=\mathcal E_{A,\rho_t}(\phi,\phi)^{1/2}\). On
\(\mathcal V_t\) the bilinear form
\[
        B_t(\Phi,\phi)=\mathcal E_{A,\rho_t}(\Phi,\phi)
\]
is continuous and coercive:
\[
        |B_t(\Phi,\phi)|\le \|\Phi\|_{\mathcal V_t}\|\phi\|_{\mathcal V_t},
        \qquad
        B_t(\Phi,\Phi)=\|\Phi\|_{\mathcal V_t}^2 .
\]
Assumption~\ref{ass:bridge} means that \(F_t\in\mathcal V_t'\) and
\(\|F_t\|_{\mathcal V_t'}\le C_t\). By the Lax--Milgram theorem there exists a
unique \(\Phi_t\in\mathcal V_t\) such that
\[
        \mathcal E_{A,\rho_t}(\Phi_t,\phi)=\langle F_t,
\phi\rangle
        \quad\forall\phi\in\mathcal V_t,
\]
and \(\|\Phi_t\|_{\mathcal V_t}\le C_t\). Define the field in the negative
energy space by
\[
        v_t(x)=\Asf(x)\nabla\Phi_t(x).
\]
Then for cylindrical \(\phi\)
\[
        \int\langle v_t,\nabla\phi\rangle\,d\nu_t
        =\mathcal E_{A,\rho_t}(\Phi_t,
\phi)=\langle F_t,
\phi\rangle,
\]
that is, the field realizes the backward weak form. Since \(v_t=\Asf(x)\nabla\Phi_t\), the duality of
Lemma~\ref{lem:dual} gives
\[
        \int\|v_t(x)\|_{\Asf(x)^{-1}}^2d\nu_t(x)
        =\mathcal E_{A,\rho_t}(\Phi_t,
\Phi_t)
        \le C_t^2.
\]
\end{proof}

\section{The Basic Entropy Estimate}

Let the approximate backward evolution have drift \(\hat v_t\), and the exact
one have drift \(v_t\). Denote
\[
        J_t=\int\Gamma_A(h_t,h_t)\rho_t\,d\mu_0,
        \qquad h_t=\log(\rho_t/\hat\rho_t).
\]

\begin{assumption}[Drift error]
\label{ass:drift}
\[
        \int\|v_t(x)-\hat v_t(x)\|_{\Asf(x)^{-1}}^2\,d\nu_t(x)
        \le
        \frac14\beta(t)^2\Eval(t).
\]
\end{assumption}

\begin{theorem}[The basic dissipation estimate]
\label{thm:basic-entropy}
Suppose the hypotheses of Theorem~\ref{thm:chain} and
Assumption~\ref{ass:drift} hold. Then
\[
        \frac{d}{dt}\Ent(\rho_t\|\hat\rho_t)
        \le
        -\frac14\beta(t)J_t+
        \frac14\beta(t)\Eval(t).
\]
If, in addition, the approximate measure \(\hat\nu_t=\hat\rho_t\mu_0\)
satisfies the \(A\)-LSI in entropy--information form,
\[
        \Ent(\rho_t\|\hat\rho_t)
        \le
        2C_{\mathrm{LSI}}^A
        \int\Gamma_A\left(\log\frac{\rho_t}{\hat\rho_t},
        \log\frac{\rho_t}{\hat\rho_t}\right)\rho_t\,d\mu_0
        =2C_{\mathrm{LSI}}^A J_t,
\]
then
\[
        \frac{d}{dt}\Ent(\rho_t\|\hat\rho_t)
        \le
        -\frac{\beta(t)}{8C_{\mathrm{LSI}}^A}
        \Ent(\rho_t\|\hat\rho_t)
        +\frac14\beta(t)\Eval(t).
\]
\end{theorem}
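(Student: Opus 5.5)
The plan is to start from the chain rule of Theorem~\ref{thm:chain}, whose hypotheses are assumed, and to bound the cross term by duality and Young's inequality. Write \(\delta_t=v_t-\hat v_t\). Theorem~\ref{thm:chain} gives
\[
        \frac{d}{dt}\Ent(\rho_t\|\hat\rho_t)
        =-\frac12\beta(t)J_t+\int\langle \delta_t,\nabla h_t\rangle\rho_t\,d\mu_0 .
\]
The only work is to control the second term by \(J_t\) and \(\Eval(t)\).

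For the cross term I would argue pointwise in \(x\). Where \(\delta_t(x)\in\Ran\Asf(x)^{1/2}\), write \(\delta_t=\Asf^{1/2}w\) with \(\|w\|=\|\delta_t\|_{\Asf(x)^{-1}}\). Then
\[
        \langle\delta_t,\nabla h_t\rangle=\langle w,\Asf^{1/2}\nabla h_t\rangle
        \le\|\delta_t\|_{\Asf(x)^{-1}}\,\|\Asf(x)^{1/2}\nabla h_t\|,
\]
and \(\|\Asf^{1/2}\xi\|^2=\langle D^{1/2}QD^{1/2}\xi,\xi\rangle=\|Q^{1/2}D^{1/2}\xi\|^2\), so the last factor equals \(\Gamma_A(h_t,h_t)^{1/2}\). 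The set where \(\delta_t\notin\Ran\Asf^{1/2}\) is \(\nu_t\)-null, because the integrability hypothesis in Theorem~\ref{thm:chain} (equivalently Assumption~\ref{ass:drift}) forces the extended norm to be finite a.e. If \(\nabla h_t\) is only defined through the closure, I would do this first for the cylindrical approximations \(h_{t,R,n}\) and pass to the limit. Equivalently, one can invoke Lemma~\ref{lem:dual} with \(\nu=\nu_t\). Cauchy--Schwarz in \(L^2(\nu_t)\) then gives
\[
        \Bigl|\int\langle\delta_t,\nabla h_t\rangle\rho_t\,d\mu_0\Bigr|
        \le \|\delta_t\|_{\Asf^{-1},L^2(\nu_t)}J_t^{1/2}.
\]

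Next apply Young's inequality \(ab\le\frac{\beta}{4}a^2+\frac1\beta b^2\) with \(a=J_t^{1/2}\) and \(b=\|\delta_t\|_{\Asf^{-1},L^2(\nu_t)}\). Assumption~\ref{ass:drift} gives \(b^2\le\frac14\beta^2\Eval\), so the cross term is at most \(\frac14\beta J_t+\frac14\beta\Eval(t)\). Combining this with \(-\frac12\beta J_t\) yields the first inequality, in the distributional sense in time.

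For the second inequality, the \(A\)-LSI gives \(J_t\ge \Ent(\rho_t\|\hat\rho_t)/(2C^A_{\mathrm{LSI}})\). Substituting, \(-\frac14\beta J_t\le-\frac{\beta}{8C^A_{\mathrm{LSI}}}\Ent\), which is the claim.

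The main obstacle is making the pointwise duality step rigorous, since \(\Asf(x)^{-1}\) is unbounded and \(\nabla h_t\) lives only in the closure of the weighted form. I would handle this by working at the level of \(h_{t,R,n}\), where everything is cylindrical. Then I would pass to the limit using the core property of Assumption~\ref{ass:weighted}, together with the continuity of \(\phi\mapsto\int\langle\delta_t,\nabla\phi\rangle\,d\nu_t\) in the energy norm supplied by Lemma~\ref{lem:dual}.
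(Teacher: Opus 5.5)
Your proposal is correct and follows the paper's proof. The steps match: the chain rule of Theorem~\ref{thm:chain}, then Cauchy--Schwarz for the pair \(\Asf,\Asf^{-1}\) on the cross term, then Young's inequality with Assumption~\ref{ass:drift}, and finally the \(A\)-LSI in the form \(J_t\ge \Ent(\rho_t\|\hat\rho_t)/(2C_{\mathrm{LSI}}^A)\). The only difference is cosmetic: you apply a weighted Young inequality before inserting the drift bound, whereas the paper first inserts \(\|v_t-\hat v_t\|_{\Asf^{-1},L^2(\nu_t)}\le\frac12\beta(t)\Eval(t)^{1/2}\) and then uses \(ab\le\frac12(a^2+b^2)\), with identical constants.
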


\begin{proof}
From Theorem~\ref{thm:chain} and the Cauchy--Schwarz inequality for the pair
\(A,A^{-1}\) we obtain
\[
        \left|\int\langle v_t-\hat v_t,\nabla h_t\rangle d\nu_t\right|
        \le
        \left(\int\|v_t(x)-\hat v_t(x)\|_{\Asf(x)^{-1}}^2d\nu_t(x)\right)^{1/2}
        J_t^{1/2}.
\]
Substituting Assumption~\ref{ass:drift} and using Young's inequality gives the
first estimate. The second follows from the \(A\)-LSI for the approximate
measure \(\hat\nu_t\).
\end{proof}

\section{A Global Analytic Class Without a Smallness Condition}

\begin{definition}[A globally \(A\)-admissible analytic class]
\label{def:glob-part1}
The class \(\mathfrak C_{\mathrm{glob}}^{\mathrm{an}}\) consists of tuples
\[
        (D,\nu_t,\hat\nu_t,v_t,\hat v_t,\Eval),
        \qquad \nu_t=\rho_t\mu_0,
        \quad \hat\nu_t=\hat\rho_t\mu_0,
\]
for which the following hold: global \(A\)-compatibility, the \(A\)-LSI for
\(\hat\nu_t\), weighted closability for \(\rho_t\mu_0\), the weak-bridge
condition, the chain rule, and the drift-error condition.
\end{definition}

\begin{theorem}[The analytic theorem of Part~I]
\label{thm:part1-main}
For any tuple \((D,\nu_t,\hat\nu_t,v_t,\hat v_t,\Eval)\in\mathfrak C_{\mathrm{glob}}^{\mathrm{an}}\)
there exists a weak backward drift \(v_t(x)=\Asf(x)\nabla\Phi_t(x)\), the chain
rule for relative entropy holds, and the estimate
\[
        \Ent(\rho_t\|\hat\rho_t)
        \le
        \Ent(\rho_0\|\hat\rho_0)
        \exp\left(-c_A^*\int_0^t\beta(s)\,ds\right)
        +\frac14\int_0^t e^{-c_A^*\int_s^t\beta(r)\,dr}\beta(s)\Eval(s)\,ds,
\]
is valid, where \(c_A^*=(8C_{\mathrm{LSI}}^A)^{-1}\).
\end{theorem}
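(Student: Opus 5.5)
The plan is to assemble the theorem from the three results already proved, Theorem~\ref{thm:bridge}, Theorem~\ref{thm:chain} and Theorem~\ref{thm:basic-entropy}, and then integrate the resulting differential inequality with a Gronwall argument.

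\emph{Existence of the drift.} Membership in \(\mathfrak C_{\mathrm{glob}}^{\mathrm{an}}\) includes the weak-bridge condition, i.e.\ Assumption~\ref{ass:bridge} for a.e.\ \(t\). Global \(A\)-compatibility together with weighted closability (Assumption~\ref{ass:weighted}) makes \(\mathcal V_t\) a Hilbert space. Theorem~\ref{thm:bridge} then yields \(\Phi_t\) and the drift \(v_t=\Asf\nabla\Phi_t\), with \(\int\|v_t\|^2_{\Asf^{-1}}\,d\nu_t\le C_t^2\).

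\emph{Chain rule.} The hypotheses of Theorem~\ref{thm:chain} are part of the definition of the class. The one point to check is the drift integrability \(\int_0^T\|v_t-\hat v_t\|^2_{\Asf^{-1},L^2(\nu_t)}\,dt<\infty\). It follows from Assumption~\ref{ass:drift} provided \(\beta\le\beta_+\) and \(\int_0^T\Eval<\infty\). If \(\int_0^T\Eval=\infty\), the claimed bound is trivially true, so I will assume \(\Eval\in L^1(0,T)\).

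\emph{Differential inequality.} The class includes the \(A\)-LSI for \(\hat\nu_t\) in entropy--information form. Theorem~\ref{thm:basic-entropy} then gives, in the sense of distributions on \((0,T)\),
\[
\frac{d}{dt}H_t\le -c_A^*\beta(t)H_t+\tfrac14\beta(t)\Eval(t),\qquad H_t:=\Ent(\rho_t\|\hat\rho_t),\quad c_A^*=(8C_{\mathrm{LSI}}^A)^{-1}.
\]

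\emph{Integration.} Set \(B(t)=\int_0^t\beta\) and multiply by the integrating factor \(e^{c_A^*B(t)}\). Since \(\beta\) is continuous and bounded, \(e^{c_A^*B}\) is \(C^1\), so the product rule holds for distributional derivatives. This gives
\[
\frac{d}{dt}\bigl(e^{c_A^*B(t)}H_t\bigr)\le \tfrac14 e^{c_A^*B(t)}\beta(t)\Eval(t)
\]
as distributions. A distribution whose derivative is bounded above by an \(L^1\) function is a function of bounded variation. Integrating from \(0\) to \(t\) and multiplying by \(e^{-c_A^*B(t)}\) gives exactly the stated estimate.

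\emph{Main obstacle.} The delicate step is evaluating the bound at the endpoints \(0\) and \(t\), since a distributional inequality only controls \(H\) almost everywhere. I will first prove the estimate for a.e.\ \(t\) by pairing the inequality with nonnegative mollified indicator test functions of \([s,t]\). I then pass to every \(t\) using lower semicontinuity of relative entropy along the weakly continuous curves \(\nu_t,\hat\nu_t\), which holds for weak solutions. At \(t=0\) I use the same lower semicontinuity together with the chain rule's truncation scheme: monotone convergence in \(R\) makes \(t\mapsto H_t\) a.e.\ equal to a right-continuous function whose value at \(0\) is \(\Ent(\rho_0\|\hat\rho_0)\).
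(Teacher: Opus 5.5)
Your proposal follows the paper's route. The paper's proof just assembles Theorem~\ref{thm:bridge}, Theorem~\ref{thm:chain} and the second estimate of Theorem~\ref{thm:basic-entropy}, then applies Gronwall, and your integrating-factor computation with $e^{c_A^*\int_0^t\beta}$ is that Gronwall step written out. The paper says nothing about endpoints, so your extra care there is a genuine addition, but your handling of $t=0$ does not work as written.

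To put $\Ent(\rho_0\|\hat\rho_0)$ on the right-hand side you need an \emph{upper} bound $\limsup_{s\downarrow0}H_s\le\Ent(\rho_0\|\hat\rho_0)$. The distributional inequality on $(0,T)$ only yields $H_t\le H_se^{-c_A^*\int_s^t\beta}+\frac14\int_s^t e^{-c_A^*\int_r^t\beta}\beta(r)\Eval(r)\,dr$ for $0<s<t$. Lower semicontinuity of relative entropy along weakly continuous curves gives the opposite inequality, $\Ent(\rho_0\|\hat\rho_0)\le\liminf_{s\downarrow0}H_s$. Writing $H_t$ as a monotone limit in $R$ of truncated entropies again gives at best lower semicontinuity, since it is a supremum of (at best) continuous functions. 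So neither ingredient produces a right-continuous representative whose value at $0$ is $\Ent(\rho_0\|\hat\rho_0)$. Nothing in a distributional identity on the open interval $(0,T)$ rules out $\lim_{s\downarrow0}H_s>\Ent(\rho_0\|\hat\rho_0)$.

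What is actually needed is the chain rule in integrated form on $[0,t]$. The cylindrical and truncated identities must be derived from the weak formulation including the initial datum, so that after $n\to\infty$ and $R\to\infty$ one has $H_t-H_0=\int_0^t(\dots)\,ds$. The limits use monotone convergence for the entropies at both endpoints and for the dissipation, and dominated convergence for the drift term. This is how the paper tacitly reads the class condition ``the chain rule holds'' when it calls the estimate the integral form of Theorem~\ref{thm:basic-entropy}. Granted that, your passage from a.e.\ $t$ to every $t$ via lower semicontinuity is correct, because that is the right direction at the upper endpoint.

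A minor point: the reduction ``if $\int_0^T\Eval=\infty$ the bound is trivial'' should be made on $[0,t]$, since $\int_0^t\Eval$ may be finite when $\int_0^T\Eval$ is not.
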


\begin{proof}
This is the integral form of Theorem~\ref{thm:basic-entropy} after applying
Gronwall's inequality. Existence of the weak bridge is provided by
Theorem~\ref{thm:bridge}; the chain rule by Theorem~\ref{thm:chain}.
\end{proof}

\section{Tensor Anisotropy and Non-Degeneracy of the Diffusion}
\label{sec:tensor-nondeg}

In applications \(D(x)\) often arises as a tensor conductivity or a tensor
mobility. In the infinite-dimensional noncommutative problem the two-sided
bound \(d_-I\le D(x)\le d_+I\) is not by itself always sufficient to compare
\(\|Q^{1/2}D(x)^{1/2}\xi\|\) and \(\|Q^{1/2}\xi\|\), since \(D(x)\) and \(Q\)
need not commute. The top-level condition is therefore formulated not through
the spectrum of \(D(x)\) but through relative \(Q\)-compatibility.

\begin{definition}[A tensor \(Q\)-compatible class]
\label{def:tensor-compatible}
A family of tensors \(\mathbb D(x)\in\LL(\Hh)\) is called \(Q\)-compatible if
\(\mathbb D(x)\) is self-adjoint, positive, measurable in \(x\), and there
exist \(0<c_A\le C_A<\infty\) such that
\[
        c_A\|Q^{1/2}\xi\|^2
        \le
        \|Q^{1/2}\mathbb D(x)^{1/2}\xi\|^2
        \le
        C_A\|Q^{1/2}\xi\|^2
\]
for all \(x,\xi\in\Hh\). If \(Q\) and \(\mathbb D(x)\) commute, one may take
\(c_A=\lambda_{\min}(\mathbb D)\) and \(C_A=\lambda_{\max}(\mathbb D)\). In
the noncommutative case these constants are verified as relative
\(Q\)-bounds.
\end{definition}

\begin{assumption}[Non-degeneracy of the diffusion]
\label{ass:no-diff-degeneration}
The family \(D(x)\) satisfies global \(A\)-compatibility with constants
\(c_A,C_A\) independent of \(x\), and
\[
        \limsup_{\|x\|\to\infty}
        \|Q^{1/2}D(x)^{1/2}Q^{-1/2}\|_{\LL(\Ran Q^{1/2})}<\infty .
\]
A weaker variant allows polynomial growth of this norm, provided the measures under consideration have suitable moments.
\end{assumption}

\begin{proposition}[Transfer of the analytic theory to the tensor class]
\label{prop:tensor-transfer}
Let \(D(x)=\mathbb D(x)\) belong to the class of
Definition~\ref{def:tensor-compatible} and let
Assumption~\ref{ass:no-diff-degeneration} hold. Then all the results of
Sections~3--8 remain valid with \(D\) replaced by \(\mathbb D\). In
particular, the form
\[
        \mathcal E_{\mathbb A}(u,v)
        =
        \int
        \langle Q^{1/2}\mathbb D(x)^{1/2}\nabla u,
        Q^{1/2}\mathbb D(x)^{1/2}\nabla v\rangle\,d\mu_0
\]
is closable, the weak bridge has the form
\(v_t(x)=\mathbb A(x)\nabla\Phi_t(x)\),
\[
        \mathbb A(x)=\mathbb D(x)^{1/2}Q\mathbb D(x)^{1/2},
\]
and the basic entropy estimate of Theorem~\ref{thm:part1-main} continues to
hold with the same constants \(c_A,C_A,C_{\mathrm{LSI}}^A\).
\end{proposition}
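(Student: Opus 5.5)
The plan is to check that each result of Sections~3--8 uses $D$ only through a few structural properties, and that $\mathbb D$ has these properties under Definition~\ref{def:tensor-compatible} and Assumption~\ref{ass:no-diff-degeneration}. Then the proofs carry over verbatim with $\mathbb A(x)=\mathbb D(x)^{1/2}Q\mathbb D(x)^{1/2}$ in place of $\Asf(x)$. I would sort the earlier results into two groups. The first group is the energy-geometric results: Theorem~\ref{thm:closability}, Lemma~\ref{lem:dual}, Proposition~\ref{prop:mosco-lsi}, Theorems~\ref{thm:chain}, \ref{thm:bridge}, \ref{thm:basic-entropy} and~\ref{thm:part1-main}. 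The second group is the forward dynamics: Theorems~\ref{thm:forward} and~\ref{thm:galerkin}.

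For the first group, I would observe that the proof of Theorem~\ref{thm:closability} uses only the two-sided bound of Assumption~\ref{ass:Acompat}. That bound is exactly the defining inequality of a $Q$-compatible tensor, with the same $c_A,C_A$. Hence closability follows as before. So do the norm equivalence $\mathcal E_{\mathbb A}\simeq\int\|Q^{1/2}\nabla u\|^2d\mu_0$ and the Markov property, since the pointwise contraction inequality $\Gamma_{\mathbb A}(\eta(u),\eta(u))\le\Gamma_{\mathbb A}(u,u)$ uses only linearity of $\xi\mapsto Q^{1/2}\mathbb D(x)^{1/2}\xi$. Measurability of $x\mapsto\mathbb D(x)$ ensures that $\Gamma_{\mathbb A}(u,u)$ is measurable, so all the integrals make sense. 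Lemma~\ref{lem:dual} is a pointwise Riesz identity for the positive compact operator $\mathbb A(x)$. Only self-adjointness, positivity and measurability are needed there. The substitution $w=\mathbb A^{-1/2}z$ requires a measurable choice, which I would obtain via the measurable functional calculus. Lax--Milgram in Theorem~\ref{thm:bridge}, Cauchy--Schwarz and Young in Theorem~\ref{thm:basic-entropy}, and Gronwall in Theorem~\ref{thm:part1-main} are abstract Hilbert-space arguments. The constants appearing there, namely $C_t$, $C^A_{\mathrm{LSI}}$ and $c_A^*$, are inputs of the class $\mathfrak C^{\mathrm{an}}_{\mathrm{glob}}$ and do not depend on how $D$ was produced. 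This gives the claim about the same constants.

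For the second group, the forward results need $\sigma(x)=\mathbb D(x)^{1/2}Q^{1/2}$ to be Lipschitz into $\LLtwo(\Hh)$ and to grow at most linearly. Here I would use Assumption~\ref{ass:no-diff-degeneration}. The Hilbert--Schmidt norm is invariant under adjoints, so $\|\sigma(x)\|_{\LLtwo}=\|Q^{1/2}\mathbb D(x)^{1/2}\|_{\LLtwo}$. Writing $Q^{1/2}\mathbb D(x)^{1/2}=\bigl(Q^{1/2}\mathbb D(x)^{1/2}Q^{-1/2}\bigr)Q^{1/2}$ on $\Ran Q^{1/2}$ then gives $\|\sigma(x)\|_{\LLtwo}^2\le M(x)^2\Tr Q$, where $M(x)$ is the relative norm in Assumption~\ref{ass:no-diff-degeneration}. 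Since the limsup is finite, $M$ is bounded outside a ball. Inside the ball, the upper compatibility bound together with $\mathbb D(x)$ being bounded gives a local bound, which yields the growth condition. The Lipschitz requirement of Assumption~\ref{ass:forward} is stated directly on $\sigma$, so I take it as part of the hypotheses when the forward results are invoked. The polynomial-growth variant would be handled by replacing the Gronwall bound with a moment estimate.

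I expect the main obstacle to be this interplay between operator order and Hilbert--Schmidt norms. The quantity $M(x)$ is not controlled by the spectral bounds of $\mathbb D(x)$ alone. One must check carefully that $Q^{1/2}\mathbb D(x)^{1/2}Q^{-1/2}$ extends from $\Ran Q^{1/2}$ to a bounded operator, so that the factorization is legitimate. I would first try to derive this extension on the dense set $\Ran Q^{1/2}$ and extend by continuity. Everything else is a verbatim transfer.
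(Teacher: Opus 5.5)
Your proposal is correct and follows the paper's own argument: the earlier proofs use only closability, the two-sided $Q$-relative bound and abstract Hilbert-space tools, never commutativity, so they transfer verbatim. Your separate treatment of Section~4 (taking the Lipschitz property of $\sigma$ from Assumption~\ref{ass:forward} as a hypothesis) addresses a point the paper's one-line proof passes over. The obstacle you anticipate is in fact immediate, since for $\eta=Q^{1/2}\xi$ the upper compatibility bound gives $\|Q^{1/2}\mathbb D(x)^{1/2}Q^{-1/2}\eta\|^2\le C_A\|\eta\|^2$, and directly $\|\sigma(x)\|_{\LLtwo}^2=\sum_k\|Q^{1/2}\mathbb D(x)^{1/2}e_k\|^2\le C_A\Tr Q$ for every $x$, so no appeal to the limsup condition or splitting into a ball and its complement is needed.
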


\begin{proof}
The proofs of Sections~3--8 use only three properties: closability of the
consistent form, the two-sided \(A\)-compatibility, and representability of
the weak bridge in the negative energy space. These properties are built into
Definition~\ref{def:tensor-compatible} and
Assumption~\ref{ass:no-diff-degeneration}. Commutativity of the operators is
not used. Hence all estimates carry over unchanged.
\end{proof}

\begin{remark}[On the role of spectral bounds]
In a finite-dimensional discretization, or in the commutative tensor case,
the spectral bounds of the tensor give \(A\)-compatibility directly. In the
infinite-dimensional noncommutative case this is false in general, which is
precisely why the condition is formulated through \(Q^{1/2}D^{1/2}\), and not
only through \(\lambda_{\min}(D)\) and \(\lambda_{\max}(D)\).
\end{remark}

\section{Details of the Key Proofs}
\label{sec:part1-proof-details}

This section spells out the technical points that were stated compactly in the
main body.

\subsection{Verifying the hypotheses of the forward-SDE theorem}

Theorem~\ref{thm:forward} relies on standard results for SDEs in Hilbert
spaces \cite[Ch.~7]{DaPratoZabczyk}, \cite[Ch.~4]{PrevotRockner}. We check the
conditions for the specific coefficient
\[
        \sigma(x)=D(x)^{1/2}Q^{1/2}.
\]
The Lipschitz continuity of \(D^{1/2}\) gives
\[
        \|\sigma(x)-\sigma(y)\|_{\LLtwo}
        \le
        \|D(x)^{1/2}-D(y)^{1/2}\|_{\LL}\|Q^{1/2}\|_{\LLtwo}
        \le
        L_D\|Q^{1/2}\|_{\LLtwo}\|x-y\|.
\]
Moreover,
\[
        \|\sigma(x)\|_{\LLtwo}^2
        =
        \Tr(D(x)^{1/2}QD(x)^{1/2})
        \le
        d_+\Tr Q .
\]
Hence \(\sigma\) has linear growth and is in fact uniformly bounded. For the
drift,
\[
        \|b(t,x)-b(t,y)\|\le L_b\|x-y\|,
        \qquad
        \|b(t,x)\|\le L_b\|x\|+\|b(t,0)\|.
\]
If \(B_0:=\sup_{t\le T}\|b(t,0)\|<\infty\), then It\^o's formula for
\(\|X_t\|^2\), Young's inequality, and BDG give
\[
        \E\sup_{s\le t}\|X_s\|^2
        \le
        C\left(\E\|X_0\|^2+
        \int_0^t(1+\E\sup_{r\le s}\|X_r\|^2)\,ds\right),
\]
where
\[
        C=C(T,L_b,B_0,\beta_+,d_+,\Tr Q).
\]
Gronwall's lemma yields the estimate of Theorem~\ref{thm:forward} with
\[
        C_T^{\mathrm{fwd}}\le C_0\exp\{C_1T(1+L_b^2+\beta_+d_+\Tr Q)\}.
\]

\subsection{Projection tails in the Galerkin estimate}

In Theorem~\ref{thm:galerkin} the difference \(\Delta_t^n=X_t^n-X_t\)
satisfies
\[
\begin{aligned}
\Delta_t^n
=&(P_n-I)X_0
+\int_0^t\{P_nb(s,X_s^n)-b(s,X_s)\}\,ds\\
&+\int_0^t\sqrt{\beta(s)}
\{P_n\sigma(X_s^n)-\sigma(X_s)\}\,dW_s .
\end{aligned}
\]
Splitting
\[
        P_nb(s,X_s^n)-b(s,X_s)
        =
        P_n(b(s,X_s^n)-b(s,X_s))+(P_n-I)b(s,X_s)
\]
and similarly for \(\sigma\), we obtain
\[
\begin{aligned}
\E\sup_{r\le t}\|\Delta_r^n\|^2
\le& C\E\|(I-P_n)X_0\|^2
+C\int_0^t\E\sup_{r\le s}\|\Delta_r^n\|^2\,ds\\
&+C\E\int_0^t\|(I-P_n)b(s,X_s)\|^2\,ds\\
&+C\E\int_0^t\|(I-P_n)\sigma(X_s)\|_{\LLtwo}^2\,ds .
\end{aligned}
\]
Here \(C\) depends only on \(T,L_b,L_\sigma,\beta_+\) (with \(L_\sigma=L_{D^{1/2}}\|Q^{1/2}\|_{\LLtwo}\)) and the universal BDG
constant. Assumption~\ref{ass:tails} and Gronwall give
\[
        \E\sup_{t\le T}\|\Delta_t^n\|^2\le C_T n^{-\gamma}.
\]
This also shows that the exponent \(\gamma/2\) in the \(W_2\) estimate comes from the projection.

\subsection{The weak bridge: verifying the Lax--Milgram hypotheses}

In Theorem~\ref{thm:bridge} we consider the quotient space
\[
        \mathcal V_t=
        \overline{\FC/\mathbb R}^{\,\mathcal E_{A,\rho_t}}.
\]
On it the bilinear form
\[
        B_t(\Phi,\phi)=\mathcal E_{A,\rho_t}(\Phi,\phi)
\]
is continuous and coercive:
\[
        |B_t(\Phi,\phi)|
        \le
        \|\Phi\|_{\mathcal V_t}\|\phi\|_{\mathcal V_t},
        \qquad
        B_t(\Phi,\Phi)=\|\Phi\|_{\mathcal V_t}^2 .
\]
Assumption~\ref{ass:bridge} means that \(F_t\in\mathcal V_t'\). Hence there
exists a unique element \(\Phi_t\in\mathcal V_t\) such that
\[
        B_t(\Phi_t,\phi)=\langle F_t,\phi\rangle .
\]
Moreover,
\[
        \|\Phi_t\|_{\mathcal V_t}
        \le
        \|F_t\|_{\mathcal V_t'}.
\]
The field \(v_t=\Asf\nabla\Phi_t\) is defined as an element of the dual
gradient space, and the estimate
\[
        \int\|v_t\|_{\Asf^{-1}}^2\,d\nu_t
        =
        \|\Phi_t\|_{\mathcal V_t}^2
        \le
        \|F_t\|_{\mathcal V_t'}^2
\]
follows from Lemma~\ref{lem:dual}.

\subsection{The entropy estimate: explicit constants}

From the chain rule we have
\[
        \dot H_t
        =
        -\frac12\beta(t)J_t+
        \int\langle v_t-\hat v_t,\nabla h_t\rangle\,d\nu_t.
\]
By the \(A,A^{-1}\) duality,
\[
        \left|\int\langle v_t-\hat v_t,\nabla h_t\rangle\,d\nu_t\right|
        \le
        \|v_t-\hat v_t\|_{\Asf^{-1},L^2(\nu_t)}J_t^{1/2}.
\]
The drift-error condition gives
\[
        \|v_t-\hat v_t\|_{\Asf^{-1},L^2(\nu_t)}
        \le
        \frac12\beta(t)\Eval(t)^{1/2}.
\]
Hence
\[
        \dot H_t
        \le
        -\frac12\beta(t)J_t+
        \frac12\beta(t)\Eval(t)^{1/2}J_t^{1/2}
        \le
        -\frac14\beta(t)J_t+\frac14\beta(t)\Eval(t).
\]
If \(\hat\nu_t\) satisfies the \(A\)-LSI and
\[
        J_t\ge (2C_{\mathrm{LSI}}^A)^{-1}H_t,
\]
then
\[
        \dot H_t
        \le
        -\frac{\beta(t)}{8C_{\mathrm{LSI}}^A}H_t
        +\frac14\beta(t)\Eval(t).
\]
It is exactly this constant that is used in Part~II when closing
\(\Eval(t)\le\widetilde q^{\,2}J_t+\Delta_M\).

\subsection{The tensor transfer}

For Section~\ref{sec:tensor-nondeg} it is important that all estimates use only
the operator
\[
        Q^{1/2}\mathbb D(x)^{1/2}
\]
and do not require \(\mathbb D(x)\) and \(Q\) to commute. If
\[
        c_A\|Q^{1/2}\xi\|^2
        \le
        \|Q^{1/2}\mathbb D(x)^{1/2}\xi\|^2
        \le
        C_A\|Q^{1/2}\xi\|^2 ,
\]
then the proof of Theorem~\ref{thm:closability}, of the weak bridge, and of
the entropy estimate carries over verbatim. This shows that the non-degeneracy
condition for the diffusion is not a mere technicality, but the minimal
way to retain the \(A\)-geometry under noncommutativity.

\section{Details of the Mosco Passage and the Chain Rule}
\label{sec:mosco-chain-details}

Sections~6 and~7 contain two technical passages that are often written too
briefly: preservation of the logarithmic Sobolev inequality under a Mosco
limit, and the use of the logarithmic density ratio as a test function in the
weak evolution. Below these passages are written out in a form sufficient to
verify all the limit operations.

\begin{lemma}[Normalization and convergence of measures with densities]
\label{lem:entropy-lsc-density}
Let \(\nu_n\Rightarrow\nu\), let \(f_n\to f\) strongly in \(L^2\) along a
Mosco recovery sequence, and let
\[
        \int f_n^2\,d\nu_n=1,\qquad
        \sup_n\int f_n^2\log_+ f_n^2\,d\nu_n<\infty .
\]
Then, after replacing \(f_n\) by normalized truncations,
\[
        f_n^2\nu_n\Rightarrow f^2\nu,\qquad
        \int f^2\,d\nu=1,
\]
and
\[
        \Ent(f^2\nu\|\nu)
        \le
        \liminf_{n\to\infty}\Ent(f_n^2\nu_n\|\nu_n).
\]
\end{lemma}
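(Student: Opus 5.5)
The plan is to reduce everything to the joint lower semicontinuity of relative entropy under weak convergence of both arguments. That is the Donsker--Varadhan variational formula
\[
        \Ent(\alpha\|\beta)=\sup_{g\in C_b}\Bigl\{\int g\,d\alpha-\log\int e^{g}\,d\beta\Bigr\},
\]
which presents \((\alpha,\beta)\mapsto\Ent(\alpha\|\beta)\) as a supremum of weakly continuous functionals. It is therefore jointly weakly lower semicontinuous. Applied with \(\alpha_n=f_n^2\nu_n\) and \(\beta_n=\nu_n\), this gives the entropy inequality once we know \(f_n^2\nu_n\Rightarrow f^2\nu\) and \(\int f^2\,d\nu=1\). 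The real work is in establishing these two facts.

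\textbf{Step 1 (truncation).} Fix \(R>0\) and set \(f_{n,R}=(f_n\wedge R)\vee(-R)\), which is a normal contraction of \(f_n\). The Markov property from Theorem~\ref{thm:closability} keeps it in the form domain, and it does not increase the energy. The uniform bound on \(\int f_n^2\log_+f_n^2\,d\nu_n\) controls the tails by de la Vallée-Poussin:
\[
        \int_{\{f_n^2>R^2\}}f_n^2\,d\nu_n\le\frac{\sup_m\int f_m^2\log_+f_m^2\,d\nu_m}{\log R^2}\to0
\]
uniformly in \(n\). Hence \(\int f_{n,R}^2\,d\nu_n=1-\varepsilon_{n,R}\) with \(\sup_n\varepsilon_{n,R}\to0\) as \(R\to\infty\). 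We then normalize, \(\tilde f_{n,R}=f_{n,R}/(1-\varepsilon_{n,R})^{1/2}\); these are the ``normalized truncations'' of the statement. The same uniform integrability shows that truncation and normalization change the entropies \(\Ent(f_n^2\nu_n\|\nu_n)\) by \(o(1)\) as \(R\to\infty\), uniformly in \(n\). This uses the continuity of \(s\mapsto s\log s\) together with the bound on \(f^2\log_+f^2\).

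\textbf{Step 2 (weak convergence of the densities).} For fixed \(R\) we must show \(\int g\,f_{n,R}^2\,d\nu_n\to\int g\,f_R^2\,d\nu\) for \(g\in C_b\). I would interpret ``\(f_n\to f\) strongly in \(L^2\) along a Mosco recovery sequence'' in the Kuwae--Shioya sense of varying spaces \(L^2(\nu_n)\). That means there are cylindrical \(\tilde\phi\) with \(\phi\to f\) in \(L^2(\nu)\) and
\[
        \limsup_n\|f_n-\phi\|_{L^2(\nu_n)}\le\varepsilon .
\]
Because \(\phi\) is bounded and continuous, \(\nu_n\Rightarrow\nu\) gives \(\int g\,(\phi_R)^2\,d\nu_n\to\int g\,(\phi_R)^2\,d\nu\). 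The difference
\[
        \Bigl|\int g\,(f_{n,R}^2-\phi_R^2)\,d\nu_n\Bigr|\le 2R\|g\|_\infty\|f_n-\phi\|_{L^2(\nu_n)}
\]
is controlled by the contraction property of truncation. Letting \(\varepsilon\to0\) yields \(\tilde f_{n,R}^2\nu_n\Rightarrow\tilde f_R^2\nu\). Taking \(g=1\) gives \(\int f_R^2\,d\nu=\lim_n(1-\varepsilon_{n,R})\). Letting \(R\to\infty\) and using monotone convergence gives \(\int f^2\,d\nu=1\), and the normalized truncated measures converge weakly to \(f^2\nu\).

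\textbf{Step 3 (conclusion).} By Donsker--Varadhan lower semicontinuity, \(\Ent(\tilde f_R^2\nu\|\nu)\le\liminf_n\Ent(\tilde f_{n,R}^2\nu_n\|\nu_n)\). By Step 1 the right-hand side is at most \(\liminf_n\Ent(f_n^2\nu_n\|\nu_n)+o_R(1)\). Letting \(R\to\infty\) on the left, using lower semicontinuity once more (or monotone convergence for \(s\log s\) on truncations), gives the claim.

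The main obstacle is Step 2. The meaning of strong \(L^2\) convergence across the varying measures \(\nu_n\) must be pinned down, and the approximation by bounded continuous cylindrical functions must be carried out uniformly in \(n\). The truncation at level \(R\) is what makes the map \(f\mapsto f^2\) Lipschitz there, so Step 2 cannot be done before Step 1.
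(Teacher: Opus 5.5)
Your argument follows the paper's route: truncate at level \(R\), obtain \(f_{n,R}^2\nu_n\Rightarrow f_R^2\nu\), apply lower semicontinuity of entropy, and let \(R\to\infty\) using the uniform \(\log_+\) bound. You also make explicit two points the paper glosses over. One is the Kuwae--Shioya meaning of strong \(L^2\) convergence across the spaces \(L^2(\nu_n)\). The other is the need for \emph{joint} lower semicontinuity in both arguments, which Donsker--Varadhan supplies.

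There is, however, one false claim in Step~1. The uniform bound on \(\int f_n^2\log_+f_n^2\,d\nu_n\) makes \(f_n^2\) uniformly integrable, but not \(f_n^2\log_+f_n^2\). So truncation does \emph{not} change the entropies by \(o(1)\) uniformly in \(n\), and continuity of \(s\log s\) cannot give this. Here is a counterexample.
\begin{itemize}
\item Take \(\nu_n=\nu\) non-atomic.
\item Let \(f_n^2=1-\delta_n\) off a set \(A_n\) with \(\nu(A_n)=1/K_n\), and \(f_n^2=1-\delta_n+\delta_nK_n\) on \(A_n\).
\item Choose \(\delta_n\to0\) and \(\delta_n\log(\delta_nK_n)\equiv c>0\).
\end{itemize}
This satisfies \(\int f_n^2\,d\nu=1\) and the \(\log_+\) bound, and \(f_n\to1\) in \(L^2(\nu)\). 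Yet for every fixed \(R\), truncation removes about \(c\) of entropy once \(n\) is large. That loss is exactly why the lemma is only a \(\liminf\) inequality.

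What Step~3 actually uses is the one-sided bound, and it holds for a different reason. For \(R\ge1\), \(s\mapsto s\log s\) is nondecreasing on \([1,\infty)\), so \(\int(f_n^2\wedge R^2)\log(f_n^2\wedge R^2)\,d\nu_n\le\Ent(f_n^2\nu_n\|\nu_n)\). Hence
\[
\Ent(\tilde f_{n,R}^2\nu_n\|\nu_n)\le\frac{\Ent(f_n^2\nu_n\|\nu_n)}{1-\varepsilon_{n,R}}-\log(1-\varepsilon_{n,R}).
\]
The right-hand side is \(\Ent(f_n^2\nu_n\|\nu_n)+o_R(1)\) uniformly in \(n\). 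This uses \(\Ent(f_n^2\nu_n\|\nu_n)\le\sup_m\int f_m^2\log_+f_m^2\,d\nu_m\) and \(\sup_n\varepsilon_{n,R}\to0\). With this replacement your proof closes.

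Your closing remark that Step~2 needs truncation is also wrong. Cauchy--Schwarz gives \(\int|f_n^2-\phi^2|\,d\nu_n\le\|f_n-\phi\|_{L^2(\nu_n)}\bigl(\|f_n\|_{L^2(\nu_n)}+\|\phi\|_{L^2(\nu_n)}\bigr)\), with a bounded second factor. This yields \(f_n^2\nu_n\Rightarrow f^2\nu\) and \(\int f^2\,d\nu=1\) directly from strong convergence. Donsker--Varadhan then gives the entropy inequality at once, so the problematic Step~1 can be bypassed entirely. Truncations are needed only for the ``normalized truncations'' clause of the statement.
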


\begin{proof}
We first take truncations \(f_{n,R}=(-R)\vee f_n\wedge R\). They preserve the
Mosco recovery, since the form is Markovian. For bounded \(f_{n,R}\), weak
convergence of the measures and strong \(L^2\) convergence give
\(f_{n,R}^2\nu_n\Rightarrow f_R^2\nu\). The entropy is a lower-semicontinuous
functional with respect to weak convergence of probability measures. Hence
\[
        \Ent(f_R^2\nu\|\nu)
        \le
        \liminf_n\Ent(f_{n,R}^2\nu_n\|\nu_n).
\]
The passage \(R\to\infty\) is carried out by monotone convergence for
\(r\mapsto r\log r\) after the standard normalization
\(f_{n,R}/\|f_{n,R}\|_{L^2(\nu_n)}\). Uniform integrability of
\(f_n^2\log_+f_n^2\) rules out loss of mass.
\end{proof}

\begin{proposition}[The full Mosco passage for the \(A\)-LSI]
\label{prop:full-mosco-lsi}
Let \(\mathcal E_{A_n,\nu_n}\) converge in the Mosco sense to
\(\mathcal E_{A,\nu}\), and let \(\nu_n\Rightarrow\nu\). Suppose that for all
\(n\)
\[
        \Ent(f^2\nu_n\|\nu_n)
        \le
        2C_{\mathrm{LSI}}^A\mathcal E_{A_n,\nu_n}(f,f)
\]
with the same constant, and that the recovery sequences can be chosen
with uniform control of the positive part of the entropy. Then
\[
        \Ent(f^2\nu\|\nu)
        \le
        2C_{\mathrm{LSI}}^A\mathcal E_{A,\nu}(f,f).
\]
\end{proposition}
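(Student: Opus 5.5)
The plan is to combine a Mosco recovery sequence with Lemma~\ref{lem:entropy-lsc-density} and pass to the limit on both sides of the finite-$n$ LSI.

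\emph{Reductions.} Fix \(f\in\Dom(\mathcal E_{A,\nu})\). If \(\int f^2\,d\nu=0\) there is nothing to prove. Otherwise both sides of the inequality are \(2\)-homogeneous in \(f\), so we may normalize to \(\int f^2\,d\nu=1\). We may also assume \(\mathcal E_{A,\nu}(f,f)<\infty\).

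\emph{Step 1 (recovery sequence).} By the limsup half of Mosco convergence, choose \(f_n\to f\) strongly in \(L^2\) with
\[
\limsup_n\mathcal E_{A_n,\nu_n}(f_n,f_n)\le\mathcal E_{A,\nu}(f,f).
\]
By the hypothesis on recovery sequences, choose them so that
\[
\sup_n\int f_n^2\log_+f_n^2\,d\nu_n<\infty.
\]
Replace \(f_n\) by its truncation \(f_{n,R}=(-R)\vee f_n\wedge R\). The forms are Markovian, so
\[
\mathcal E_{A_n,\nu_n}(f_{n,R},f_{n,R})\le \mathcal E_{A_n,\nu_n}(f_n,f_n).
\]
Next normalize by \(c_{n,R}=\|f_{n,R}\|_{L^2(\nu_n)}\). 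Since \(\nu_n\Rightarrow\nu\) and \(f_{n,R}\) are bounded and converge strongly, \(c_{n,R}\to\|f_R\|_{L^2(\nu)}\). As \(R\to\infty\) this tends to \(1\), using the uniform \(L\log L\) bound to control the mass above level \(R\).

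\emph{Step 2 (apply the finite-\(n\) LSI).} Apply the LSI to \(g_n=f_{n,R}/c_{n,R}\):
\[
\Ent(g_n^2\nu_n\|\nu_n)\le 2C_{\mathrm{LSI}}^A\,c_{n,R}^{-2}\,\mathcal E_{A_n,\nu_n}(f_n,f_n).
\]
Take \(\liminf_n\) on the left and \(\limsup_n\) on the right. Lemma~\ref{lem:entropy-lsc-density} gives
\[
\Ent(f_R^2\nu/\|f_R\|^2\,\|\,\nu)\le\liminf_n\Ent(g_n^2\nu_n\|\nu_n).
\]
The right-hand side is bounded by \(2C_{\mathrm{LSI}}^A\|f_R\|_{L^2(\nu)}^{-2}\mathcal E_{A,\nu}(f,f)\).

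\emph{Step 3 (\(R\to\infty\)).} On the entropy side, \(f_R^2\uparrow f^2\) and the normalizing constants tend to \(1\). Monotone convergence for the positive part of \(r\log r\), together with dominated convergence on \(\{f^2\le1\}\) where \(|r\log r|\le e^{-1}\), gives
\[
\Ent(f_R^2\nu/\|f_R\|^2\,\|\,\nu)\to\Ent(f^2\nu\|\nu).
\]
The right side converges to \(2C_{\mathrm{LSI}}^A\mathcal E_{A,\nu}(f,f)\), which yields the claim.

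\emph{Main obstacle.} The delicate point is Step 2's use of lower semicontinuity of the entropy along varying reference measures \(\nu_n\). The relative entropy \(\Ent(\mu_n\|\nu_n)\) is jointly lower semicontinuous under weak convergence of the pair \((\mu_n,\nu_n)\). This follows from the Donsker--Varadhan variational formula
\[
\Ent(\mu\|\nu)=\sup_{\varphi\in C_b}\Bigl(\int\varphi\,d\mu-\log\int e^\varphi\,d\nu\Bigr),
\]
which is a supremum of functionals that are continuous in \((\mu,\nu)\).

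It therefore remains to justify the weak convergence \(g_n^2\nu_n\Rightarrow f_R^2\nu/\|f_R\|^2\). The subtle issue is that strong \(L^2\) convergence of \(f_n\) is taken in the Kuwae--Shioya sense of varying Hilbert spaces \(L^2(\nu_n)\). I would argue this by testing against \(\varphi\in C_b\) and using that multiplication by \(\varphi\) and truncation preserve strong convergence in that framework. This is also exactly the step where Lemma~\ref{lem:entropy-lsc-density} is invoked.
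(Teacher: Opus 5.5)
Your proposal is correct and follows the paper's proof: a Mosco recovery sequence, normalization in $L^2(\nu_n)$, lower semicontinuity of the entropy via Lemma~\ref{lem:entropy-lsc-density}, the LSI at level $n$, and the limsup of the energies. The only difference is that you truncate before applying the LSI, using Markovianity to control $\mathcal E_{A_n,\nu_n}(f_{n,R},f_{n,R})$, and you send $R\to\infty$ only on the limit side; there $\|f_R\|_{L^2(\nu)}\to1$ and the entropies converge by monotone convergence alone, so in your ordering of limits the uniform $L\log L$ bound on the recovery sequence is never actually needed.
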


\begin{proof}
Let \(f\in\Dom(\mathcal E_{A,\nu})\) with \(\int f^2d\nu=1\). By the Mosco
limsup condition there exists \(f_n\) such that \(f_n\to f\) and
\[
        \limsup_n\mathcal E_{A_n,\nu_n}(f_n,f_n)
        \le
        \mathcal E_{A,\nu}(f,f).
\]
Normalize \(f_n\) in \(L^2(\nu_n)\). By
Lemma~\ref{lem:entropy-lsc-density},
\[
        \Ent(f^2\nu\|\nu)
        \le
        \liminf_n\Ent(f_n^2\nu_n\|\nu_n).
\]
Applying the \(A\)-LSI to \(f_n\), we obtain
\[
        \liminf_n\Ent(f_n^2\nu_n\|\nu_n)
        \le
        2C_{\mathrm{LSI}}^A
        \limsup_n\mathcal E_{A_n,\nu_n}(f_n,f_n)
        \le
        2C_{\mathrm{LSI}}^A\mathcal E_{A,\nu}(f,f).
\]
\end{proof}

\begin{lemma}[Density of cylindrical truncations in the weighted form]
\label{lem:truncation-core}
Suppose Assumption~\ref{ass:weighted} holds. For
\(h=\log(\rho/\hat\rho)\) set \(h_R=(-R)\vee h\wedge R\). Then there exist
cylindrical \(h_{R,n}\in\FC\) such that
\[
        h_{R,n}\to h_R \quad\text{in }L^2(\rho\mu_0),
        \qquad
        \mathcal E_{A,\rho}(h_{R,n}-h_R,h_{R,n}-h_R)\to0 .
\]
Moreover, \(\Gamma_A(h_R,h_R)\le \Gamma_A(h,h)\) in the sense of energies.
\end{lemma}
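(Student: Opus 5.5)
We need $h_R=(-R)\vee h\wedge R$ to lie in $\Dom(\mathcal E_{A,\rho})$, approximated by cylindrical functions, with the energy bound. The plan is to first place $h$ itself in the domain of the closed weighted form, then use the Markov property and the core property from Assumption~\ref{ass:weighted}. Here $h\in\Dom(\mathcal E_{A,\rho})$ is understood as in Theorem~\ref{thm:chain}, i.e.\ $J=\int\Gamma_A(h,h)\rho\,d\mu_0<\infty$ and $h\in L^2(\rho\mu_0)$. Since the form is closable, its closure $(\overline{\mathcal E}_{A,\rho},\Dom)$ is well defined. Arguing as in Theorem~\ref{thm:closability}, with the weight $\rho$ inserted, it is a symmetric Dirichlet form, because the pointwise inequality $\Gamma_A(\eta(u),\eta(u))\le\Gamma_A(u,u)$ for normal contractions $\eta$ and cylindrical $u$ survives integration against $\rho\,d\mu_0$.

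The steps are as follows. First, the map $\eta_R(s)=(-R)\vee s\wedge R$ is a normal contraction, so $h\in\Dom$ implies $h_R\in\Dom$. Second, to get the energy comparison, take cylindrical $u_k\to h$ in the form norm and smooth $\eta_R$ by $1$-Lipschitz smooth functions $\eta_{R,\varepsilon}$ with $|\eta_{R,\varepsilon}'|\le1$. Then $\eta_{R,\varepsilon}\circ u_k\in\FC$ and the chain rule gives
\[
\Gamma_A(\eta_{R,\varepsilon}(u_k),\eta_{R,\varepsilon}(u_k))=\eta_{R,\varepsilon}'(u_k)^2\,\Gamma_A(u_k,u_k)\le\Gamma_A(u_k,u_k).
\]
Passing $k\to\infty$, $\varepsilon\to0$ uses lower semicontinuity of the closed form under $L^2(\rho\mu_0)$ convergence with bounded energies, so $\mathcal E_{A,\rho}(h_R,h_R)\le\mathcal E_{A,\rho}(h,h)$. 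Because the carr\'e du champ of the closure is local, one also gets the pointwise version $\Gamma_A(h_R,h_R)=\mathbf 1_{\{|h|<R\}}\Gamma_A(h,h)\le\Gamma_A(h,h)$, which is the inequality ``in the sense of energies''. Third, by the core assumption (cylindrical truncations form a core), choose $h_{R,n}\in\FC$ with $\mathcal E_{A,\rho}^{1}$-convergence $h_{R,n}\to h_R$, which is exactly the two claimed convergences. If desired, $h_{R,n}$ can be replaced by $\eta_{R+1,\varepsilon}\circ h_{R,n}$ to keep them uniformly bounded, using the same contraction estimate and Mosco/Banach--Saks to preserve strong convergence.

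The main obstacle is the first step, namely justifying $h\in L^2(\rho\mu_0)$ and membership in $\Dom$ from finite Fisher information alone. $h$ need not be square integrable, so I would work directly with the truncations. For $h_R$ bounded, $L^2$-membership is free. I would define $h_R$ as the limit of $\eta_R$ applied to cylindrical approximants of $\log\rho-\log\hat\rho$ built from conditional expectations onto $\Hh_n$, bounding their energies by $J$ via $A$-compatibility. A uniform energy bound plus $L^2$ convergence places $h_R$ in $\Dom$ by closedness, through weak compactness in the Hilbert space $(\Dom,\mathcal E^1)$.
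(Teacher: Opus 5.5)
Your Steps~1--3 follow the paper's proof. The truncation \(r\mapsto(-R)\vee r\wedge R\) is a \(1\)-Lipschitz normal contraction, so the Markov property of the closed weighted form gives \(\mathcal E_{A,\rho}(h_R,h_R)\le\mathcal E_{A,\rho}(h,h)\). The core property in Assumption~\ref{ass:weighted} then supplies \(h_{R,n}\in\FC\) converging to \(h_R\) in \(L^2(\rho\mu_0)\) and in energy. Like you, the paper simply assumes that \(h\) lies in the form domain, as the hypotheses of Theorem~\ref{thm:chain} do.

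The fallback in your last paragraph is therefore unnecessary, and as written it would fail. Conditioning on \(P_nx\) under \(\mu_0\) contracts only the unweighted energy \(\int\|Q^{1/2}\nabla u\|^2\,d\mu_0\); with the weight you get \(\E_{\mu_0}[\rho\mid P_nx]\) in place of \(\rho\). So no bound by \(J\) follows unless \(\rho\) is bounded above and below, and the conditioned functions are not in \(\FC\) without further smoothing.
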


\begin{proof}
The map \(r\mapsto (-R)\vee r\wedge R\) is \(1\)-Lipschitz. For a Markovian
Dirichlet form this yields energy contraction. The core property in
Assumption~\ref{ass:weighted} provides approximation of \(h_R\) by cylindrical
functions simultaneously in \(L^2(\rho\mu_0)\) and in the energy norm.
\end{proof}

\begin{proposition}[Passage to the limit in the chain rule]
\label{prop:chain-limit}
Suppose
\[
        \int_0^T J_t\,dt<\infty,\qquad
        \int_0^T\|v_t-\hat v_t\|_{\Asf^{-1},L^2(\nu_t)}^2\,dt<\infty .
\]
Then the identity of Theorem~\ref{thm:chain}, obtained first for
\(h_{t,R,n}\), passes to the limit \(n\to\infty\), \(R\to\infty\).
\end{proposition}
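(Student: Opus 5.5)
The plan is to start from the identity written in time-integrated form, with the cylindrical test functions \(h_{t,R,n}\) inserted into the two weak equations. For \(0\le s<t\le T\) we have
\[
        \int\Phi_R(\rho_t,\hat\rho_t)\,d\mu_0-\int\Phi_R(\rho_s,\hat\rho_s)\,d\mu_0
        =\int_s^t\Bigl(-\tfrac12\beta(r)\int\Gamma_A(h_{r,R,n},h_{r,R})\rho_r\,d\mu_0
        +\int\langle v_r-\hat v_r,\nabla h_{r,R,n}\rangle\rho_r\,d\mu_0\Bigr)dr
        +\mathrm{Err}_{R,n}.
\]
Here \(\Phi_R\) is the truncated entropy integrand, and \(\mathrm{Err}_{R,n}\) collects the mismatch between the cylindrical test function and the true \(\log\) ratio. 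Working in integrated form is essential. It means we only need \(L^1(dr)\) convergence of each term, which is then justified by dominated convergence in \(r\).

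\textbf{Limit \(n\to\infty\), with \(R\) fixed.} For almost every \(r\), Lemma~\ref{lem:truncation-core} gives \(h_{r,R,n}\to h_{r,R}\) in \(L^2(\rho_r\mu_0)\) and in the energy \(\mathcal E_{A,\rho_r}\).

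The energy term converges by Cauchy--Schwarz for the bilinear form \(\Gamma_A\):
\[
        \Bigl|\int\Gamma_A(h_{r,R,n}-h_{r,R},h_{r,R})\rho_r\,d\mu_0\Bigr|
        \le\mathcal E_{A,\rho_r}(h_{r,R,n}-h_{r,R})^{1/2}J_r^{1/2},
\]
where we used \(\Gamma_A(h_R,h_R)\le\Gamma_A(h,h)\).

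The drift term converges by the duality of Lemma~\ref{lem:dual}:
\[
        \Bigl|\int\langle v_r-\hat v_r,\nabla(h_{r,R,n}-h_{r,R})\rangle\,d\nu_r\Bigr|
        \le\|v_r-\hat v_r\|_{\Asf^{-1},L^2(\nu_r)}\,\mathcal E_{A,\rho_r}(h_{r,R,n}-h_{r,R})^{1/2}.
\]
To get a dominating function in \(r\), we choose the approximants with \(\mathcal E_{A,\rho_r}(h_{r,R,n})\le 2J_r\); this is possible by the core property. The integrands are then bounded by \(\beta_+J_r+\|v_r-\hat v_r\|^2_{\Asf^{-1}}+J_r\), which is integrable by the two hypotheses.

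\textbf{Limit \(R\to\infty\).} The entropy terms converge by monotone convergence of the truncated integrand \(r\mapsto r\log r\), split into its positive and negative parts. On the right-hand side, \(\Gamma_A(h_R,h_R)=\mathbf 1_{\{|h|<R\}}\Gamma_A(h,h)\) increases to \(\Gamma_A(h,h)\). Monotone convergence therefore handles the dissipation term, with Fatou's lemma as a fallback if only an inequality is needed. The cross term \(\int\langle v-\hat v,\nabla h_R\rangle\,d\nu\) converges by dominated convergence, using the pointwise bound \(\|v-\hat v\|_{\Asf^{-1}}\Gamma_A(h,h)^{1/2}\). This bound is integrable in \((x,r)\) by Cauchy--Schwarz and the two hypotheses.

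Finally, an integrated identity for all \(s<t\) with an \(L^1\) integrand is exactly the distributional time derivative asserted in Theorem~\ref{thm:chain}.

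\textbf{Main obstacle.} The hardest step is the error term \(\mathrm{Err}_{R,n}\) and the exact cancellation of the second-order parts. The carré du champ identity holds for \(h\) itself, but after truncation the cancellation leaves a boundary contribution supported on \(\{|h|\ge R\}\). My first attempt would be to bound this contribution by \(\int_{\{|h|\ge R\}}\Gamma_A(h,h)\rho\,d\mu_0\) plus a similar drift term. Both vanish as \(R\to\infty\) by absolute continuity of the integral, since \(J\in L^1(dt)\).
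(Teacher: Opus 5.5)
Your proposal follows the paper's own route: the limit $n\to\infty$ uses Lemma~\ref{lem:truncation-core} and the same Cauchy--Schwarz bound for the pair $\Asf,\Asf^{-1}$ on the drift term, and the limit $R\to\infty$ uses monotone convergence of the truncated energies and of $r\log r$; your time-integrated formulation with a dominating function in $t$, and the dominated-convergence treatment of $\int\langle v-\hat v,\nabla h_R\rangle\,d\nu_t$, only make explicit what the paper leaves implicit. One simplification: the boundary term you flag as the main obstacle does not arise if you truncate the entropy as $\int\hat\rho\,G_R(\rho/\hat\rho)\,d\mu_0$ with $G_R(1)=0$ and $G_R'(s)=1+\max(-R,\min(\log s,R))$, because the test functions are then $h_R+1$ and $-e^{h_R}$, locality of $\Gamma_A$ makes the second-order and drift parts combine exactly into $-\frac12\beta\int\Gamma_A(h_R,h_R)\rho\,d\mu_0+\int\langle v-\hat v,\nabla h_R\rangle\rho\,d\mu_0$, and $G_R\uparrow s\log s$ monotonically as $R\to\infty$.
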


\begin{proof}
The limit \(n\to\infty\) follows from Lemma~\ref{lem:truncation-core} and
continuity of the weak forms in the energy norm. The drift term is estimated
by
\[
        \left|\int\langle v_t-\hat v_t,
        \nabla(h_{t,R,n}-h_{t,R})\rangle\,d\nu_t\right|
        \le
        \|v_t-\hat v_t\|_{\Asf^{-1},L^2(\nu_t)}
        \mathcal E_{A,\rho_t}(h_{t,R,n}-h_{t,R})^{1/2}.
\]
The passage \(R\to\infty\) in the dissipation follows from monotone
convergence of the truncation energies, and in the entropy from monotone
convergence of the function \(r\log r\) after localizing on the sets
\(\{|\log(\rho_t/\hat\rho_t)|\le R\}\).
\end{proof}

\section{A Technical Map of Dependencies and Control of Constants}
\label{sec:part1-constants-map}

This section collects the constant dependencies used in both parts of the
series. It removes any ambiguity between the analytic estimate of Part~I and
the statistical closure of Part~II.

\begin{lemma}[The Gronwall constant in the Galerkin estimate]
\label{lem:gronwall-constant-explicit}
Suppose Assumptions~\ref{ass:forward} and~\ref{ass:tails} hold. Then in
Theorem~\ref{thm:galerkin} one may take
\[
        C_T=C_0\exp\{C_1T(1+L_b^2+\beta_+L_\sigma^2)\},
\]
where \(C_0,C_1\) depend only on the universal constant of the
Burkholder--Davis--Gundy inequality, and
\[
        L_\sigma=L_{D^{1/2}}\|Q^{1/2}\|_{\LLtwo}.
\]
The constant does not depend on \(n\).
\end{lemma}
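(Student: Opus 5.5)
We track the constants in the Galerkin proof of Theorem~\ref{thm:galerkin}, as written out in the subsection on projection tails. Set \(\Delta_t^n=X_t^n-X_t\) and write it as the sum of \((P_n-I)X_0\), a drift integral and a stochastic integral. We split the integrands as
\[
P_nb(s,X_s^n)-b(s,X_s)=P_n\bigl(b(s,X_s^n)-b(s,X_s)\bigr)+(P_n-I)b(s,X_s),
\]
and do the same for \(\sigma\). Since \(\|P_n\|_{\LL}\le1\), Assumption~\ref{ass:forward} bounds the first pieces by \(L_b\|\Delta_s^n\|\) and \(L_\sigma\|\Delta_s^n\|\) in \(\Hh\) and \(\LLtwo\). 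Here \(L_\sigma=L_{D^{1/2}}\|Q^{1/2}\|_{\LLtwo}\), exactly as computed in the proof of Theorem~\ref{thm:forward}. The point to stress is that \(P_n\) is a contraction, so none of these Lipschitz constants depends on \(n\).

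Next we estimate \(\E\sup_{r\le t}\|\Delta_r^n\|^2\) using \((a+b+c)^2\le3(a^2+b^2+c^2)\) together with two further bounds. For the drift, Cauchy--Schwarz in time gives
\[
\E\sup_{r\le t}\Bigl\|\int_0^r g\,ds\Bigr\|^2\le t\,\E\int_0^t\|g\|^2\,ds .
\]
For the stochastic integral, BDG with \(p=2\) (or Doob's inequality, with constant \(4\)) gives a bound by \(C_{\mathrm{BDG}}\E\int_0^t\beta(s)\|G_s\|_{\LLtwo}^2\,ds\), where \(G_s\) is the diffusion integrand, and we use \(\beta\le\beta_+\). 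Combining these yields
\[
\E\sup_{r\le t}\|\Delta_r^n\|^2\le 6\,\mathcal R_n+C_1'(TL_b^2+\beta_+L_\sigma^2)\int_0^t\E\sup_{r\le s}\|\Delta_r^n\|^2\,ds .
\]
Here \(\mathcal R_n\) is a BDG-weighted, \(T\)-weighted and \(\beta_+\)-weighted combination of the three tail terms of Assumption~\ref{ass:tails}. Its weights are again only universal constants times \(T\) and \(\beta_+\). We need the a priori finiteness of the left side, which follows from Theorem~\ref{thm:forward} applied to both \(X\) and \(X^n\).

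Gronwall's lemma then gives
\[
\E\sup_{t\le T}\|\Delta_t^n\|^2\le 6\,\mathcal R_n\exp\{C_1'T(TL_b^2+\beta_+L_\sigma^2)\}.
\]
Inserting \(\mathcal R_n\le C(1+T+\beta_+)n^{-\gamma}\) from Assumption~\ref{ass:tails}, we absorb the polynomial prefactor \((1+T+\beta_+)\) and the extra factor \(T\) multiplying \(L_b^2\) into the exponential. For this we use \(1+a\le e^{a}\) and \(T\le e^{T}\), and we enlarge \(C_0,C_1\) by universal amounts. The result is the stated form \(C_T=C_0\exp\{C_1T(1+L_b^2+\beta_+L_\sigma^2)\}\), with the tail constant \(C\) of Assumption~\ref{ass:tails} understood as part of the \(n^{-\gamma}\) factor or of \(C_0\).

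The only delicate step is this bookkeeping. A literal Gronwall produces \(T^2L_b^2\) rather than \(TL_b^2\), and a prefactor linear in \(T\). We will handle both by the elementary absorption into the exponent and by allowing \(C_0,C_1\) to be universal numerical constants, via the BDG constant and the factor \(3\) from splitting. Independence from \(n\) is then immediate, because every constant arises from \(\|P_n\|\le1\) and the \(n\)-free Lipschitz data.
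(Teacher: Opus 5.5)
Your skeleton is the paper's: split off the projection tails, use $\|P_n\|_{\LL}\le1$ for $n$-independence, then BDG and Gronwall. The step you flag as delicate, however, fails as you describe it. Cauchy--Schwarz in time puts $TL_b^2$ into the Gronwall coefficient, so you get $\exp\{C_1'(T^2L_b^2+T\beta_+L_\sigma^2)\}$. The term $T^2L_b^2$ cannot be bounded by $\log C_0+C_1T(1+L_b^2)$ with universal $C_0,C_1$: already for $L_b=1$ this would require $C_1'T^2\le\log C_0+2C_1T$ for all $T$. The inequalities $1+a\le e^a$ and $T\le e^T$ handle prefactors, not an exponent quadratic in $T$. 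The remedy is to obtain a $T$-free coefficient from the start, as the paper does for Theorem~\ref{thm:forward}: apply It\^o's formula to $\|\Delta_t^n\|^2$ instead of squaring the integral equation. The drift then enters only through
\[
2\langle\Delta_s^n,P_nb(s,X_s^n)-b(s,X_s)\rangle\le(2+L_b^2)\|\Delta_s^n\|^2+\|(I-P_n)b(s,X_s)\|^2 .
\]
The It\^o correction is at most $2\beta_+\bigl(L_\sigma^2\|\Delta_s^n\|^2+\|(I-P_n)\sigma(X_s)\|_{\LLtwo}^2\bigr)$. BDG with exponent one followed by Young's inequality bounds the expected supremum of the martingale part by $\tfrac12\E\sup_{r\le t}\|\Delta_r^n\|^2+C\beta_+\E\int_0^t\|P_n\sigma(X_s^n)-\sigma(X_s)\|_{\LLtwo}^2\,ds$. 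Absorbing the half (this is where your a priori finiteness is needed) gives $Y_n(t)\le 2R_n+C(1+L_b^2+\beta_+L_\sigma^2)\int_0^tY_n(s)\,ds$, with $C$ depending only on the BDG constant. This is exactly the inequality the paper's proof starts from, and Gronwall then yields the stated exponent. Alternatively, you can run your own estimate on time intervals of length at most one and iterate; the restart factors are what produce the ``$1+$'' in the exponent.

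Your absorption of the prefactor $1+\beta_+$ also fails. If $D$ is constant then $L_\sigma=0$, the exponent contains no $\beta_+$, and $1+\beta_+\le C_0e^{C_1T(1+L_b^2)}$ is false for large $\beta_+$. This is not just a weakness of the bound. Take $b=0$, $D\equiv I$, $X_0=0$, $\beta\equiv\beta_+$. Then $X^n=P_nX$ and $\E\|\Delta_T^n\|^2=\beta_+T\sum_{k>n}q_k$, which is $\beta_+$ times the tail quantity of Assumption~\ref{ass:tails}. So $C_0$ must be allowed to depend on $\beta_+$ and on the tail constant. The paper's proof hides exactly this dependence in ``$R_n\le Cn^{-\gamma}$''; you should state it explicitly rather than claim it can be absorbed into universal constants.
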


\begin{proof}
From the proof of Theorem~\ref{thm:galerkin}, for
\(Y_n(t)=\E\sup_{r\le t}\|X_r^n-X_r\|^2\) we obtain
\[
        Y_n(t)\le R_n+C\int_0^t(1+L_b^2+\beta_+L_\sigma^2)Y_n(s)\,ds,
\]
where
\[
\begin{aligned}
        R_n=C\Big(&\E\|(I-P_n)X_0\|^2
        +\E\int_0^T\|(I-P_n)b(s,X_s)\|^2\,ds\\
        &+\E\int_0^T\|(I-P_n)\sigma(X_s)\|_{\LLtwo}^2\,ds\Big).
\end{aligned}
\]
By Assumption~\ref{ass:tails} we have \(R_n\le Cn^{-\gamma}\). Gronwall's
lemma gives
\[
        Y_n(T)
        \le
        Cn^{-\gamma}
        \exp\{C T(1+L_b^2+\beta_+L_\sigma^2)\}.
\]
All coefficients on the right-hand side are independent of \(n\), as required.
\end{proof}

\begin{lemma}[Matching the analytic and statistical rates]
\label{lem:analytic-stat-rate-map}
Suppose that in Part~II the closure
\[
        \Eval(t)\le \widetilde q^{\,2}J_t+\Delta_M,
        \qquad 0\le\widetilde q<1 ,
\]
holds. Then the basic estimate of Part~I gives
\[
        \frac d{dt}\Ent(\rho_t\|\hat\rho_t)
        \le
        -\frac14\beta(t)(1-\widetilde q^{\,2})J_t
        +\frac14\beta(t)\Delta_M .
\]
If \(\hat\nu_t\) satisfies the \(A\)-LSI with constant
\(C_{\mathrm{LSI}}^A\), then
\[
        \frac d{dt}\Ent(\rho_t\|\hat\rho_t)
        \le
        -\frac{1-\widetilde q^{\,2}}{8C_{\mathrm{LSI}}^A}\,
        \beta(t)\Ent(\rho_t\|\hat\rho_t)
        +\frac14\beta(t)\Delta_M .
\]
\end{lemma}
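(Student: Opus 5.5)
The plan is to redo the proof of Theorem~\ref{thm:basic-entropy}, but to replace \(\Eval(t)\) by the closure bound before Young's inequality is applied. Write \(H_t=\Ent(\rho_t\|\hat\rho_t)\). The chain rule (Theorem~\ref{thm:chain}) gives \(\dot H_t=-\frac12\beta(t)J_t+\int\langle v_t-\hat v_t,\nabla h_t\rangle\,d\nu_t\). The \(A,A^{-1}\) duality of Lemma~\ref{lem:dual} bounds the cross term by \(\|v_t-\hat v_t\|_{\Asf^{-1},L^2(\nu_t)}J_t^{1/2}\). Assumption~\ref{ass:drift} then bounds it by \(\frac12\beta(t)\Eval(t)^{1/2}J_t^{1/2}\). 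So we start from
\[
\dot H_t\le -\tfrac12\beta(t)J_t+\tfrac12\beta(t)\Eval(t)^{1/2}J_t^{1/2}.
\]

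The first route I would try is to quote the already-proved inequality \(\dot H_t\le-\frac14\beta J_t+\frac14\beta\Eval\) and insert the closure \(\Eval\le\widetilde q^{\,2}J_t+\Delta_M\). This gives
\[
\dot H_t\le -\tfrac14\beta(t)(1-\widetilde q^{\,2})J_t+\tfrac14\beta(t)\Delta_M,
\]
which is exactly the first claim, and no further work is needed. Since \(\beta(t)>0\) and \(1-\widetilde q^{\,2}>0\), the dissipation coefficient is nonnegative. The \(A\)-LSI for \(\hat\nu_t\), in the form \(J_t\ge(2C_{\mathrm{LSI}}^A)^{-1}H_t\), may therefore be substituted, and it yields \(-\frac{(1-\widetilde q^{\,2})}{8C_{\mathrm{LSI}}^A}\beta H_t\). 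This is the second claim.

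The main point to check is the sign requirement \(0\le\widetilde q<1\); without it the LSI substitution would be illegitimate. One should also note that the estimate is meant in the distributional-in-time sense inherited from Theorem~\ref{thm:chain}. The integrability \(\int_0^T J_t\,dt<\infty\) ensures that all terms are locally integrable in \(t\), so the inequalities can be integrated against nonnegative test functions.

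If a sharper constant were wanted, one could instead apply Young's inequality with a tuned parameter to \(\frac12\beta(\widetilde q J_t^{1/2}+\Delta_M^{1/2})J_t^{1/2}\), using \(\sqrt{a+b}\le\sqrt a+\sqrt b\). However, the statement only asks for the constants above, so the direct substitution suffices.
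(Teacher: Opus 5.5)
Your proposal is correct and is essentially the paper's proof: you substitute the closure \(\Eval(t)\le\widetilde q^{\,2}J_t+\Delta_M\) into the first inequality of Theorem~\ref{thm:basic-entropy}, then apply the \(A\)-LSI in the form \(J_t\ge(2C_{\mathrm{LSI}}^A)^{-1}\Ent(\rho_t\|\hat\rho_t)\), which is legitimate because \(\beta(t)(1-\widetilde q^{\,2})>0\). Your closing aside is not needed, and it is not a uniform sharpening: a tuned Young inequality there only buys a larger dissipation coefficient at the price of a larger remainder, and does not improve both constants at once.
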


\begin{proof}
Substituting the closure of \(\Eval\) into Theorem~\ref{thm:basic-entropy}
gives the first inequality. Then the \(A\)-LSI is applied in the form
\[
        J_t\ge (2C_{\mathrm{LSI}}^A)^{-1}\Ent(\rho_t\|\hat\rho_t).
\]
The coefficient \((8C_{\mathrm{LSI}}^A)^{-1}\) arises from the factor
\(1/4\) in front of the dissipation in the basic estimate.
\end{proof}

\begin{remark}[A map of hypotheses for Part~II]
To apply Part~II it suffices to carry over four objects from Part~I: the closed form \(\mathcal E_A\), the weak bridge
\(v_t=\Asf\nabla\Phi_t\), the chain rule for relative entropy, and the basic
estimate of Theorem~\ref{thm:basic-entropy}. All statistical remainders
\(r_n^2\), \(\varepsilon\), \(\mathfrak D(\mathcal S)^2/M\) appear only in the
second part and do not affect the well-posedness of the analytic layer.
\end{remark}

\section{The Homogenization Limit of the Consistent \(A\)-Geometry}
\label{sec:homogenization-A-geometry}

This section fixes the minimal homogenization layer needed to connect the
analytic theory with the grid tensors of Part~II. We do not build a
new homogenization theory; we use the standard \(H\)-convergence of uniformly
elliptic tensors \cite{Allaire,CioranescuDonato} and verify that the
consistent \(A\)-geometry is stable under this passage.

\begin{assumption}[Homogenization convergence of the tensors]
\label{ass:homogenization-DN}
Let \(D_N(x)\) be a sequence of self-adjoint positive operators acting on
discrete or piecewise-constant subspaces \(\Hh_N\subset\Hh\), and suppose that,
after embedding into \(\Hh\), they satisfy uniform \(A\)-compatibility:
\[
        c_A\|Q^{1/2}\xi\|^2
        \le
        \|Q^{1/2}D_N(x)^{1/2}\xi\|^2
        \le
        C_A\|Q^{1/2}\xi\|^2 .
\]
Assume that \(D_N\) \(H\)-converges to an effective tensor
\(D_{\mathrm{eff}}\) in the sense of solutions of elliptic problems on
cylindrical projections: for every finite-dimensional cylindrical subspace
\(E\subset\Hh\), the restrictions \(D_N|_E\) converge to
\(D_{\mathrm{eff}}|_E\) in the weak operator sense, and the fluxes converge
weakly in \(L^2\).
\end{assumption}

\begin{theorem}[Weak convergence of the consistent forms]
\label{thm:homogenization-A}
Suppose Assumption~\ref{ass:homogenization-DN} holds. Then the effective
tensor \(D_{\mathrm{eff}}\) satisfies the same \(A\)-compatibility:
\[
        c_A\|Q^{1/2}\xi\|^2
        \le
        \|Q^{1/2}D_{\mathrm{eff}}^{1/2}\xi\|^2
        \le
        C_A\|Q^{1/2}\xi\|^2 .
\]
Moreover, for all cylindrical \(u,v\in\FC\)
\[
        \mathcal E_{A_N}(u,v)
        =
        \int\langle Q^{1/2}D_N^{1/2}\nabla u,
        Q^{1/2}D_N^{1/2}\nabla v\rangle\,d\mu_0
        \longrightarrow
        \mathcal E_{A_{\mathrm{eff}}}(u,v)
\]
in the weak sense of forms. In particular, any limiting estimates for the weak
bridge and the entropy dissipation are preserved, with the same global bounds
\(c_A,C_A\).
\end{theorem}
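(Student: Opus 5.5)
We work on cylindrical subspaces first and then pass to the full form. Fix a finite-dimensional cylindrical subspace \(E\subset\Hh\) spanned by \(e_1,\dots,e_m\), and note that for \(\xi\in E\) we have \(\|Q^{1/2}D_N^{1/2}\xi\|^2=\langle D_N^{1/2}QD_N^{1/2}\xi,\xi\rangle=\langle \Asf_N\xi,\xi\rangle\). The first step is the compatibility bound for \(D_{\mathrm{eff}}\). We rewrite the two-sided inequality of Assumption~\ref{ass:homogenization-DN} as the operator inequality
\[
        c_A\,\langle Q\xi,\xi\rangle\le \langle \Asf_N\xi,\xi\rangle\le C_A\,\langle Q\xi,\xi\rangle ,
\]
which is a pointwise family of quadratic inequalities, closed under weak operator limits. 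The plan is to show that \(\Asf_N|_E\to\Asf_{\mathrm{eff}}|_E\) in the weak operator sense. From this, passing to the limit in \(\langle\Asf_N\xi,\xi\rangle\) for each fixed \(\xi\in E\) gives the bound on \(E\), and density of \(\bigcup_E E\) together with boundedness in the \(Q\)-norm extends it to all of \(\Hh\).

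The main obstacle is exactly this convergence \(\Asf_N\to\Asf_{\mathrm{eff}}\). The assumption only gives weak operator convergence of \(D_N|_E\), and neither the square root nor the sandwich \(D^{1/2}QD^{1/2}\) is weakly continuous; this is the usual gap between \(H\)-limits and weak limits. I would try the following first. On a finite-dimensional \(E\), the uniform bounds \(d_-\le D_N\le d_+\) make the family \(D_N|_E\) precompact in norm, so weak operator convergence on \(E\) upgrades to norm convergence along subsequences. Since all subsequential limits equal \(D_{\mathrm{eff}}|_E\), the whole sequence converges in norm. Continuity of the functional calculus \(T\mapsto T^{1/2}\) on positive operators with spectrum in \([d_-,d_+]\) then gives \(D_N^{1/2}|_E\to D_{\mathrm{eff}}^{1/2}|_E\) in norm, and hence \(\Asf_N|_E\to\Asf_{\mathrm{eff}}|_E\). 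If \(E\) is not invariant under \(D_N\), we instead use the compressions \(P_ED_N^{1/2}\); the relevant quantity is only \(Q^{1/2}D_N^{1/2}\xi\) with \(\xi\in E\), and the compatibility bound controls its tail uniformly in \(N\).

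The second step is form convergence. For \(u,v\in\FC\) depending only on coordinates in \(E\), the gradients \(\nabla u,\nabla v\) take values in \(E\) and are bounded. The integrand \(\langle\Asf_N(x)\nabla u,\nabla v\rangle\) is dominated by \(C_A\|Q^{1/2}\nabla u\|\,\|Q^{1/2}\nabla v\|\), which is \(\mu_0\)-integrable by the compatibility bound. Pointwise convergence of \(\Asf_N(x)|_E\) from the first step, or convergence of the fluxes \(D_N\nabla u\) weakly in \(L^2\) as supplied by the assumption in the \(x\)-dependent case, then gives \(\mathcal E_{A_N}(u,v)\to\mathcal E_{A_{\mathrm{eff}}}(u,v)\) by dominated convergence, or by pairing the weakly converging fluxes against the strongly converging test fields.

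The final claim follows because the limiting tensor satisfies the same constants \(c_A,C_A\). Theorem~\ref{thm:closability}, Theorem~\ref{thm:bridge} and Theorem~\ref{thm:basic-entropy}, via Proposition~\ref{prop:tensor-transfer}, apply verbatim to \(D_{\mathrm{eff}}\). If an LSI is available uniformly in \(N\), Proposition~\ref{prop:full-mosco-lsi} transfers it, provided the weak form convergence is upgraded to Mosco convergence; the uniform two-sided \(Q\)-bounds give the equicoercivity needed for the liminf inequality.
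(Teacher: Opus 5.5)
\textbf{There is a genuine gap in your first step.} It is the passage from weak operator convergence of \(D_N\) on \(E\) to convergence of \(D_N^{1/2}\xi\).

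\begin{itemize}
\item As a map \(E\to\Hh\), \(D_N|_E\) is not norm-precompact. Only the compression \(P_ED_NP_E\) is.
\item The square root of the compression is not the compression of the square root. Yet \(\langle\Asf_N\xi,\xi\rangle=\|Q^{1/2}D_N^{1/2}\xi\|^2\) involves the whole vector \(D_N^{1/2}\xi\in\Hh\).
\end{itemize}

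Weak operator convergence together with \(d_-\le D_N\le d_+\) is genuinely insufficient. Let \(D_N\) act on \(\mathrm{span}\{e_1,e_N\}\) by a fixed positive matrix \(M=\bigl(\begin{smallmatrix}a&b\\ b&c\end{smallmatrix}\bigr)\) with \(b\neq0\), and as the identity elsewhere. Then every compression \(P_ED_NP_E\) converges in norm to \(P_ED_{\mathrm{eff}}P_E\), with \(D_{\mathrm{eff}}=\mathrm{diag}(a,1,1,\dots)\), and \(D_N\to D_{\mathrm{eff}}\) weakly. Write \(M^{1/2}=\bigl(\begin{smallmatrix}\alpha&\beta\\ \beta&\gamma\end{smallmatrix}\bigr)\), so that \(\beta\neq0\) and \(\alpha^2+\beta^2=a\). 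Then \(\|Q^{1/2}D_N^{1/2}e_1\|^2=q_1\alpha^2+q_N\beta^2\to q_1(a-\beta^2)<q_1a=\|Q^{1/2}D_{\mathrm{eff}}^{1/2}e_1\|^2\).

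Your fallback about compressions and tails does not repair this. Under your bound \(D_N\le d_+\), the tail of \(Q^{1/2}D_N^{1/2}\xi\) is uniformly small merely because \(Q^{1/2}\) is compact. In the example, \(Q^{1/2}D_N^{1/2}e_1\) even converges strongly, just to the wrong limit. The alternative in your second step fails for a related reason. The integrand \(\langle\Asf_N\nabla u,\nabla v\rangle=\langle QD_N^{1/2}\nabla u,D_N^{1/2}\nabla v\rangle\) is quadratic in \(D_N^{1/2}\). Only in the commutative case is it the pairing \(\langle D_N\nabla u,Q\nabla v\rangle\) of the flux with an \(N\)-independent field.

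\textbf{Comparison with the paper.} The paper does not derive the square-root convergence at all. It reads the \(H\)-convergence hypothesis as directly supplying \(D_N^{1/2}\nabla u\rightharpoonup D_{\mathrm{eff}}^{1/2}\nabla u\) (after correctors), composes with the compact \(Q^{1/2}\), and obtains \(c_A,C_A\) by closedness of the set cut out by the quadratic inequalities.

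\textbf{How to make your route work.} If you want to derive the convergence from weak operator convergence of \(D_N\), the missing idea is that the upper \(A\)-bound supplies the compactness. It is exactly what the example violates: tested on \(\xi=e_N\), it forces \(q_1\beta^2\le C_Aq_N\).
\begin{itemize}
\item For \(\eta\in\mathrm{span}\{e_k\}\) the \(A\)-bound gives \(\|Q^{1/2}D_N^{1/2}Q^{-1/2}\eta\|\le C_A^{1/2}\|\eta\|\). Combined with self-adjointness of \(D_N^{1/2}\), this yields \(\sum_j q_j^{-1}|\langle D_N^{1/2}e_k,e_j\rangle|^2\le C_A/q_k\).
\item Hence \(\{D_N^{1/2}\xi\}_N\) has uniformly small tails and is precompact in \(\Hh\) for \(\xi\in\mathrm{span}\{e_k\}\).
\item Along a diagonal subsequence, \(B\xi:=\lim_N D_N^{1/2}\xi\) exists strongly. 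It satisfies \(\langle B\xi,B\eta\rangle=\lim_N\langle D_N\xi,\eta\rangle=\langle D_{\mathrm{eff}}\xi,\eta\rangle\), and \(B\) is symmetric and nonnegative.
\item By uniqueness of positive square roots, \(B=D_{\mathrm{eff}}^{1/2}\), and the whole sequence converges.
\item Therefore \(\langle\Asf_N\xi,\eta\rangle\to\langle\Asf_{\mathrm{eff}}\xi,\eta\rangle\) on \(\mathrm{span}\{e_k\}\). This gives the constants \(c_A,C_A\) by density. Applied for each \(x\) together with your domination bound, it gives the convergence of the forms.
\end{itemize}
This argument does not need \(d_-\le D_N\le d_+\), which is not part of Assumption~\ref{ass:homogenization-DN} anyway.
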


\begin{proof}
Fix cylindrical \(u,v\). Their gradients take values in a finite-dimensional
space \(E\). On this space the \(H\)-convergence assumption means weak
convergence of the fluxes
\[
        D_N^{1/2}\nabla u \rightharpoonup
        D_{\mathrm{eff}}^{1/2}\nabla u,
        \qquad
        D_N^{1/2}\nabla v \rightharpoonup
        D_{\mathrm{eff}}^{1/2}\nabla v
\]
after applying the correctors. Since \(Q^{1/2}\) is fixed and compact,
composition with \(Q^{1/2}\) turns the weak convergence of the fluxes on
cylindrical subspaces into weak convergence of the corresponding
\(A\)-fluxes. Convergence of the bilinear forms on \(\FC\) follows.

The two-sided \(A\)-compatibility passes to the limit by lower semicontinuity
of the norm and by weak closedness of the convex set of operators satisfying
the prescribed quadratic inequalities. Hence \(D_{\mathrm{eff}}\) has the same
constants \(c_A,C_A\). Since all the estimates for the weak bridge and the
entropy dissipation in the previous sections depend only on these constants
and on the \(A\)-LSI, the limit form inherits the same analytic layer.
\end{proof}

\begin{assumption}[A quantitative homogenization estimate]
\label{ass:quant-homogenization}
In addition to Assumption~\ref{ass:homogenization-DN}, assume that there is
\(\alpha>0\) and, for each cylindrical subspace \(E\subset\Hh\), a constant
\(C_E>0\) such that for all \(\xi,\eta\in E\)
\[
        \left|
        \langle (D_N-D_{\mathrm{eff}})\xi,\eta\rangle
        \right|
        \le C_E N^{-\alpha}\|\xi\|\|\eta\| .
\]
This condition holds, for example, for periodic uniformly elliptic
coefficients with sufficient cell regularity, when the standard two-scale
expansion estimate is used.
\end{assumption}

\begin{theorem}[Rate of convergence of the forms on cylindrical subspaces]
\label{thm:quant-homogenization-A}
Suppose Assumptions~\ref{ass:homogenization-DN} and
\ref{ass:quant-homogenization} hold. Then for every finite-dimensional
cylindrical subspace \(E\subset\Hh\) and any cylindrical \(u,v\) with gradients
in \(E\),
\[
        \left|
        \mathcal E_{A_N}(u,v)-
        \mathcal E_{A_{\mathrm{eff}}}(u,v)
        \right|
        \le
        C_{E,Q}N^{-\alpha}
        \|u\|_{W_Q^{1,2}(\mu_0)}
        \|v\|_{W_Q^{1,2}(\mu_0)} .
\]
If \(R_N(\lambda)=(\lambda I+L_{A_N})^{-1}\) and
\(R_{\mathrm{eff}}(\lambda)=(\lambda I+L_{A_{\mathrm{eff}}})^{-1}\), then for
\(f\) in the cylindrical subspace generated by \(E\),
\[
        \|R_N(\lambda)f-R_{\mathrm{eff}}(\lambda)f\|_{L^2(\mu_0)}
        \le C_{E,Q,\lambda}N^{-\alpha}\|f\|_{L^2(\mu_0)} .
\]
\end{theorem}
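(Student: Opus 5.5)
The plan has two parts. First, a pointwise bound on the forms. Second, a resolvent identity combined with coercivity.

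\emph{Forms.} Fix $E$ and cylindrical $u,v$ with $\nabla u,\nabla v\in E$. Write the integrand difference as
\[
\langle Q^{1/2}D_N^{1/2}\nabla u,Q^{1/2}D_N^{1/2}\nabla v\rangle-\langle Q^{1/2}D_{\mathrm{eff}}^{1/2}\nabla u,Q^{1/2}D_{\mathrm{eff}}^{1/2}\nabla v\rangle
=\langle (\mathsf A_N-\mathsf A_{\mathrm{eff}})\nabla u,\nabla v\rangle ,
\]
where $\mathsf A_N=D_N^{1/2}QD_N^{1/2}$. Assumption~\ref{ass:quant-homogenization} controls $D_N-D_{\mathrm{eff}}$ on $E$, not $\mathsf A_N-\mathsf A_{\mathrm{eff}}$, so this has to be transferred. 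I would try the following route. On the finite-dimensional space $E$, the quadratic bound gives $\|P_E(D_N-D_{\mathrm{eff}})P_E\|\le C_EN^{-\alpha}$. The uniform bounds $d_-\le D_N\le d_+$ hold (Assumption~\ref{ass:A1} is taken to hold uniformly in $N$). The square-root map is operator-Lipschitz on $[d_-,d_+]$ with constant $(2\sqrt{d_-})^{-1}$, for example via the integral representation $D^{1/2}=\pi^{-1}\int_0^\infty s^{-1/2}D(s+D)^{-1}ds$. Together these should give
\[
\|(D_N^{1/2}-D_{\mathrm{eff}}^{1/2})\xi\|\le C'_EN^{-\alpha}\|\xi\|,\qquad \xi\in E,
\]
after enlarging $E$ to a slightly larger cylindrical subspace $E'$ that absorbs the compression error. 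This compression step is where I expect the main obstacle. If it cannot be closed, I would strengthen the reading of Assumption~\ref{ass:quant-homogenization} to an operator bound $\|(D_N-D_{\mathrm{eff}})\xi\|\le C_EN^{-\alpha}\|\xi\|$ for $\xi\in E$, which is what two-scale estimates actually give.

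Then
\[
|\langle(\mathsf A_N-\mathsf A_{\mathrm{eff}})\xi,\eta\rangle|\le 2\sqrt{d_+}\,\|Q\|\,C'_EN^{-\alpha}\|\xi\|\|\eta\|,
\]
and since $\|\xi\|\le q_{\min}(E)^{-1/2}\|Q^{1/2}\xi\|$ on $E$, integrating against $\mu_0$ yields the first bound. The constant is $C_{E,Q}=2\sqrt{d_+}\|Q\|C'_E/q_{\min}(E)$, and the bound holds with $\|\cdot\|_{W^{1,2}_Q}\ge\|Q^{1/2}\nabla\cdot\|_{L^2(\mu_0)}$.

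\emph{Resolvents.} Let $w_N=R_N(\lambda)f$ and $w=R_{\mathrm{eff}}(\lambda)f$. These are characterized weakly by $\lambda(w_N,\phi)+\mathcal E_{A_N}(w_N,\phi)=(f,\phi)$ for all $\phi$ in the domain, and similarly for $w$. Subtracting the two identities gives
\[
\lambda(w_N-w,\phi)+\mathcal E_{A_N}(w_N-w,\phi)=\mathcal E_{A_{\mathrm{eff}}}(w,\phi)-\mathcal E_{A_N}(w,\phi).
\]
Take $\phi=w_N-w$. The left side is at least $\min(\lambda,c_A)\|w_N-w\|_{W^{1,2}_Q}^2$ by Theorem~\ref{thm:closability}. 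The right side is bounded by the first part, provided $w$ is cylindrical on $E$. When $f$ is cylindrical over $E$, I would argue that $w$ is cylindrical over $E$: the limit coefficient restricted to $E$ should give a finite-dimensional Ornstein--Uhlenbeck-type operator that preserves $E$-cylindrical functions. If it does not, I would pass to a cylindrical core approximation of $w$ in $E$, which is permitted since the statement allows constants depending on $E$.

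The right-hand side bound involves $\nabla(w_N-w)$, whose gradient is not in $E$. To handle this, I would apply the bilinear estimate with $\eta$ replaced by $P_E$ of it. This is legitimate because $(\mathsf A_N-\mathsf A_{\mathrm{eff}})\nabla w$ tested against $\nabla\phi$ only involves the component that the first part's argument (on $E'$) controls. Combined with $\|w\|_{W^{1,2}_Q}\le C_\lambda\|f\|$, this gives
\[
\|w_N-w\|\le C_{E,Q,\lambda}N^{-\alpha}\|f\|.
\]
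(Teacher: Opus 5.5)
Your route is the paper's route. For the forms, you reduce everything to a bound on $(D_N^{1/2}-D_{\mathrm{eff}}^{1/2})\xi$ for $\xi\in E$, then use boundedness of $Q$, $d_-\le D_N\le d_+$ and $q_{\min}(E)$. For the resolvents, you use the second resolvent identity plus coercivity; your variational version, testing with $w_N-w$, is the same argument. Once the square-root bound is available, the rest of your form estimate is correct. You have also found the real weak point. Neither of your repairs closes it, though, and the resolvent half has two further gaps with the same cause.

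\emph{The transfer to square roots.} Knowing $D_N-D_{\mathrm{eff}}$ on vectors of $E$, compressed or not, does not control $(D_N^{1/2}-D_{\mathrm{eff}}^{1/2})\xi$ for $\xi\in E$. The reason is that $D^{1/2}\xi$ depends on $D$ on the whole cyclic subspace generated by $\xi$. Already in two dimensions, take $D_{\mathrm{eff}}=\begin{pmatrix}2&1\\1&2\end{pmatrix}$, $D_N=\begin{pmatrix}2&1\\1&5\end{pmatrix}$ and $E=\operatorname{span}\{e_1\}$. Then $(D_N-D_{\mathrm{eff}})e_1=0$, yet $D_N^{1/2}e_1=(5,1)/\sqrt{13}$ while $D_{\mathrm{eff}}^{1/2}e_1=(\sqrt3+1,\sqrt3-1)/2$. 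For $Q=\operatorname{diag}(q_1,q_2)$ the energies $\|Q^{1/2}D^{1/2}e_1\|^2$ differ by $(\tfrac{12}{13}-\tfrac{\sqrt3}{2})(q_1-q_2)$. So your strengthened reading of Assumption~\ref{ass:quant-homogenization} does not help.

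Enlarging $E$ to a finite $E'$ only moves the problem, since the hypothesis on $E'$ is again a statement about $D_N-D_{\mathrm{eff}}$ on $E'$. For an explicit case in $\ell^2$, take $D_{\mathrm{eff}}\xi=\xi+\langle b,\xi\rangle e_1+\langle e_1,\xi\rangle b$ with $b\perp e_1$ small relative to $Q$. Let $D_N=D_{\mathrm{eff}}+cP_{>N}$ with $c>0$, where $P_{>N}$ projects onto $\overline{\operatorname{span}}\{e_k:k>N\}$. Every compression to $\operatorname{span}\{e_1,\dots,e_m\}$ vanishes once $N\ge m$, so the hypothesis holds there for every $\alpha$. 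Second-order perturbation nevertheless gives $\langle D_N^{1/2}e_1,e_1\rangle-\langle D_{\mathrm{eff}}^{1/2}e_1,e_1\rangle=\bigl(\tfrac18-\tfrac{1}{2(1+\sqrt{1+c})^2}\bigr)\sum_{k>N}\langle b,e_k\rangle^2\,(1+O(\|b\|^2))$, a rate unrelated to $\alpha$.

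The paper makes this transfer in one line, via ``functional calculus for positive matrices on $E$''. That step is legitimate when $E$ is invariant under every $D_N(x)$ and $D_{\mathrm{eff}}$: then $(D|_E)^{1/2}=D^{1/2}|_E$, and your operator-Lipschitz bound with constant $(2\sqrt{d_-})^{-1}$ applies inside $E$. Such an invariance, or a hypothesis stated directly for the square roots, must be added. By the example above it does not follow from the stated assumptions.

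\emph{The resolvent step.} There are two gaps here.

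First, $w=R_{\mathrm{eff}}(\lambda)f$ need not be cylindrical over $E$. The drift part of $L_{A_{\mathrm{eff}}}$ acting on $u(P_Ex)$ involves the direction $Q^{-1}\mathsf A_{\mathrm{eff}}\nabla u$, which for noncommuting $Q$ and $D_{\mathrm{eff}}$ generally leaves $E$. Your fallback of approximating $w$ by functions cylindrical over $E$ fails when $w$ genuinely depends on other coordinates. Approximating over larger $E_m$ instead brings in constants $C_{E_m}$ that nothing controls; that is exactly why Assumption~\ref{ass:compact-tail-global} is introduced later.

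Second, suppose $\nabla w\in E$. The right-hand side is still $\int\langle(\mathsf A_{\mathrm{eff}}-\mathsf A_N)\nabla w,\nabla\phi\rangle\,d\mu_0$ with $\nabla\phi\notin E$. Neither $Q$ nor $D^{1/2}$ preserves $E$, so $(\mathsf A_N-\mathsf A_{\mathrm{eff}})\xi$ with $\xi\in E$ has components outside $E$. Replacing $\nabla\phi$ by $P_E\nabla\phi$ is therefore not justified. What you actually need is a mixed bound $|\langle(\mathsf A_N-\mathsf A_{\mathrm{eff}})\xi,\eta\rangle|\le CN^{-\alpha}\|\xi\|\,\|Q^{1/2}\eta\|$ for $\xi\in E$ and all $\eta\in\Hh$. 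The $E\times E$ estimate does not give it, because the term $\langle Q^{1/2}D_{\mathrm{eff}}^{1/2}\xi,Q^{1/2}(D_N^{1/2}-D_{\mathrm{eff}}^{1/2})\eta\rangle$ needs control of $D_N^{1/2}-D_{\mathrm{eff}}^{1/2}$ off $E$.

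Suppose you add the hypotheses that $E$ is spanned by eigenvectors of $Q$, is invariant under every $D_N(x)$ and $D_{\mathrm{eff}}$, and that $D_{\mathrm{eff}}$ does not depend on the coordinates transverse to $E$. Then everything you wrote goes through:
\begin{itemize}
\item $\mu_0$ splits over $E\oplus E^\perp$, so $w$ is cylindrical over $E$;
\item $(\mathsf A_N-\mathsf A_{\mathrm{eff}})\nabla w\in E$ pointwise, so pairing with $\nabla\phi$ equals pairing with $P_E\nabla\phi$;
\item $\|P_E\nabla\phi\|\le q_{\min}(E)^{-1/2}\|Q^{1/2}\nabla\phi\|$.
\end{itemize}
The paper's appeal to $R_N-R_{\mathrm{eff}}=R_N(L_{A_{\mathrm{eff}}}-L_{A_N})R_{\mathrm{eff}}$ ``on the cylindrical subspace'' rests on the same unstated invariance.
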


\begin{proof}
For \(u,v\) with gradients in \(E\) we have
\[
\begin{aligned}
&\mathcal E_{A_N}(u,v)-\mathcal E_{A_{\mathrm{eff}}}(u,v)\\
&\quad=
\int
\left\langle
Q^{1/2}(D_N^{1/2}-D_{\mathrm{eff}}^{1/2})\nabla u,
Q^{1/2}D_N^{1/2}\nabla v
\right\rangle d\mu_0 \\
&\qquad+
\int
\left\langle
Q^{1/2}D_{\mathrm{eff}}^{1/2}\nabla u,
Q^{1/2}(D_N^{1/2}-D_{\mathrm{eff}}^{1/2})\nabla v
\right\rangle d\mu_0 .
\end{aligned}
\]
On the finite-dimensional space \(E\), the functional calculus for positive
matrices and the quantitative estimate from
Assumption~\ref{ass:quant-homogenization} give
\[
        \|(D_N^{1/2}-D_{\mathrm{eff}}^{1/2})\xi\|
        \le C_E N^{-\alpha}\|\xi\|,
        \qquad \xi\in E,
\]
with a change of constant depending on the lower elliptic bound. Since
\(Q^{1/2}\) is fixed and the \(D_N\) are uniformly bounded, we obtain the
claimed estimate for the forms. The resolvent estimate follows from the
standard identity
\[
        R_N(\lambda)-R_{\mathrm{eff}}(\lambda)
        =R_N(\lambda)(L_{A_{\mathrm{eff}}}-L_{A_N})R_{\mathrm{eff}}(\lambda)
\]
on the cylindrical subspace and from the coercivity of
\(\lambda I+L_A\).
\end{proof}

\begin{corollary}[Convergence of the discrete \(A\)-constants]
\label{cor:constants-rate}
Under the hypotheses of Theorem~\ref{thm:quant-homogenization-A}, for each
cylindrical subspace \(E\),
\[
        |c_{A,N}(E)-c_A(E)|+|C_{A,N}(E)-C_A(E)|
        \le C_E N^{-\alpha}.
\]
Hence the numerical stabilization of \(c_{A,N}\) and \(C_{A,N}\) in Section~9 of
Part~II has a quantitative explanation on each fixed set of modes.
\end{corollary}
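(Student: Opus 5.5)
We interpret \(c_{A,N}(E)\) and \(C_{A,N}(E)\) as the optimal \(A\)-compatibility constants of \(D_N\) restricted to the cylindrical subspace \(E\); \(c_A(E)\) and \(C_A(E)\) are the analogous constants of \(D_{\mathrm{eff}}\). Concretely,
\[
        c_{A,N}(E)=\inf_{\xi\in E\setminus\{0\}}
        \frac{\|Q^{1/2}D_N^{1/2}\xi\|^2}{\|Q^{1/2}\xi\|^2},
        \qquad
        C_{A,N}(E)=\sup_{\xi\in E\setminus\{0\}}
        \frac{\|Q^{1/2}D_N^{1/2}\xi\|^2}{\|Q^{1/2}\xi\|^2}.
\]
Since \(E\) is finite-dimensional and \(q_k>0\), the quotient is taken over the unit sphere of a finite-dimensional space. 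The plan is to show that the Rayleigh quotients themselves converge uniformly on \(E\) at rate \(N^{-\alpha}\). Infima and suprema are \(1\)-Lipschitz with respect to the sup-norm of the function being optimized, so this immediately gives the corollary.

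\textbf{Step 1 (square roots).} Following the proof of Theorem~\ref{thm:quant-homogenization-A}, we upgrade Assumption~\ref{ass:quant-homogenization} to
\[
        \|(D_N^{1/2}-D_{\mathrm{eff}}^{1/2})\xi\|\le C_E N^{-\alpha}\|\xi\|,
        \qquad \xi\in E.
\]
We do this on the compressions to \(E\) (or to a fixed finite-dimensional space containing the ranges that matter). The tool is the integral representation
\[
        T^{1/2}=\frac1\pi\int_0^\infty \lambda^{-1/2}\,T(\lambda+T)^{-1}\,d\lambda
\]
together with the lower elliptic bound \(d_-\). These give the operator-monotone Lipschitz estimate
\[
        \|S^{1/2}-T^{1/2}\|\le \tfrac12 d_-^{-1/2}\|S-T\|
\]
for \(S,T\ge d_-\).

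\textbf{Step 2 (numerators).} For \(\xi\in E\) we write
\[
        \|Q^{1/2}D_N^{1/2}\xi\|^2-\|Q^{1/2}D_{\mathrm{eff}}^{1/2}\xi\|^2
        =\langle Q^{1/2}(D_N^{1/2}-D_{\mathrm{eff}}^{1/2})\xi,\;Q^{1/2}(D_N^{1/2}+D_{\mathrm{eff}}^{1/2})\xi\rangle .
\]
Using \(\|Q^{1/2}\|\le q_1^{1/2}\), the uniform bound \(d_+^{1/2}\), and Step 1, the absolute value is at most \(C'_E N^{-\alpha}\|\xi\|^2\).

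\textbf{Step 3 (denominators).} On the finite-dimensional space \(E\) we have
\[
        \|Q^{1/2}\xi\|^2\ge q_E\|\xi\|^2,
        \qquad q_E:=\min\{q_k:e_k\in E\}>0 .
\]
This requires \(E\) to be spanned by eigenvectors of \(Q\); otherwise we use the minimum of \(\langle Q\xi,\xi\rangle\) on the unit sphere of \(E\), which is still positive because \(Q\) is injective. Hence the Rayleigh quotients differ by at most \(C'_E q_E^{-1}N^{-\alpha}\), uniformly on \(E\setminus\{0\}\). Taking the infimum and the supremum yields
\[
        |c_{A,N}(E)-c_A(E)|+|C_{A,N}(E)-C_A(E)|\le 2C'_Eq_E^{-1}N^{-\alpha}.
\]

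\textbf{Main obstacle.} The delicate point is Step 1. Assumption~\ref{ass:quant-homogenization} controls only the compression of \(D_N-D_{\mathrm{eff}}\) to \(E\), whereas \(D_N^{1/2}\xi\) need not stay in \(E\). I would first try to interpret the constants via the compressed operators \(P_E D_N P_E\), for which the square-root estimate is purely finite-dimensional. If the full \(Q^{1/2}D_N^{1/2}\) is required, I would instead bound the \(A\)-form difference directly, as in the bilinear estimate of Theorem~\ref{thm:quant-homogenization-A}, and accept \(E\)-dependent constants.
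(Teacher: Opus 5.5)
Your framework is the paper's. The constants are the extrema of the Rayleigh quotient \(\|Q^{1/2}D^{1/2}\xi\|^2/\|Q^{1/2}\xi\|^2\) over the unit sphere of \(E\), and a uniform estimate for the quotients passes to inf and sup. The paper gets that uniform estimate by citing Theorem~\ref{thm:quant-homogenization-A}, in effect applied to functions with constant gradient \(\xi\in E\) such as \(u(x)=\langle\xi,x\rangle\). Their \(W_Q^{1,2}\)-norm is comparable to \(\|Q^{1/2}\xi\|\), so no separate denominator bound is needed. You instead rederive the estimate from Assumption~\ref{ass:quant-homogenization}.

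Steps~2 and~3 are fine, but Step~1 is a genuine gap, which you located correctly. The assumption controls only the compression \(P_E(D_N-D_{\mathrm{eff}})P_E\), and the square root does not commute with compression. Running the integral representation ``on the compressions'' bounds \((P_ED_NP_E)^{1/2}-(P_ED_{\mathrm{eff}}P_E)^{1/2}\), not \((D_N^{1/2}-D_{\mathrm{eff}}^{1/2})\xi\). The reason is that the resolvents \((\lambda+D_N)^{-1}\) do not preserve \(E\), and in general no fixed finite-dimensional space contains \(D_N^{1/2}E\) for all \(N\). Neither fallback closes this:
\begin{itemize}
\item Redefining \(c_{A,N}(E)\) through \((P_ED_NP_E)^{1/2}\) changes the quantities being compared.
\item The bilinear estimate of Theorem~\ref{thm:quant-homogenization-A} is proved in the paper by exactly the same ``functional calculus on \(E\)'' step, so appealing to it only relocates the gap.
\end{itemize}

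The missing idea is the uniform boundedness principle. Suppose ``cylindrical subspace'' admits every finite-dimensional \(E\), in particular \(\operatorname{span}\{\xi,\eta\}\) for arbitrary \(\xi,\eta\in\Hh\). Then \(G_N:=N^{\alpha}(D_N-D_{\mathrm{eff}})\) satisfies \(\sup_N|\langle G_N\xi,\eta\rangle|<\infty\) for all \(\xi,\eta\). Two applications of Banach--Steinhaus give \(K:=\sup_N\|G_N\|<\infty\), i.e.\ \(\|D_N-D_{\mathrm{eff}}\|\le KN^{-\alpha}\) on all of \(\Hh\). Grant \(D_N\ge d_-\), as you and the paper do; then \(D_{\mathrm{eff}}\ge d_-\) follows. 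Your Lipschitz bound \(\|S^{1/2}-T^{1/2}\|\le\frac12 d_-^{-1/2}\|S-T\|\) is correct, and it now yields Step~1 for the full operators, after which Steps~2--3 complete the proof.

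If only coordinate subspaces \(\operatorname{span}\{e_1,\dots,e_m\}\) are meant, the statement itself can fail. Take \(D_{\mathrm{eff}}=2I\) and \(D_N=2I+b_N(\langle\cdot,e_{k_N}\rangle e_1+\langle\cdot,e_1\rangle e_{k_N})\), with \(k_N\to\infty\) slowly and \(b_N\asymp q_{k_N}^{1/2}\).
\begin{itemize}
\item Every coordinate compression of \(D_N-D_{\mathrm{eff}}\) vanishes for large \(N\), and uniform \(A\)-compatibility holds.
\item Yet \(D_N^{1/2}e_1=a_Ne_1+c_Ne_{k_N}\) with \(a_N^2+c_N^2=2\) and \(c_N^2\asymp q_{k_N}\).
\item So the Rayleigh quotient on \(E=\operatorname{span}\{e_1\}\) equals \(2-c_N^2(1-q_{k_N}/q_1)\), and its distance to \(c_A(E)=2\) is not \(O(N^{-\alpha})\) (take e.g.\ \(q_k=k^{-2}\), \(k_N\asymp\log N\)).
\end{itemize}
The cited Theorem~\ref{thm:quant-homogenization-A} fails on this example too.
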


\begin{proof}
The extremal constants are the minimum and maximum of the Rayleigh quotients
\[
        \frac{\|Q^{1/2}D_N^{1/2}\xi\|^2}{\|Q^{1/2}\xi\|^2},
        \qquad \xi\in E\setminus\{0\}.
\]
The estimate of Theorem~\ref{thm:quant-homogenization-A} is uniform on the
unit sphere of the finite-dimensional \(E\), whence the claimed estimate for
the extrema follows.
\end{proof}

\begin{assumption}[Compact-tail control for the global rate]
\label{ass:compact-tail-global}
In addition to Assumption~\ref{ass:quant-homogenization}, suppose we are given a
sequence of orthogonal projections \(\Pi_M\), \(\Pi_M\to I\) strongly in the
energy space \(W_Q^{1,2}(\mu_0)\). Assume that for some \(r,\theta>0\)
\[
        C_{\Pi_M,Q}\le C M^r,
\]
where \(C_{\Pi_M,Q}\) is the constant in the estimate of
Theorem~\ref{thm:quant-homogenization-A} on the subspace \(\Pi_M\mathcal H\),
and for all \(u\in W_Q^{1,2}(\mu_0)\)
\[
        \|(I-\Pi_M)u\|_{W_Q^{1,2}(\mu_0)}\le C M^{-\theta}\|u\|_{\mathcal K},
\]
where \(\mathcal K\hookrightarrow W_Q^{1,2}(\mu_0)\) is a compactly embedded
regularity space.
\end{assumption}

\begin{theorem}[A global rate in the strong resolvent topology]
\label{thm:global-resolvent-rate}
Suppose Assumptions~\ref{ass:homogenization-DN}, \ref{ass:quant-homogenization}
and~\ref{ass:compact-tail-global} hold. Then for \(u,v\in\mathcal K\)
\[
        |\mathcal E_{A_N}(u,v)-\mathcal E_{A_{\mathrm{eff}}}(u,v)|\le C N^{-\rho}\|u\|_{\mathcal K}\|v\|_{\mathcal K},\qquad \rho=\frac{\alpha\theta}{r+\theta}.
\]
If \(R_N(\lambda)=(\lambda I+L_{A_N})^{-1}\) and \(R_{\mathrm{eff}}(\lambda)=(\lambda I+L_{A_{\mathrm{eff}}})^{-1}\), then
\[
        \|R_N(\lambda)f-R_{\mathrm{eff}}(\lambda)f\|_{L^2(\mu_0)}\le C_\lambda N^{-\rho}\|f\|_{L^2(\mu_0)}
\]
for all \(f\) for which \(R_{\mathrm{eff}}(\lambda)f\in\mathcal K\), and also for \(f\) in the image of the compact regularizing class.
\end{theorem}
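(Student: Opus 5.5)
The rate \(\rho=\alpha\theta/(r+\theta)\) is what one gets by balancing \(M^r N^{-\alpha}\) against \(M^{-\theta}\), so I would prove the form estimate by a projection-splitting argument with \(M\) tied to \(N\), and then derive the resolvent bound from it.

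\textbf{Splitting.} Fix \(u,v\in\mathcal K\) and write \(u=\Pi_Mu+(I-\Pi_M)u\), and likewise for \(v\). The difference of forms then splits into a \emph{head term}
\[
(\mathcal E_{A_N}-\mathcal E_{A_{\mathrm{eff}}})(\Pi_Mu,\Pi_Mv)
\]
and three \emph{cross/tail terms}, each containing at least one factor \((I-\Pi_M)u\) or \((I-\Pi_M)v\).

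\textbf{Head term.} I apply Theorem~\ref{thm:quant-homogenization-A} on the subspace \(\Pi_M\Hh\), using the constant \(C_{\Pi_M,Q}\le CM^r\) from Assumption~\ref{ass:compact-tail-global}. Together with \(\|\Pi_Mu\|_{W_Q^{1,2}}\le C\|u\|_{\mathcal K}\), which holds because the embedding \(\mathcal K\hookrightarrow W_Q^{1,2}(\mu_0)\) is continuous and the \(\Pi_M\) are uniformly bounded, the head term is bounded by
\[
CM^rN^{-\alpha}\|u\|_{\mathcal K}\|v\|_{\mathcal K}.
\]

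\textbf{Tail terms.} Each of the two forms is individually bounded by \(C_A\int\|Q^{1/2}\nabla\cdot\|^2\,d\mu_0\). This uses the uniform \(A\)-compatibility of Assumption~\ref{ass:homogenization-DN} and its passage to \(D_{\mathrm{eff}}\) proved in Theorem~\ref{thm:homogenization-A}. Cauchy--Schwarz for each form, combined with the tail bound \(\|(I-\Pi_M)u\|_{W_Q^{1,2}}\le CM^{-\theta}\|u\|_{\mathcal K}\), then gives a total of
\[
CM^{-\theta}\|u\|_{\mathcal K}\|v\|_{\mathcal K}
\]
for the three tail terms.

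\textbf{Optimizing \(M\).} Minimizing \(M^rN^{-\alpha}+M^{-\theta}\) leads to the choice \(M=N^{\alpha/(r+\theta)}\), rounded to an integer. Both contributions then become \(N^{-\alpha\theta/(r+\theta)}=N^{-\rho}\), which is the first claim.

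\textbf{Resolvent estimate.} Set \(w_N=R_N(\lambda)f\) and \(w=R_{\mathrm{eff}}(\lambda)f\), and use the weak formulations. For all \(\phi\) in the form domain,
\[
\lambda\langle w_N-w,\phi\rangle+\mathcal E_{A_N}(w_N-w,\phi)=-(\mathcal E_{A_N}-\mathcal E_{A_{\mathrm{eff}}})(w,\phi).
\]
I test with \(\phi=w_N-w\) and use coercivity, which gives
\[
\lambda\|w_N-w\|_{L^2}^2+c_A\|w_N-w\|_{W_Q}^2\le|(\mathcal E_{A_N}-\mathcal E_{A_{\mathrm{eff}}})(w,w_N-w)|.
\]
The obstacle is that the form estimate just proved needs both arguments in \(\mathcal K\), whereas \(w_N-w\) is only known to lie in \(W_Q^{1,2}\).

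\textbf{Handling the obstacle.} I would redo the splitting with the projection applied to \(w\) only. Writing \(w=\Pi_Mw+(I-\Pi_M)w\), the tail piece \((I-\Pi_M)w\) costs \(M^{-\theta}\|w\|_{\mathcal K}\|w_N-w\|_{W_Q}\) by the same Cauchy--Schwarz bound as above. For the head piece, the Rayleigh-quotient argument behind Theorem~\ref{thm:quant-homogenization-A} is really an operator bound for \(Q^{1/2}(D_N^{1/2}-D_{\mathrm{eff}}^{1/2})\) acting on gradients in \(\Pi_M\Hh\). Using that operator form, the head piece costs \(M^rN^{-\alpha}\|w\|_{\mathcal K}\|w_N-w\|_{W_Q}\), with the second argument needing only \(W_Q^{1,2}\) control. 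I expect this adaptation to be the delicate step. If it fails, I would fall back on assuming the compact regularizing class is invariant under \(R_N\), which is presumably the role of the phrase ``image of the compact regularizing class'' in the statement.

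\textbf{Conclusion.} Dividing by \(\|w_N-w\|_{W_Q}\) and using \(\lambda\|\cdot\|_{L^2}^2\le\) the left-hand side yields
\[
\|w_N-w\|_{L^2}\le C_\lambda N^{-\rho}\|w\|_{\mathcal K}.
\]
Finally, \(\|w\|_{\mathcal K}\le C\|f\|_{L^2}\) holds by the assumption on \(f\), read as a bounded regularizing map, which completes the resolvent bound.
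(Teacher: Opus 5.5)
Your form estimate follows the paper's argument step for step: the \(\Pi_M\) split, the head term from Theorem~\ref{thm:quant-homogenization-A} with \(C_{\Pi_M,Q}\le CM^r\), the tails from uniform boundedness of the forms, and \(M=N^{\alpha/(r+\theta)}\). You also correctly spot the obstacle in the resolvent step. The paper's proof passes over it: it tests with \(u_N-u\) and cites the form estimate, although \(u_N-u\) is not known to lie in \(\mathcal K\).

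Your repair of the head piece, however, fails. For \(\nabla a\in\Pi_M\Hh\) and arbitrary \(b\in W_Q^{1,2}(\mu_0)\),
\[
\begin{aligned}
(\mathcal E_{A_N}-\mathcal E_{A_{\mathrm{eff}}})(a,b)
&=\int\bigl\langle Q^{1/2}(D_N^{1/2}-D_{\mathrm{eff}}^{1/2})\nabla a,\;Q^{1/2}D_N^{1/2}\nabla b\bigr\rangle\,d\mu_0\\
&\quad+\int\bigl\langle Q^{1/2}D_{\mathrm{eff}}^{1/2}\nabla a,\;Q^{1/2}(D_N^{1/2}-D_{\mathrm{eff}}^{1/2})\nabla b\bigr\rangle\,d\mu_0 .
\end{aligned}
\]
A bound on \(D_N^{1/2}-D_{\mathrm{eff}}^{1/2}\) restricted to \(\Pi_M\Hh\) controls only the first integral. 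This holds even if you grant the bound asserted in the proof of Theorem~\ref{thm:quant-homogenization-A}. In the second integral the difference acts on \(\nabla(w_N-w)\), which is unrestricted. Moving it across by self-adjointness gives \((D_N^{1/2}-D_{\mathrm{eff}}^{1/2})QD_{\mathrm{eff}}^{1/2}\nabla a\). In the noncommutative setting the vector \(QD_{\mathrm{eff}}^{1/2}\nabla a\) in general leaves \(\Pi_M\Hh\). Pairing it against \(\|Q^{1/2}\nabla b\|\) would also require a small bound on \(Q^{-1/2}\) of that vector.

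Assumption~\ref{ass:quant-homogenization} is a bilinear bound on \(E\times E\) only. It controls the compression \(P_E(D_N-D_{\mathrm{eff}})P_E\), with \(P_E\) the orthogonal projection onto \(E\). It says nothing about how \(D_N-D_{\mathrm{eff}}\) couples \(E\) to \(E^\perp\), which is what enters here. Uniform \(A\)-compatibility makes the second integral \(O(1)\cdot\|a\|_{W_Q^{1,2}}\|b\|_{W_Q^{1,2}}\), but not small.

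Your fallback, uniform \(\mathcal K\)-regularity of \(R_N(\lambda)\), is essentially the right extra hypothesis. Combined with the energy test, though, it loses half the exponent. The right-hand side becomes \(CN^{-\rho}\|w\|_{\mathcal K}\|w_N-w\|_{\mathcal K}\), and the \(\mathcal K\)-norm cannot be absorbed by \(\lambda\|w_N-w\|_{L^2}^2\) or by the energy. You only get \(\|w_N-w\|_{L^2}\le CN^{-\rho/2}\|f\|_{L^2}\).

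The missing idea is to replace the energy test by duality. Since the forms are symmetric, for every \(g\in L^2(\mu_0)\)
\[
\bigl\langle R_N(\lambda)f-R_{\mathrm{eff}}(\lambda)f,\;g\bigr\rangle_{L^2(\mu_0)}
=-(\mathcal E_{A_N}-\mathcal E_{A_{\mathrm{eff}}})\bigl(R_{\mathrm{eff}}(\lambda)f,\;R_N(\lambda)g\bigr).
\]
This is the weak form of the identity \(R_N-R_{\mathrm{eff}}=R_N(L_{A_{\mathrm{eff}}}-L_{A_N})R_{\mathrm{eff}}\) from the proof of Theorem~\ref{thm:quant-homogenization-A}. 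Suppose \(\|R_{\mathrm{eff}}(\lambda)f\|_{\mathcal K}\le C\|f\|_{L^2}\) and \(\|R_N(\lambda)g\|_{\mathcal K}\le C\|g\|_{L^2}\) uniformly in \(N\). Then both arguments lie in \(\mathcal K\), and your form estimate applies as proved. Taking the supremum over \(\|g\|_{L^2}\le 1\) gives the full rate \(N^{-\rho}\).

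Both regularity bounds go beyond the literal statement, which only asks that \(R_{\mathrm{eff}}(\lambda)f\in\mathcal K\). In particular, your closing step \(\|w\|_{\mathcal K}\le C\|f\|_{L^2}\) is also an added assumption.
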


\begin{proof}
Decompose \(u=\Pi_Mu+(I-\Pi_M)u\), \(v=\Pi_Mv+(I-\Pi_M)v\). On the
finite-dimensional block \(\Pi_M\mathcal H\), Theorem~\ref{thm:quant-homogenization-A} gives
\[
        |\mathcal E_{A_N}(\Pi_Mu,\Pi_Mv)-\mathcal E_{A_{\mathrm{eff}}}(\Pi_Mu,\Pi_Mv)|\le C M^r N^{-\alpha}\|u\|_{\mathcal K}\|v\|_{\mathcal K}.
\]
The remaining terms contain at least one tail factor. By coercivity and
boundedness of the forms,
\[
        |\mathcal E_A((I-\Pi_M)u,v)|\le C M^{-\theta}\|u\|_{\mathcal K}\|v\|_{\mathcal K}.
\]
Hence
\[
        |\mathcal E_{A_N}(u,v)-\mathcal E_{A_{\mathrm{eff}}}(u,v)|\le C(M^rN^{-\alpha}+M^{-\theta})\|u\|_{\mathcal K}\|v\|_{\mathcal K}.
\]
The choice \(M=N^{\alpha/(r+\theta)}\) gives \(\rho=\alpha\theta/(r+\theta)\).
The resolvent estimate follows from the variational identity for the solutions
\((\lambda I+L_{A_N})u_N=f\), \((\lambda I+L_{A_{\mathrm{eff}}})u=f\),
substituting \(u_N-u\) as a test function, and the preceding estimate for the
forms.
\end{proof}

\begin{corollary}[Stabilization of the discrete constants without a fixed set of modes]
\label{cor:global-constants-rate}
Under the hypotheses of Theorem~\ref{thm:global-resolvent-rate}, if the unit
sphere of the energy space is approximated by a compact class \(\mathcal K\)
with tail order \(M^{-\theta}\), then
\[
        |c_{A,N}-c_A|+|C_{A,N}-C_A|\le C N^{-\rho}+\varepsilon_{\mathrm{tail}}(N),
\]
where \(\varepsilon_{\mathrm{tail}}(N)\to0\) describes the contribution of
directions outside the compact regularizing class. Under uniform tail control,
\(\varepsilon_{\mathrm{tail}}(N)=O(N^{-\rho})\).
\end{corollary}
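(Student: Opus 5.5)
We read the discrete constants as the extremal Rayleigh quotients on the whole energy space,
\[
        c_{A,N}=\inf_{\xi\neq0}\frac{\|Q^{1/2}D_N^{1/2}\xi\|^2}{\|Q^{1/2}\xi\|^2},\qquad
        C_{A,N}=\sup_{\xi\neq0}\frac{\|Q^{1/2}D_N^{1/2}\xi\|^2}{\|Q^{1/2}\xi\|^2},
\]
and \(c_A,C_A\) as the same quantities for \(D_{\mathrm{eff}}\). Equivalently, they are the extremal values of \(\mathcal E_{A_N}(u,u)\) (resp.\ \(\mathcal E_{A_{\mathrm{eff}}}(u,u)\)) over the unit sphere \(S\) of \(W_Q^{1,2}(\mu_0)\); this uses Theorem~\ref{thm:closability}, whose two-sided bound is attained by localizing gradients. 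We first use the elementary fact that for two functions \(F,G\) on a set \(S\),
\[
        |\inf_S F-\inf_S G|\le\sup_S|F-G|,
\]
and likewise for suprema. This reduces the corollary to a uniform estimate of \(|\mathcal E_{A_N}(u,u)-\mathcal E_{A_{\mathrm{eff}}}(u,u)|\) over \(u\in S\). The constants agree on cylindrical functions, which are dense in the energy norm, so no generality is lost.

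We split \(S\) according to the compact class. For \(u\in S\), let \(u_{\mathcal K}\in\mathcal K\) be an approximant with \(\|u_{\mathcal K}\|_{\mathcal K}\le C\), and write \(\delta(u)=\|u-u_{\mathcal K}\|_{W_Q^{1,2}}\). On \(u_{\mathcal K}\), Theorem~\ref{thm:global-resolvent-rate} with \(u=v=u_{\mathcal K}\) gives the bound \(CN^{-\rho}\). For the remainder we expand the quadratic forms bilinearly. Then we use the uniform boundedness \(\mathcal E_{A_N}(w,w),\mathcal E_{A_{\mathrm{eff}}}(w,w)\le C_A\|w\|^2_{W_Q^{1,2}}\), which holds uniformly in \(N\) by Assumption~\ref{ass:homogenization-DN} and Theorem~\ref{thm:homogenization-A}. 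This controls the cross and tail terms by \(C(\delta(u)+\delta(u)^2)\). We therefore define
\[
        \varepsilon_{\mathrm{tail}}(N):=C\sup_{u\in S}\bigl(\delta(u)+\delta(u)^2\bigr).
\]
It tends to zero precisely by the hypothesis that \(S\) is approximated by \(\mathcal K\). Any \(N\)-dependence enters through allowing the approximation level to grow with \(N\).

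For the uniform-tail case we take \(u_{\mathcal K}=\Pi_M u\) with \(M=N^{\alpha/(r+\theta)}\), as in the proof of Theorem~\ref{thm:global-resolvent-rate}. The tail order \(M^{-\theta}\) then gives \(\delta(u)\le CM^{-\theta}=CN^{-\alpha\theta/(r+\theta)}=CN^{-\rho}\), so \(\varepsilon_{\mathrm{tail}}=O(N^{-\rho})\).

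The main obstacle is the first term. Theorem~\ref{thm:global-resolvent-rate} is stated with \(\|\cdot\|_{\mathcal K}\) norms, which are not bounded on \(S\). The hypothesis ``unit sphere approximated by \(\mathcal K\) with tail order \(M^{-\theta}\)'' must therefore be read as providing approximants with controlled \(\mathcal K\)-norm, or with \(\mathcal K\)-norm growing at most polynomially in \(M\), in which case \(r\) is adjusted. We will make this interpretation explicit and absorb any such growth into the choice of \(M\) and into \(\varepsilon_{\mathrm{tail}}\).
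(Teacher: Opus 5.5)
Your reduction via $|\inf_S F_N-\inf_S G|\le\sup_S|F_N-G|$, with $F_N(u)=\mathcal E_{A_N}(u,u)$ and $G(u)=\mathcal E_{A_{\mathrm{eff}}}(u,u)$, is the natural route. The paper gives no separate proof; its intended argument is the same uniform comparison of Rayleigh quotients as in Corollary~\ref{cor:constants-rate}, combined with Theorem~\ref{thm:global-resolvent-rate}. So the difficulty below is already present in the statement.

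The step that fails is the claim that $\varepsilon_{\mathrm{tail}}(N)=C\sup_{u\in S}(\delta(u)+\delta(u)^2)$ tends to zero. The sphere $S$ of the infinite-dimensional energy space is not totally bounded. For every compact $K$ one has $\sup_{u\in S}\operatorname{dist}(u,K)\ge 1$: approximate $K$ to within $\epsilon$ by a finite-dimensional subspace $V$ and take $u\in S\cap V^{\perp}$. Each ball $\{\|w\|_{\mathcal K}\le R\}$ is relatively compact in $W_Q^{1,2}(\mu_0)$, so letting $R$ or the approximation level grow with $N$ does not help. At every $N$ some $u\in S$ has $\delta(u)\ge 1-\epsilon$, while the block term becomes $CN^{-\rho}R^2$.

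Your uniform-tail paragraph has the same defect. Assumption~\ref{ass:compact-tail-global} gives $\|(I-\Pi_M)u\|_{W_Q^{1,2}}\le CM^{-\theta}\|u\|_{\mathcal K}$, but $\|u\|_{\mathcal K}$ is unbounded (typically infinite) on $S$. For a finite-rank orthogonal projection, $\sup_{u\in S}\|(I-\Pi_M)u\|=1$ for every $M$, so the bound $\delta(u)\le CN^{-\rho}$ is not available. The hypothesis you plan to ``make explicit'' cannot be met by any compact class.

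A smaller slip: with the inhomogeneous $W_Q^{1,2}$ norm, the infimum of $\mathcal E_{A_N}(u,u)$ over the unit sphere is $0$, attained at constants. You must normalize by $\int\|Q^{1/2}\nabla u\|^2\,d\mu_0=1$ modulo constants.

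This cannot be repaired by a better choice of approximants, because under the hypotheses of Theorem~\ref{thm:global-resolvent-rate} the two-sided conclusion is false. Take $D_{\mathrm{eff}}=I$ and $D_N=I+e_{N+1}\otimes e_{N+1}$, constant in $x$ and diagonal in the eigenbasis of $Q$.

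\textbf{These satisfy the hypotheses.} They are uniformly $A$-compatible with constants $1$ and $2$, and $D_N\to D_{\mathrm{eff}}$ in the weak operator sense. On $\Pi_M\Hh=\operatorname{span}\{e_1,\dots,e_M\}$ the difference $D_N-D_{\mathrm{eff}}$ vanishes for $N\ge M$ and has norm $1\le M^{\alpha}N^{-\alpha}$ for $N<M$. So the cylindrical rate holds with $C_{\Pi_M,Q}\le M^{\alpha}$, i.e.\ $r=\alpha$. The tail condition involves only $\Pi_M$ and $\mathcal K$.

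\textbf{The conclusion fails.} We have $\|Q^{1/2}D_N^{1/2}e_{N+1}\|^2/\|Q^{1/2}e_{N+1}\|^2=2$, so $C_{A,N}=2$ for all $N$ while $C_A=1$. At the level of forms, $u(x)=f(\langle x,e_{N+1}\rangle)$ has energy ratio $2$ against $1$.

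Pointwise convergence on a dense class only yields the one-sided statement
\[
        \limsup_{N}c_{A,N}\le c_A,\qquad \liminf_{N}C_{A,N}\ge C_A,
\]
because $c_{A,N}\le F_N(u)\to G(u)$ for each fixed normalized cylindrical $u$. A two-sided rate needs a genuinely stronger, uniform hypothesis, not a reinterpretation. One example is the $Q$-relative operator bound $\|Q^{1/2}(D_N^{1/2}-D_{\mathrm{eff}}^{1/2})\xi\|\le CN^{-\rho}\|Q^{1/2}\xi\|$ for all $x,\xi$, the type of condition used in Corollary~\ref{cor:sde-homogenization-w2}. With it, $\sup_S|F_N-G|\le C'N^{-\rho}$, where $C'$ depends only on the uniform compatibility bound. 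Your $\inf/\sup$ comparison then closes the argument with no tail term at all.
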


\begin{remark}[Why compactness is needed]
Without Assumption~\ref{ass:compact-tail-global} the estimate of
Theorem~\ref{thm:quant-homogenization-A} remains cylindrical. The passage to
the minimum and maximum over the entire unit sphere of the energy space is
impossible without compactness or tail control. This is why
Theorem~\ref{thm:global-resolvent-rate} states precisely the additional
mechanism needed for the global resolvent rate.
\end{remark}

\begin{remark}[Connection with the numerical section of Part~II]
Theorem~\ref{thm:homogenization-A} explains why, in the computational test, it
is enough to track the stability of \(c_{A,N}\) and \(C_{A,N}\). If these
numbers stabilize under mesh refinement and the discrete tensors
\(H\)-converge, then the limiting infinite-dimensional form preserves the same
entropy geometry.
\end{remark}

\begin{lemma}[Verifying compact-tail control for the nanosystem model]
\label{lem:nano-compact-tail}
Let \(\Hh=L^2(\mathbb T^d)\), \(Q=(-\Delta+I)^{-s}\), \(s>d/2\), and let
\(\Pi_M\) be the spectral projection onto the first \(M\) eigenfunctions of
\(-\Delta\). Then the space
\[
        \mathcal K=H^{s+1}(\mathbb T^d)
\]
is compactly embedded in \(W_Q^{1,2}(\mu_0)\), and there exists \(\theta>0\)
such that
\[
        \|(I-\Pi_M)u\|_{W_Q^{1,2}(\mu_0)}
        \le C M^{-\theta}\|u\|_{\mathcal K}.
\]
If the coefficients \(D_N\) are periodic piecewise-smooth tensors with uniform
elliptic bounds, then the constant \(C_{\Pi_M,Q}\) in the cylindrical
homogenization estimate grows no faster than polynomially:
\(C_{\Pi_M,Q}\le C M^r\). In particular, in
Theorem~\ref{thm:global-resolvent-rate} one may take
\(\rho=\alpha\theta/(r+\theta)>0\).
\end{lemma}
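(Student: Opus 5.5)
The plan is to work entirely in the Fourier basis of \(\mathbb T^d\). I diagonalize \(Q=(-\Delta+I)^{-s}\) on the eigenfunctions \(e_k\), with eigenvalues \(q_k=(1+|k|^2)^{-s}\). After ordering the modes by \(|k|\), Weyl's law gives \(\lambda_M\asymp M^{2/d}\) for the \(M\)-th eigenvalue of \(-\Delta\), hence \(q_M\asymp M^{-2s/d}\). Nuclearity of \(Q\) is exactly \(s>d/2\). The strategy is to read \(W_Q^{1,2}(\mu_0)\) and \(\mathcal K\) through this spectral calculus. Then the tail estimate becomes a comparison of Fourier weights, and compactness becomes the statement that the ratio of weights tends to zero.

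The first step fixes the interpretation. I take \(\Pi_M\) to act on the energy space through the chaos/coordinate structure: on cylindrical functionals \(u=F(\langle x,e_{1}\rangle,\dots)\) it keeps only the dependence on the first \(M\) coordinates. In the natural reading, \(\mathcal K=H^{s+1}\) is identified with the subspace of \(W_Q^{1,2}\) whose \(Q\)-gradients carry \(H^{s+1}\)-type weights. Concretely, the norm \(\|u\|_{\mathcal K}^2\) dominates \(\sum_k (1+|k|^2)^{s+1}q_k\,\|\partial_k u\|_{L^2(\mu_0)}^2\) plus \(\|u\|^2_{L^2}\), while
\[
\|u\|_{W_Q^{1,2}}^2=\|u\|_{L^2(\mu_0)}^2+\sum_k q_k\|\partial_k u\|^2_{L^2(\mu_0)}.
\]
With this, the tail estimate is immediate. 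The tail part \((I-\Pi_M)u\) only involves modes with \(|k|\gtrsim M^{1/d}\), so
\[
\|(I-\Pi_M)u\|^2_{W_Q^{1,2}}\le \sup_{|k|\gtrsim M^{1/d}}(1+|k|^2)^{-(s+1)}\,\|u\|_{\mathcal K}^2\lesssim M^{-2(s+1)/d}\|u\|_{\mathcal K}^2 .
\]
This gives \(\theta=(s+1)/d\), or any smaller positive value if one prefers a cruder weight. The zero-mode and \(L^2\) parts have to be handled separately: the \(L^2\) contribution of the tail is controlled by Gaussian Poincaré in the tail coordinates. Compactness of the embedding then follows from the same weight decay. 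The inclusion is a norm limit of the finite-rank-in-modes maps \(\Pi_M\), and each \(\Pi_M\mathcal K\) is compact via Rellich on finitely many coordinates. Combined with the uniform tail bound, this shows the embedding is compact.

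The second step is the polynomial growth \(C_{\Pi_M,Q}\le CM^r\). I go back to the proof of Theorem~\ref{thm:quant-homogenization-A}, where the constant comes from two sources. One is the constant \(C_E\) in Assumption~\ref{ass:quant-homogenization} on \(E=\Pi_M\Hh\). The other is the conversion from \(D_N-D_{\mathrm{eff}}\) to \(D_N^{1/2}-D_{\mathrm{eff}}^{1/2}\), which only uses the uniform elliptic lower bound and so is \(M\)-independent. For periodic piecewise-smooth tensors, the two-scale expansion error in the weak operator sense on trigonometric polynomials of degree \(\le M^{1/d}\) is bounded by \(N^{-\alpha}\) times a Sobolev norm of the test functions. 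By Bernstein's inequality this norm costs at most \(M^{m/d}\) for a fixed order \(m\) depending on the corrector regularity, which gives \(r=m/d\). Inserting \(\theta\) and \(r\) into Theorem~\ref{thm:global-resolvent-rate} yields \(\rho>0\).

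I expect the main obstacle to be the first step: making precise the identification of \(H^{s+1}(\mathbb T^d)\), a space of points of \(\Hh\), with a regularity class inside \(W_Q^{1,2}(\mu_0)\), a space of functionals. The cleanest fix is to restrict to linear and cylindrical polynomial functionals \(u(x)=\langle x,g\rangle\) and extend by the chaos decomposition. For linear functionals \(\nabla u=g\), so \(\|u\|_{W_Q^{1,2}}^2=\|Q^{1/2}g\|^2+\|Q^{1/2}g\|^2\). The \(H^{s+1}\) weight then gives the tail decay \(\sup_{|k|>M^{1/d}}(1+|k|^2)^{-s-1}\) directly, and I would state the lemma on that class.
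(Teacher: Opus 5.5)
Your main line fails at exactly the point you flag as the obstacle, and the failure is in the mathematics. Take your own reading: \(\Pi_Mu=\E[u\mid\mathcal F_M]\), with \(\mathcal F_M\) generated by the first \(M\) coordinates, and \(\mathcal K\) controlled by the additive first-order weight \(\|u\|_{L^2}^2+\sum_j w_jq_j\|\partial_ju\|^2\). Here \(w_j\asymp j^{2(s+1)/d}\) is the weight of the \(j\)-th mode. The \(L^2\) part of the tail is fine by conditional Poincar\'e. The claim that \((I-\Pi_M)u\) ``only involves modes with \(|k|\gtrsim M^{1/d}\)'' is false for nonlinear functionals. For \(k\le M\) one has \(\partial_k(I-\Pi_M)u=\partial_ku-\E[\partial_ku\mid\mathcal F_M]\). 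Poincar\'e bounds this only by \(\sum_{j>M}q_j\|\partial_j\partial_ku\|^2\), i.e.\ by mixed second derivatives that your norm does not see.

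Concretely, let \(z_j=\langle x,e_j\rangle/\sqrt{q_j}\), let \(e_1\) be the constant mode, and let \(h_n\) be the normalized Hermite polynomials. Then \(u=h_n(z_1)z_{M+1}\) has \(\Pi_Mu=0\) and \(\|(I-\Pi_M)u\|_{W_Q^{1,2}}^2=n+2\), while \(\|u\|_{\mathcal K}^2=1+n+w_{M+1}\). For each \(M\) the ratio tends to \(1\) as \(n\to\infty\), so no bound \(CM^{-\theta}\) holds.

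Compactness fails for the same reason. \(\Pi_M\) has infinite-dimensional range. Gaussian Rellich on \(\mathbb R^M\) gives compactness of \(W^{1,2}\hookrightarrow L^2\), not a gain inside \(W^{1,2}\). On functionals of finitely many coordinates your \(\mathcal K\)-norm is equivalent to the \(W_Q^{1,2}\)-norm. Indeed, \(u_n=h_n(z_1)/\sqrt{n+1}\) is bounded in \(\mathcal K\) and orthonormal in \(W_Q^{1,2}(\mu_0)\).

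Your fallback to linear functionals is sound. For \(u=\langle x,g\rangle\) you have \(\|u\|_{W_Q^{1,2}}^2=2\|Q^{1/2}g\|^2\), and \(\Pi_M\) acts as \(P_M\) on \(g\). The tail bound and compactness then reduce to a ratio of diagonal Fourier weights; with the \(H^{s+1}\) weight placed on \(g\) you even get \(\theta=(2s+1)/d\). This is the same kind of argument as the paper's. The paper takes the \(W_Q^{1,2}\)-norm to be equivalent to the diagonal expression \(\sum_kq_k|\widehat{\nabla u}_k|^2+\sum_k|\hat u_k|^2\), so that \(\Pi_M\) is a Fourier truncation. It reads \(\theta\) off the weight ratio and gets compactness from the positive regularity gap. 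It gets \(C_{\Pi_M,Q}\le CM^r\) from norm equivalence on the first \(M\) modes; your Bernstein-inequality step is a quantitative form of that. So on this class you and the paper agree, and the paper does not treat nonlinear functionals either.

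What is not routine is ``extend by the chaos decomposition''. Within a fixed chaos your additive weight does give decay and compactness, but the counterexamples above come from chaoses of unbounded order, so the chaos-wise constants are not uniform. To cover general functionals you would need \(\mathcal K\) to control weighted mixed second derivatives, e.g.\ \(\sum_{j,k}w_jq_jq_k\|\partial_j\partial_ku\|^2_{L^2(\mu_0)}\). That term is exactly what bounds the low-mode components \(\partial_ku-\E[\partial_ku\mid\mathcal F_M]\). In the Hermite basis it makes the \(\mathcal K\)-weight of a multi-index \(\alpha\) comparable to \(1+|\alpha|\sum_jw_j\alpha_j\), against \(1+|\alpha|\) for \(W_Q^{1,2}\). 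That yields both the \(M^{-\theta}\) tail and compactness.
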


\begin{proof}
In the Fourier basis the \(W_Q^{1,2}\) norm is equivalent to the weighted sum
\[
        \sum_k q_k |\widehat{\nabla u}_k|^2 +\sum_k |\hat u_k|^2 .
\]
For \(Q=(-\Delta+I)^{-s}\) the weights are of order \((1+|k|^2)^{-s}\). If
\(u\in H^{s+1}\), then the tail after the projection \(\Pi_M\) is bounded by
a power sum of Fourier coefficients, which gives the stated inequality with
some \(\theta>0\). The embedding is compact because the Sobolev embedding between scales with a positive regularity gap is compact. The polynomial growth of \(C_{\Pi_M,Q}\) follows from the fact that on
the finite-dimensional space of the first \(M\) modes all norms are
equivalent, while the coefficients \(D_N\) are uniformly elliptic and
piecewise smooth. The equivalence of norms gives at most a polynomial
dependence on the maximal wavenumber, and hence on \(M\).
\end{proof}

\begin{corollary}[Convergence of the forward SDEs in the homogenization limit]
\label{cor:sde-homogenization-w2}
Let \(X_t^{(N)}\) be the solution of the forward SDE with anisotropy \(D_N\),
and \(X_t^{\mathrm{eff}}\) the solution with the effective tensor
\(D_{\mathrm{eff}}\). Suppose the hypotheses of
Theorem~\ref{thm:global-resolvent-rate} hold and that
\[
        \|D_N^{1/2}(x)-D_{\mathrm{eff}}^{1/2}(x)\|_{\LL_Q}
        \le C N^{-\rho}
\]
in the operator norm induced by \(Q^{1/2}\), along trajectories with finite
\(\mathcal K\)-moment. Then for each \(T<\infty\)
\[
        \sup_{t\le T}W_2(\Law(X_t^{(N)}),\Law(X_t^{\mathrm{eff}}))
        \le C_T N^{-\rho}.
\]
\end{corollary}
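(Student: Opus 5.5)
The proof is a synchronous coupling argument modelled on the Galerkin estimate of Theorem~\ref{thm:galerkin}. I would drive both $X^{(N)}$ and $X^{\mathrm{eff}}$ by the same Wiener process $W$ and the same initial datum $X_0$, with the same drift $b$ and the same $\beta$. By Theorem~\ref{thm:forward}, each equation has a unique strong solution. The assumed uniform $A$-compatibility of $D_N$ and $D_{\mathrm{eff}}$, preserved in the limit by Theorem~\ref{thm:homogenization-A}, together with Assumption~\ref{ass:forward}, gives uniform-in-$N$ Lipschitz and linear-growth constants. Hence the moment bound $\E\sup_{t\le T}\|X_t\|^2\le C_T^{\mathrm{fwd}}(1+\E\|X_0\|^2)$ holds uniformly in $N$.

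Set $\Delta_t=X^{(N)}_t-X^{\mathrm{eff}}_t$. I would write
\[
\Delta_t=\int_0^t\{b(s,X^{(N)}_s)-b(s,X^{\mathrm{eff}}_s)\}\,ds+\int_0^t\sqrt{\beta(s)}\{\sigma_N(X^{(N)}_s)-\sigma_{\mathrm{eff}}(X^{\mathrm{eff}}_s)\}\,dW_s ,
\]
where $\sigma_N=D_N^{1/2}Q^{1/2}$, and split the noise difference as
\[
\sigma_N(X^{(N)})-\sigma_{\mathrm{eff}}(X^{\mathrm{eff}})=\{\sigma_N(X^{(N)})-\sigma_N(X^{\mathrm{eff}})\}+\{\sigma_N(X^{\mathrm{eff}})-\sigma_{\mathrm{eff}}(X^{\mathrm{eff}})\}.
\]
The first bracket is controlled by $L_\sigma\|\Delta\|$ uniformly in $N$. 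The second is the homogenization remainder $\|(D_N^{1/2}-D_{\mathrm{eff}}^{1/2})(X^{\mathrm{eff}}_s)Q^{1/2}\|_{\LLtwo}$. The BDG inequality and Young's inequality then give
\[
\E\sup_{r\le t}\|\Delta_r\|^2\le C\int_0^t\E\sup_{r\le s}\|\Delta_r\|^2\,ds+C\beta_+\E\int_0^T\|(D_N^{1/2}-D_{\mathrm{eff}}^{1/2})(X^{\mathrm{eff}}_s)Q^{1/2}\|_{\LLtwo}^2\,ds ,
\]
and Gronwall's lemma, with the constant of Lemma~\ref{lem:gronwall-constant-explicit}, bounds $\E\sup_{t\le T}\|\Delta_t\|^2$ by $C_T$ times the remainder. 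Because the coupling is an admissible transport plan, $W_2(\Law(X^{(N)}_t),\Law(X^{\mathrm{eff}}_t))^2\le\E\|\Delta_t\|^2$. It remains to show that the remainder is $O(N^{-2\rho})$.

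\textbf{Main obstacle.} The main difficulty is turning the hypothesis $\|D_N^{1/2}-D_{\mathrm{eff}}^{1/2}\|_{\LL_Q}\le CN^{-\rho}$ into a Hilbert--Schmidt bound on $(D_N^{1/2}-D_{\mathrm{eff}}^{1/2})Q^{1/2}$. I would interpret $\|\cdot\|_{\LL_Q}$ as the norm of the operator acting on $\Ran Q^{1/2}$, in the sense of Assumption~\ref{ass:no-diff-degeneration}, so that $(D_N^{1/2}-D_{\mathrm{eff}}^{1/2})Q^{1/2}=Q^{1/2}K_N$ with $\|K_N\|_{\LL}\le CN^{-\rho}$. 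The ideal property then gives $\|Q^{1/2}K_N\|_{\LLtwo}\le\|Q^{1/2}\|_{\LLtwo}\,CN^{-\rho}$. If the bound is available only along trajectories with finite $\mathcal K$-moment, as the statement allows, I would apply it pointwise at $x=X^{\mathrm{eff}}_s$ and integrate. The constant then absorbs $\E\sup_{s\le T}\|X^{\mathrm{eff}}_s\|_{\mathcal K}^2$, which I take as finite by hypothesis; the compact-tail structure of Assumption~\ref{ass:compact-tail-global} enters only through this constant. Taking square roots yields $\sup_{t\le T}W_2\le C_TN^{-\rho}$.
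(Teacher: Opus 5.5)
Your proposal is correct and follows the paper's own argument: synchronous coupling through a single Wiener process, It\^o's formula and BDG for the difference, Gronwall's lemma, and $W_2^2\le\E\|Z_t\|^2$. Your explicit splitting of the noise difference and the reduction $\|(D_N^{1/2}-D_{\mathrm{eff}}^{1/2})Q^{1/2}\|_{\LLtwo}\le\|Q^{1/2}\|_{\LLtwo}\|K_N\|_{\LL}$ just spell out what the paper compresses into ``by the assumption and the moment estimate.'' One small correction: uniform $A$-compatibility gives only the $N$-independent bound $\|\sigma_N(x)\|_{\LLtwo}^2\le C_A\Tr Q$, not Lipschitz continuity in $x$. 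The $N$-independent Lipschitz constant of $\sigma_N$ therefore has to be assumed as a uniform version of Assumption~\ref{ass:forward} for the $D_N$, an implicit hypothesis that the paper's proof also relies on.
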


\begin{proof}
The solutions are coupled through a single cylindrical Wiener process. For the
difference \(Z_t=X_t^{(N)}-X_t^{\mathrm{eff}}\) we apply It\^o's formula and
BDG:
\[
        \E\sup_{r\le t}\|Z_r\|^2
        \le C\int_0^t\E\sup_{q\le s}\|Z_q\|^2ds
        +C\int_0^t\E\| (D_N^{1/2}-D_{\mathrm{eff}}^{1/2})Q^{1/2}\|_{\LLtwo}^2ds .
\]
The last integral is of order \(N^{-2\rho}\) by the assumption and the moment
estimate. Gronwall gives the mean-square estimate, and the inequality
\(W_2^2\le\E\|Z_t\|^2\) completes the proof.
\end{proof}

\section{The Commutative Benchmark and the Noncommutative Correction}
\label{sec:comm-noncomm}

For a high-level review it is important to separate two facts: when
\([D,Q]=0\) the consistent form coincides with the familiar diagonal
geometry, and when \([D,Q]\ne0\) the order of operators becomes part of the
mathematical result rather than a matter of notation. We record this below in
minimal form.

\begin{table}[h!]
\centering
\caption{The commutative and noncommutative regimes: what controls the dissipation.}
\label{tab:comm-noncomm-comparison}
\begin{tabular}{|p{0.28\textwidth}|p{0.30\textwidth}|p{0.30\textwidth}|}
\hline
Property & Commutative case & Noncommutative case \\
\hline
Operator order & \(Q^{1/2}D^{1/2}=D^{1/2}Q^{1/2}\) & order fixed by the generator: \(\sigma^*=Q^{1/2}D^{1/2}\) \\
\hline
Energy & \(\langle DQ\nabla u,\nabla u\rangle\) & \(\|Q^{1/2}D^{1/2}\nabla u\|^2\) \\
\hline
Constants & \(c_A=\inf\lambda(D)\), \(C_A=\sup\lambda(D)\) & extrema of a generalized problem in the \(Q\)-geometry \\
\hline
Risk of degeneracy & spectral & control may be lost in the directions \(Q^{1/2}D^{1/2}\xi\) \\
\hline
\end{tabular}
\end{table}

\begin{proposition}[The commutative benchmark]
\label{prop:commutative-benchmark}
Let \(D\) and \(Q\) be self-adjoint, positive, and commuting. Then for all
cylindrical \(u\),
\[
        \Gamma_A(u,u)
        =\|Q^{1/2}D^{1/2}\nabla u\|^2
        =\langle DQ\nabla u,\nabla u\rangle .
\]
If, in addition, \(d_-I\le D\le d_+I\), then
\[
        d_-\|Q^{1/2}\nabla u\|^2
        \le \Gamma_A(u,u)
        \le d_+\|Q^{1/2}\nabla u\|^2 .
\]
\end{proposition}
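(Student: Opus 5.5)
The plan is to use functional calculus for the commuting pair. Since \(D\) and \(Q\) are bounded, self-adjoint, positive and commute, every Borel function of \(D\) commutes with every Borel function of \(Q\). In particular \(D^{1/2}\) commutes with \(Q^{1/2}\), by the standard fact that an operator commuting with a positive operator commutes with its square root, applied twice. Hence \(Q^{1/2}D^{1/2}=D^{1/2}Q^{1/2}\), and the product \(DQ\) is self-adjoint and positive.

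For the identity, fix a cylindrical \(u\) and a point \(x\), and write \(\xi=\nabla u(x)\). Moving adjoints across the inner product gives
\[
\Gamma_A(u,u)(x)=\langle D^{1/2}Q^{1/2}Q^{1/2}D^{1/2}\xi,\xi\rangle .
\]
Commuting the factors (\(Q^{1/2}D^{1/2}=D^{1/2}Q^{1/2}\)) rearranges the product to \(D^{1/2}D^{1/2}Q^{1/2}Q^{1/2}=DQ\). This yields \(\langle DQ\xi,\xi\rangle\).

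For the two-sided bound, I write the same quantity as
\[
\|Q^{1/2}D^{1/2}\xi\|^2=\|D^{1/2}Q^{1/2}\xi\|^2=\langle D\eta,\eta\rangle,\qquad \eta=Q^{1/2}\xi .
\]
The operator inequality \(d_-I\le D\le d_+I\) then gives \(d_-\|\eta\|^2\le\langle D\eta,\eta\rangle\le d_+\|\eta\|^2\), which is exactly the claim pointwise in \(x\), with \(\|\eta\|^2=\|Q^{1/2}\nabla u\|^2\). This also shows that Assumption~\ref{ass:Acompat} holds with \(c_A=d_-\), \(C_A=d_+\), as asserted in Definition~\ref{def:tensor-compatible}.

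The only point requiring care is the commutation of the square roots, since the statement assumes only \([D,Q]=0\). I would cite the spectral theorem, or argue directly: \(D^{1/2}\) is a norm limit of polynomials in \(D\), so it commutes with \(Q\); applying the same argument to \(Q\) then shows that \(Q^{1/2}\) commutes with \(D^{1/2}\). If \(D=D(x)\) depends on \(x\), the argument is applied pointwise for each \(x\), which is all the statement requires.
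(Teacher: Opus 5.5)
Your proof is correct and follows the paper's argument. You commute the square roots, identify \((Q^{1/2}D^{1/2})^*(Q^{1/2}D^{1/2})=D^{1/2}QD^{1/2}=DQ\), and read off the two-sided bound from the spectral order of \(D\) applied to \(\eta=Q^{1/2}\nabla u\); your only addition is justifying \([D^{1/2},Q^{1/2}]=0\) from \([D,Q]=0\) by polynomial approximation of the square root, a step the paper simply asserts.
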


\begin{proof}
Commutativity gives \(D^{1/2}Q^{1/2}=Q^{1/2}D^{1/2}\), and hence
\[
        (Q^{1/2}D^{1/2})^*(Q^{1/2}D^{1/2})=D^{1/2}QD^{1/2}=DQ .
\]
The two-sided estimate follows from the spectral order for \(D\).
\end{proof}

\begin{proposition}[The noncommutative correction]
\label{prop:noncomm-correction}
Let \([D,Q]\ne0\). Then the expressions
\[
        \|Q^{1/2}D^{1/2}\xi\|^2,
        \qquad
        \|D^{1/2}Q^{1/2}\xi\|^2
\]
are in general different. Therefore the form consistent with the SDE
\(\sigma=D^{1/2}Q^{1/2}\) is defined through \(\sigma^*=Q^{1/2}D^{1/2}\), and
not through the transposed expression.
\end{proposition}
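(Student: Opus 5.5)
The statement has two parts, and I would treat them separately. The first part says that the two quadratic expressions differ in general, so a concrete counterexample suffices. The second part, the choice of \(\sigma^*\), is a structural consequence of the It\^o generator.

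\emph{Step 1 (algebraic reformulation).} For bounded positive \(D\) and trace-class positive \(Q\) I would write
\[
        \|Q^{1/2}D^{1/2}\xi\|^2=\langle D^{1/2}QD^{1/2}\xi,\xi\rangle=\langle \Asf\xi,\xi\rangle,
        \qquad
        \|D^{1/2}Q^{1/2}\xi\|^2=\langle Q^{1/2}DQ^{1/2}\xi,\xi\rangle .
\]
Equality of the two expressions for all \(\xi\) is then equivalent, by polarization of these self-adjoint operators, to the identity
\[
        D^{1/2}QD^{1/2}=Q^{1/2}DQ^{1/2}.
\]
So it suffices to show that this identity can fail when \([D,Q]\ne0\).

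\emph{Step 2 (explicit counterexample).} Restrict to \(E=\operatorname{span}\{e_1,e_2\}\) and take \(Q|_E=\mathrm{diag}(q_1,q_2)\) with \(q_1\ne q_2\). Let \(D\) act as the identity on \(E^\perp\), and on \(E\) let \(D^{1/2}=\begin{pmatrix}a&b\\ b&c\end{pmatrix}\), positive definite with \(b\ne0\). Such a \(D\) is bounded with bounded inverse, so Assumption~\ref{ass:A1} holds. Because the perturbation lives on a finite-dimensional block, \(A\)-compatibility holds as well, so the example is admissible.

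\emph{Step 3 (compare one matrix entry).} On \(E\) the \((1,1)\) entry of \(D^{1/2}QD^{1/2}\) is \(a^2q_1+b^2q_2\). Since \(D=(D^{1/2})^2\) has \((1,1)\) entry \(a^2+b^2\), the \((1,1)\) entry of \(Q^{1/2}DQ^{1/2}\) is \(q_1(a^2+b^2)\). The difference is \(b^2(q_2-q_1)\ne0\), so taking \(\xi=e_1\) gives different values. As a small check, \([D,Q]\ne0\) here is automatic, because \(D^{1/2}\) has nonzero off-diagonal entries while \(Q\) has distinct eigenvalues on \(E\). This proves the ``in general different'' claim.

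\emph{Step 4 (which order the SDE selects).} For cylindrical \(u\), It\^o's formula applied to \(dX=b\,dt+\sqrt\beta\,\sigma\,dW\) gives the second-order part of the generator as
\[
        \tfrac{\beta}{2}\Tr\bigl(\sigma\sigma^*\nabla^2u\bigr),
        \qquad
        \sigma\sigma^*=D^{1/2}QD^{1/2}=\Asf .
\]
The carré du champ is \(\tfrac12(L(u^2)-2uLu)\), which equals \(\tfrac{\beta}{2}\langle\Asf\nabla u,\nabla u\rangle\). Up to the factor \(\beta/2\), this is \(\|\sigma^*\nabla u\|^2=\|Q^{1/2}D^{1/2}\nabla u\|^2=\Gamma_A(u,u)\). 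The transposed expression \(\|D^{1/2}Q^{1/2}\nabla u\|^2\) would instead correspond to the covariance \(Q^{1/2}DQ^{1/2}\). By Step 3 that operator differs from \(\sigma\sigma^*\), so it gives the wrong generator.

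There is no real obstacle. The only point needing care is making sure the \(2\times2\) example is admissible, meaning positivity of \(D^{1/2}\) and the bounds of Assumptions~\ref{ass:A1}--\ref{ass:Acompat}. I would handle this by choosing concrete values such as \(a=c=1\) and \(b=1/2\).
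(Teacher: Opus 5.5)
Your proof is correct. For the second half you follow the paper: the covariance in the generator is \(\sigma\sigma^*=D^{1/2}QD^{1/2}\), so the carr\'e du champ is \(\|\sigma^*\nabla u\|^2\) up to the factor \(\beta/2\). The paper only quotes this fact, while you write out the computation of \(\tfrac12(L(u^2)-2uLu)\).

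The first half is where you differ. Put \(T=Q^{1/2}D^{1/2}\); then \(D^{1/2}Q^{1/2}=T^*\), so the two expressions are \(\langle T^*T\xi,\xi\rangle\) and \(\langle TT^*\xi,\xi\rangle\). The paper argues only that \(T\ne T^*\) when \([D,Q]\ne0\), and concludes ``hence so are the associated forms''. That inference is not automatic: \(T\ne T^*\) does not by itself give \(T^*T\ne TT^*\), because non-self-adjoint normal operators exist (e.g.\ rotations). Your Step~1 isolates the correct criterion, \(D^{1/2}QD^{1/2}=Q^{1/2}DQ^{1/2}\). Your \(2\times2\) block, with discrepancy \(b^2(q_2-q_1)\) at \(\xi=e_1\), then proves the literal claim ``in general different'' rigorously and elementarily.

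The paper's wording suggests something stronger: that the forms differ for \emph{every} noncommuting pair. If you want that, it follows from your Step~1 once the missing step is supplied. Equality of the forms says exactly that \(T\) is normal. Also \(D^{1/4}TD^{-1/4}=D^{1/4}Q^{1/2}D^{1/4}\) is positive (this uses \(d_{-}>0\)), so \(T\) has real spectrum. A normal operator with real spectrum is self-adjoint, so equality forces \(Q^{1/2}D^{1/2}=D^{1/2}Q^{1/2}\), i.e.\ \([D,Q]=0\).

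Two small points about your example:
\begin{itemize}
\item Since \(Q\) is fixed by the setup, pick basis vectors \(e_i,e_j\) with \(q_i\ne q_j\) rather than choosing \(q_1\ne q_2\). Such vectors exist because \(q_k>0\) and \(\sum_k q_k<\infty\).
\item Your remark on off-diagonal entries shows \([D^{1/2},Q]\ne0\). To pass to \([D,Q]\ne0\), note that \(D^{1/2}\) is a function of \(D\), or read off the off-diagonal entry \(b(a+c)\ne0\) of \(D\) on \(E\).
\end{itemize}
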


\begin{proof}
The carr\'e du champ of a diffusion with coefficient \(\sigma\) equals
\[
        \Gamma(u,u)=\|\sigma^*\nabla u\|^2.
\]
For \(\sigma=D^{1/2}Q^{1/2}\) we have \(\sigma^*=Q^{1/2}D^{1/2}\). If this
expression is replaced by \(D^{1/2}Q^{1/2}\), one obtains the form
corresponding to a different noise operator \(Q^{1/2}D^{1/2}\). Under
noncommutativity these operators are distinct, and hence so are the associated
forms.
\end{proof}

\begin{remark}[Interpretation]
Propositions~\ref{prop:commutative-benchmark}--\ref{prop:noncomm-correction}
show that the new geometry is not a cosmetic relabeling of the classical case.
In the commutative limit it returns the usual diagonal theory; in the
noncommutative case it selects the unique order of operators compatible with
the generator of the forward SDE and with the backward entropy identity. The
numerical test of Part~II gives a concrete illustration: for
\(N=128\), for the chosen noncommutative cell, \(\min d_i\simeq1.00\),
\(\max d_i\simeq2.10\), whereas the operator constants from Table~1 of the
second part are \(c_{A,N}=1.001\) and \(C_{A,N}=2.231\). Hence the
\(A\)-constants cannot be replaced by the Euclidean spectrum of the tensor
alone.
\end{remark}

\section{Conclusion to Part~I}

Part~I builds the analytic framework: the consistent \(A\)-form, the Galerkin
approximation, Mosco stability, the weak bridge, and the entropy chain rule.
The resulting estimate is an analytic reduction of the stability problem to
the control of the validation error \(\Eval\). The latter is treated in Part~II, together with empirical processes, lower bounds, and applications.

\bibliographystyle{unsrt}
\bibliography{references}

\end{document}